\numberwithin{equation}{section}
\newcommand{\R}{\mathbb{R}}
\newcommand{\N}{{\mathbb N}}
\newcommand{\K}{\mathbb{K}}
\newcommand{\E}[1]{{\mathbb E}\left[#1\right]}
\newcommand{\p}[1]{{\mathbb P}\left(#1\right)}
\newcommand{\I}[1]{{\mathbf 1}_{[#1]}}
\newcommand{\set}[1]{\left( #1 \right)}
\newcommand{\Cprob}[2]{\mathbb{P}\set{\left. #1 \; \right| \; #2}}
\newcommand{\instep}{\hspace*{0.3cm}}
\newtheorem{theorem}{Theorem}[section]
\newtheorem{lem}[theorem]{Lemma}
\newtheorem{prop}[theorem]{Proposition}
\newtheorem{cor}[theorem]{Corollary}
\newtheorem{fact}[theorem]{Fact}
\newcommand\cA{\mathcal A}
\newcommand\cB{\mathcal B}
\newcommand\cG{\mathcal G}
\newcommand\cL{{\mathcal L}}
\newcommand\cM{\mathcal M}
\newcommand\cP{\mathcal P}
\newcommand\cQ{\mathcal Q}
\newcommand\cR{{\mathcal R}}
\newcommand\cS{{\mathcal S}}
\newcommand\cT{{\mathcal T}}
\newcommand{\rG}{\mathrm{G}} 
\newcommand{\rH}{\mathrm{H}}
\newcommand{\rM}{\mathrm{M}}
\newcommand{\rP}{\mathrm{P}} 
\newcommand{\rQ}{\mathrm{Q}} 
\newcommand{\rR}{\mathrm{R}} 
\newcommand{\rS}{\mathrm{S}}
\newcommand{\rV}{\mathrm{V}} 
\newcommand{\rX}{\mathrm{X}} 
\newcommand{\rY}{\mathrm{Y}}
\newcommand{\bM}{\mathbf{M}}
\newcommand{\bQ}{\mathbf{Q}} 
\newcommand{\bR}{\mathbf{R}} 
\newcommand{\bS}{\mathbf{S}}
\newcommand{\bn}{\mathbf{n}}
\newcommand{\refT}[1]{Theorem~\ref{#1}}
\newcommand{\refC}[1]{Corollary~\ref{#1}}
\newcommand{\refL}[1]{Lemma~\ref{#1}}
\newcommand{\refS}[1]{Section~\ref{#1}}
\newcommand{\refP}[1]{Proposition~\ref{#1}}
\newcommand{\refF}[1]{Figure~\ref{#1}}
\newcommand{\refFt}[1]{Fact~\ref{#1}}
\newcommand{\pran}[1]{\left(#1\right)}
\providecommand{\eps}{}
\renewcommand{\eps}{\varepsilon}
\providecommand{\veps}{}
\renewcommand{\veps}{\varepsilon}
\providecommand{\ora}[1]{}
\renewcommand{\ora}[1]{\overrightarrow{#1}}
\newcommand{\diam}{\mathrm{diam}}
\renewcommand{\b}{\ensuremath{\mathrm{b}}}
\renewcommand{\sb}{\ensuremath{\mathrm{sb}}}
\newcommand{\dghp}{\ensuremath{d_{\mathrm{GHP}}}}
\newcommand{\dgh}{\ensuremath{d_{\mathrm{GH}}}}
\newcommand{\dis}{\mbox{dis}}
\newcommand{\eqdist}{\ensuremath{\stackrel{\mathrm{d}}{=}}}
\newcommand{\convdist}{\ensuremath{\stackrel{\mathrm{d}}{\to}}}
\newcommand{\convp}{\ensuremath{\stackrel{\mathrm{p}}{\to}}}
\newcommand\urladdrx[1]{{\urladdr{\def~{{\tiny$\sim$}}#1}}}
\xdef\oclock{\the\count1:0\the\count255}
\xdef\oclock{\the\count1:\the\count255}\fi
\DeclareRobustCommand{\SkipTocEntry}[5]{}
\begin{document}

\title{Joint convergence of random quadrangulations and their cores} 
\author{Louigi Addario-Berry \and Yuting Wen}
\address{Department of Mathematics and Statistics, McGill University, 805 Sherbrooke Street West, 
		Montr\'eal, Qu\'ebec, H3A 0B9, Canada}
\email{louigi.addario@mcgill.ca\\ yutingyw@gmail.com}
\date{\today} 
\urladdrx{http://problab.ca/louigi\\ http://www.math.mcgill.ca/ywen}

\keywords{Brownian map, Gromov-Hausdorff-Prokhorov convergence, singularity analysis, connectivity, quadrangulation.}

\begin{abstract} 
We show that a uniform quadrangulation, its largest $2$-connected block, and its largest simple block jointly converge to the same Brownian map in distribution for the Gromov-Hausdorff-Prokhorov topology. We start by deriving a local limit theorem for the asymptotics of maximal block sizes, extending the result in \cite{BFSS}. The resulting diameter bounds for pendant submaps of random quadrangulations straightforwardly lead to Gromov-Hausdorff convergence. To extend the convergence to the Gromov-Hausdorff-Prokhorov topology, we show that exchangeable ``uniformly asymptotically negligible'' attachments of mass simply yield, in the limit, a deterministic scaling of the mass measure. 
\end{abstract}

\maketitle


\section{Introduction}

Much work has been devoted to understanding the asymptotic properties of large random planar maps. It is conjectured, and known in several cases, that after rescaling the graph distance properly, planar maps from many families converge to the same universal metric space, the Brownian map, in the Gromov-Hausdorff-Prokhorov sense. Recently \citet{LG} and \citet{Mi} independently proved that the Brownian map is the scaling limit of several important families of planar maps, and \citet{ABA} proved that simple triangulations and simple quadrangulations also rescale to the same limit object. 

The aim of this paper is to show that random quadrangulations and their cores jointly converge to the same limit object, even after conditioning on their sizes. Before making this more precise, we state one corollary (Theorem~\ref{thm3}) of our main result: the Brownian map is again the scaling limit of random $2$-connected quadrangulations. 

Throughout the paper, all maps are embedded in the sphere $\mathbb{S}^2$ and are considered up to orientation preserving homeomorphism. A {\em rooted map} is a pair $\rM=(M,uv)$ where $M$ is a map and $uv$ is an oriented edge of $M$. A {\em quadrangulation} is a map in which every face has degree $4$. A quadrangulation is {\em $2$-connected} if the removal of any vertex does not disconnect the map. It is {\em simple} if it contains no multiple edges. Write $\cQ$, $\cR$, and $\cS$ for the set of rooted connected, $2$-connected, and simple quadrangulations, respectively. It is easy to verify that simple quadrangulations are $2$-connected, so $\cS\subset \cR\subset\cQ$. It is technically convenient to view a single edge as a $2$-connected, simple quadrangulation, and we do this.

Given a finite set $\cG$, the notation $G\in_u\cG$ means that $G$ is chosen uniformly at random from $\cG$. Given a finite rooted or unrooted map $G$ write $\mu_G$ for the uniform probability measure on the vertex set $v(G)$, and for $c > 0$, write $cG$ for the measured metric space $(v(G),c\cdot d_G,\mu_G)$, where $d_G$ denotes graph distance. Given a set $\cG$ of maps and $n \in \N$, write $\cG_n= \{G \in \cG: |v(G)|=n\}$. Finally, write $\bM = (\cM, d,\mu)$ for the measured Brownian map. (See \cite{LG} for a definition of $\bM$.)
\begin{theorem}\label{thm3}
Let $\bR_r\in_u\cR_r$, then as $r\to\infty$,
\[
\left(\frac{21}{40r}\right)^{1/4} \bR_r \convdist \bM
\]
in distribution for the Gromov-Hausdorff-Prokhorov topology.
\end{theorem}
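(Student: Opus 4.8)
The plan is to reduce $\bR_r$ to its largest simple block, for which fixed-size convergence is already known \cite{ABA}, by showing the discarded part is negligible both metrically and for the mass measure. Let $\bR_r\in_u\cR_r$, write $\sb(\bR_r)$ for its largest simple block (ties broken towards the root) and set $S_r=|v(\sb(\bR_r))|$. A $2$-connected quadrangulation is obtained from its largest simple block by grafting smaller $2$-connected quadrangulations along its edges, and an $s$-vertex simple quadrangulation has exactly $2(s-2)$ edges whatever its structure; hence for each $r>s$ with $\P(S_r=s)>0$ the number of completions of a given $S\in\cS_s$ to an element of $\cR_r$ having $S$ as its unique largest simple block depends on $S$ only through $s$. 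Thus the law of $\sb(\bR_r)$ given $\{S_r=s\}$ is uniform on $\cS_s$, i.e. equal in law to $\bS_s\in_u\cS_s$, up to a total-variation error that is $o(1)$ in the regime below (the ``unique'' qualifier being vacuous once every grafted piece is small).

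The first key input is a \emph{local} limit theorem for $S_r$, obtained by extending the singularity analysis of \cite{BFSS} to the composition scheme $\cS\subset\cR$: there are constants $\gamma\in(0,1)$ and $\sigma>0$ with $S_r=\gamma r+O_{\P}(r^{2/3})$ and an Airy-type local law, so that $S_r/r\to\gamma$ in probability and, by the same estimates, with high probability every grafted piece has $O(r^{2/3})$ vertices. Feeding this into a union bound, together with stretched-exponential diameter tail bounds for random $2$-connected quadrangulations of prescribed size --- which hold because such a map is the largest $2$-connected block of an ordinary quadrangulation of comparable size, and conditioning on a polynomially likely event leaves a stretched-exponential tail intact --- the maximal diameter of a grafted piece is $o_{\P}(r^{1/4})$. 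Since each grafted piece meets $\sb(\bR_r)$ along a single edge and has small diameter, $\sb(\bR_r)$ with its intrinsic graph metric is an $o_{\P}(r^{1/4})$-net in $\bR_r$ and, up to an $o_{\P}(r^{1/4})$ additive error, isometric to the corresponding subspace; hence $\left(\tfrac{21}{40r}\right)^{1/4}\bR_r$ and $\left(\tfrac{21}{40r}\right)^{1/4}\sb(\bR_r)$ have the same Gromov--Hausdorff limit, if either has one.

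To upgrade to the Gromov--Hausdorff--Prokhorov topology, push $\mu_{\bR_r}$ onto $v(\sb(\bR_r))$ by collapsing each grafted piece onto its attachment, an $o_{\P}(r^{1/4})$ displacement after rescaling, obtaining a measure $\sum_w\tfrac{1+m_w}{r}\delta_w$ with $m_w$ the number of vertices of the grafted piece at edge-slot $w$. Conditionally on $\sb(\bR_r)$ and the multiset of grafted pieces the assignment of pieces to slots is uniform, so $(m_w)_w$ is exchangeable up to reordering, with $\max_w m_w=o_{\P}(r)$. Invoking the key lemma --- that exchangeable, uniformly asymptotically negligible attachments of mass to a Gromov--Hausdorff--Prokhorov convergent sequence of measured spaces contribute in the limit only a deterministic rescaling of the mass measure --- one gets $\sum_w\tfrac{m_w}{r}\delta_w\to(1-\gamma)\mu$ against the limiting measure $\mu$, so the pushed-forward measure converges to the full limiting measure, and $\left(\tfrac{21}{40r}\right)^{1/4}(\bR_r,d,\mu_{\bR_r})$ and $\left(\tfrac{21}{40r}\right)^{1/4}(\sb(\bR_r),d,\mu_{\sb(\bR_r)})$ have the same Gromov--Hausdorff--Prokhorov limit.

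It then remains to show $\left(\tfrac{21}{40r}\right)^{1/4}\sb(\bR_r)\convdist\bM$, which follows softly from the known fixed-size convergence $\left(\tfrac{c_S}{s}\right)^{1/4}\bS_s\convdist\bM$ \cite{ABA}: the scaling constants satisfy $\tfrac98\alpha=\tfrac{21}{40}$ (that is $\alpha=\tfrac7{15}$, with $\alpha$ the density of the largest $2$-connected block of a quadrangulation) and $c_S=\tfrac{21}{40}\gamma$, so $\left(\tfrac{21}{40r}\right)^{1/4}\bS_s=(1+o(1))\left(\tfrac{c_S}{s}\right)^{1/4}\bS_s$ uniformly for $s/r$ in a neighbourhood of $\gamma$; together with tightness this gives $\left(\tfrac{21}{40r}\right)^{1/4}\bS_s\convdist\bM$ uniformly over such $s$, and writing $\E{F\bigl(\left(\tfrac{21}{40r}\right)^{1/4}\sb(\bR_r)\bigr)}=\E{\psi_r(S_r)}$ with $\psi_r(s)=\E{F\bigl(\left(\tfrac{21}{40r}\right)^{1/4}\bS_s\bigr)}$, the uniform convergence $\psi_r(s)\to\E{F(\bM)}$ and the concentration $S_r/r\to\gamma$ give $\E{\psi_r(S_r)}\to\E{F(\bM)}$, hence the claim and the theorem. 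I expect the main obstacle to be the local limit theorem of the second paragraph --- extending \cite{BFSS} not just to pin down the $r^{2/3}$-scale fluctuations of $S_r$ but to control the sizes of \emph{all} grafted pieces tightly enough to run the diameter and measure estimates --- together with isolating the precise hypotheses of the exchangeable-negligible-mass lemma; by contrast, given the fixed-size convergence \cite{ABA}, the passage through the random size $S_r$ is comparatively routine.
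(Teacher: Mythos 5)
Your proposal is correct and follows essentially the same route as the paper, differing only in packaging: the paper first proves the convergence for a \emph{deterministic} simple-block size $s(r)=5r/7+O(r^{2/3})$ (Proposition~\ref{mainprop2}), and then proves Theorem~\ref{thm3} by a short averaging argument over $\sb(\bR_r)$ using the local limit theorem (Proposition~\ref{airyprop2}); you fold these two steps into one, conditioning on $S_r=s$ and averaging at the end. All the key inputs you identify are exactly the ones used: the Tutte-style decomposition of $\bR_r$ into $\sb(\bR_r)$ plus pendant $2$-connected decorations (Proposition~\ref{decom1}); the BFSS-type local limit theorem for $S_r$ around $\alpha_0 r$ with $\alpha_0=5/7$ (so your $\gamma=5/7$ and the scaling check $\tfrac{21}{40}\cdot\tfrac57=\tfrac38=c_S$ is what the paper uses); stretched-exponential size and diameter bounds for the decorations, the latter transferred from Chassaing--Schaeffer by conditioning on a polynomially likely event; the exchangeable-negligible-mass lemma (Lemma~\ref{lem:one_or_two_deterministic} and Corollary~\ref{cor:degbias}) to upgrade GH to GHP; and the fixed-size convergence of simple quadrangulations from \cite{ABA}. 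Two very minor remarks: the uniformity of $\rS(\bR_r)$ on $\cS_s$ given $\{S_r=s\}$ is in fact exact (the decomposition is edge-symmetric on the simple block), so your total-variation hedge is unneeded; and the decoration size bound is $r^{2/3+\eps}$ for arbitrarily small $\eps>0$ rather than $O(r^{2/3})$, though this is immaterial for the diameter and mass estimates you run.
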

A brief overview of the Gromov-Hausdorff-Prokhorov (GHP) distance appears in \refS{sec:ghpintro}. 

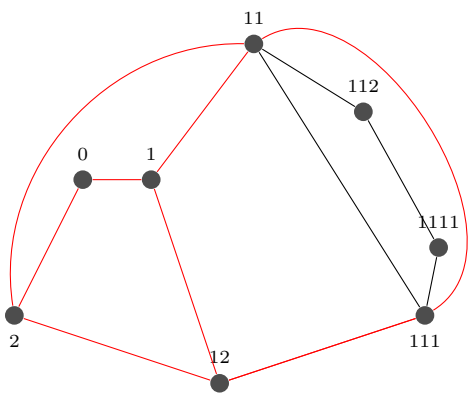
\begin{wrapfigure}[24]{L}{.45\textwidth}
\rule[5pt]{.45\textwidth}{0.5pt}
\centering
\begin{tikzpicture}[scale=0.9]
	\tikzstyle{main}=[circle,inner sep=0pt,minimum size=7pt,fill=black!70];
	
	\node[main] (0) [label={\tiny $0$}] at (-1,0) {};
	\node[main] (1) [label={\tiny $1$}] at (0,0) {};
	\node[main] (2) [ label =below:{\tiny $2$}] at (-2,-2) {};
	\node[main] (11) [label={\tiny $11$}] at (1.5,2) {};
	\node[main] (112) [label={\tiny $112$}] at (3.1,1) {};
	\node[main] (1111) [label={\tiny $1111$}] at (4.2,-1) {};
	\node[main] (111) [label=below:{\tiny $111$}] at (4,-2) {};
	\node[main] (12) [label={\tiny $12$}] at (1,-3) {};
	
	\draw (2) edge [color=red,bend left=50] (11);
    \draw  (2) edge [color=red] (0);	
    \draw  (1) edge [color=red](0);
    \draw (1) edge [color=red](11);
    \draw (11) edge (112);
    \draw (112) edge (1111);
    \draw (11) edge (111);
    \draw (1111) edge (111);
    \draw (1) edge [color=red](12);
    \draw (12) edge[color=red]  (111);	
    \draw (12) edge [color=red] (111);
    \draw (11) edge [color=red,bend left=90] (111);
    \draw (2) edge [color=red] (12);
\end{tikzpicture}
\caption{\small 
$(0, 1)$ is the root edge of $\rM$. For the total order $\prec=\prec_{\rM}$ we have, e.g., $(0,1) \prec (0,2)$, $(2,12) \prec (12,2)$  $\prec (12,111)$ $\prec (111,12)$. Also, of the two copies of edge $(11,111)$, the one succeeding $(11,2)$ in the clockwise order is smaller for $\prec$. The simple block $\rS(\rM)$, highlighted in red, has vertices $0,1,2,11,12,111$.}
\label{fig:rootrule}
\rule[8pt]{.45\textwidth}{0.5pt} 
\end{wrapfigure}

To state our main results, a little more terminology is needed. Given a rooted map $\rM=(M,uv)$, we may define a canonical total order $<_\rM$ on $v(M)$ as follows. List the vertices of $\rM$ as $u_1=u,u_2=v,\ldots,u_{|v(M)|}$ according to their order of discovery by a breadth-first search which starts from the root edge $uv$ and uses the clockwise order of edges around each vertex starting from the explored edge to determine exploration priority. (See \cite{E} for a definition of breadth-first search.) We also define a total order $\prec_{\rM}$ on the set of oriented edges of $\rM$ as follows. Let $u_iu_j\prec_{\rM} u_{i'}u_{j'}$ precisely if either (a) $u_i$ was discovered before $u_{i'}$ or (b) $i=i'$ and $u_iu_j$ has higher priority than $u_iu_{j'}$. 

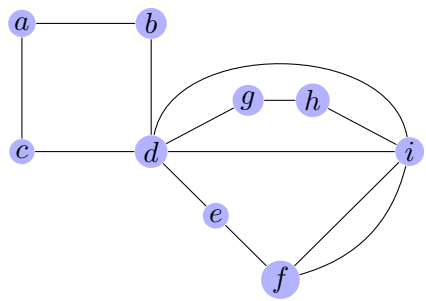
\begin{wrapfigure}[17]{R}{.44\textwidth}
\rule[5pt]{.44\textwidth}{0.5pt}
\centering
\begin{tikzpicture}[scale=0.85]
	\tikzstyle{main}=[circle,inner sep=1pt,minimum size=6pt,fill=blue!30];

	\node[main] (a) at (-2,2) {$a$} ;
	\node[main] (b) at (0,2) {$b$};
	\node[main] (c) at (-2,0) {$c$};
	\node[main] (d) at (0,0) {$d$};
	\node[main] (e) at (4,0) {$i$};
	\node[main] (g) at (1.5,0.8) {$g$};
	\node[main] (h) at (2.5,0.8) {$h$};
	\node[main] (f) at (2,-2) {$f$};
	\node[main] (i) at (1,-1){$e$};

    \draw  (a) edge (b);	
    \draw  (b) edge (d);	
    \draw  (a) edge (c);	
    \draw  (c) edge (d);	
    \draw (d) edge [bend left=80] (e);
    \draw (d) edge (g);
    \draw (g) to (h) to (e);
    \draw (d) edge (e);
    \draw (d) to (i) to (f);
    \draw (f) edge (e);
    \draw (e) edge [bend left=30] (f);	
\end{tikzpicture}
\caption{\small The $2$-connected blocks of $\rM$ are $M[\{a,b,c,d\}]^\circ$ and $M[\{d,e,f,g,h,i\}]^\circ$. The simple blocks of $M$ are $M[\{a,b,c,d,e,f,i\}]^\circ$ and $M[\{d,g,h,i\}]^\circ$.}
\label{fig:blocks}  
\rule[8pt]{.44\textwidth}{0.5pt}
\end{wrapfigure}

Fix a bipartite map $\rM=(M,uv)$.
A cycle $C$ in a map $\rM$ is {\em nearly facial} if at least one connected component of $\mathbb{S}^2\setminus C$ contains no vertices of $M$ (it may contain edges). We say $\rM$ is {\em nearly simple} if every cycle in $\rM$ with length two is nearly facial. 
Write $\rM^{\circ}=(M^{\circ},uv)$ for the map obtained by collapsing each nearly facial $2$-cycle into an edge. (This is a slight abuse of notation as the edge $uv\in e(M)$ may be collapsed with other edges in forming $M^{\circ}$, but the meaning should be clear.) 
Note that $\rM$ is nearly simple precisely if $\rM^{\circ}$ is simple -- in this case we call $\rM^{\circ}$ the {\em simple nerve} of $\rM$. 

For $A \subset v(M)$, write $M[A]$ for the submap of $M$ induced by $A$. For any edge $e\in e(M)$ with endpoints $x$ and $y$ let $B_e\subset v(M)$ be maximal subject to the constraints that $\{x,y\}\subset B_e$, and that $M[B_e]$ is $2$-connected. We call $M[B_e]^\circ$ a {\em $2$-connected block} of $\rM$. In particular, write $\rR^\bullet=\rR^{\bullet}(\rM)=(M[B_{uv}]^{\circ},uv)$ and call $\rR^\bullet$ the {\em $2$-connected root block} of $\rM$. Our choice to collapse nearly-facial 2-cycles renders this different from the standard graph theoretic definition of a 2-connected block. We make this choice as it simplifies upcoming counting arguments.

Next, for any edge $e\in e(M)$ with endpoints $x$ and $y$, consider the set $S=\{B \subset v(M): \{x,y\} \subset B, ~M[B]\mbox{ is nearly simple}\}$. Let $S' = \{B' \in S: B'\mbox{ is maximal}\}$, where maximal is with respect to the inclusion relation on $v(M)$. Then 
define $B'_e \subset v(M)$ to be the lexicographically minimal element of $S'$ with respect to the total order $<_{\rM}$. We call $M[B'_e]^{\circ}$ a {\em simple block} of $\rM$ or, more specifically, the simple block containing edge $e$. We also write $\rS^\bullet=\rS^{\bullet}(\rM)=(M[B'_{uv}]^{\circ},uv)$ and call $\rS^\bullet$ the {\em simple root block} of $\rM$. 

Write $\rR(\rM)$ (resp.~$\rS(\rM)$) for the largest $2$-connected (resp.~simple) block of $\rM$, rooted at its $\prec_{\rM}$-minimal edge, and write $\b(\rM)=|v(\rR(\rM))|$ and $\sb(\rM)=|v(\rS(\rM))|$. If there are multiple $2$-connected blocks with size $\b(\rM)$, among these blocks we take $\rR(\rM)$ to be the one whose root edge $u_iu_j$ is $\prec_{\rM}$-minimal, and use the same convention for $\rS(\rM)$. We call $\rR(\rM)$ and $\rS(\rM)$ the $2$-connected and simple cores of $\rM$, respectively.

The next theorem states that a uniform quadrangulation, its largest $2$-connected block, and its largest simple block jointly converge to the same Brownian map. (Note that the definition of $\bR_q$ in the coming theorem is different from that in Theorem~\ref{thm3}. We recycle some notation to keep the sub- and superscripts from becoming too cumbersome; we will always remind the reader when there is a possibility of ambiguity or confusion.) 

\begin{theorem}\label{thm1}
Let $\bQ_q\in_u \cQ_q$ and write $\bR_q=\rR(\bQ_q)$, $\bS_q=\rS(\bQ_q)$. Then as $q\to\infty$, 
\begin{align*}
\left(\left(\frac{9}{8q}\right)^{1/4} \bQ_q~,~\left(\frac{9}{8 q}\right)^{1/4}  \bR_q~,~\left(\frac{9}{8 q}\right)^{1/4}  \bS_q\right) \convdist \left(\bM,\bM,\bM\right)\, 
\end{align*}
in distribution for the Gromov-Hausdorff-Prokhorov topology.
\end{theorem}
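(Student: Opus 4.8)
The plan is to establish Theorem~\ref{thm1} in two stages, first obtaining Gromov--Hausdorff convergence and then upgrading to the GHP topology by controlling the mass measure. The backbone is the observation that $\bQ_q$ is built from its core $\bR_q$ (and in turn $\bR_q$ from $\bS_q$) by attaching ``pendant'' submaps along the edges of the core, and that these attachments are, with high probability, uniformly small in diameter. Concretely, I would first prove a local limit theorem for the joint law of $(\b(\bQ_q),\sb(\bQ_q))$ -- sharpening the Banderier--Flajolet--Schaeffer--Soria asymptotics cited as \cite{BFSS} via singularity analysis of the relevant generating functions -- to deduce that $\b(\bQ_q)$ and $\sb(\bQ_q)$ are each $(\alpha + o_{\mathbb{P}}(1))q$ for explicit constants $\alpha$, and moreover that conditionally on their sizes, $\bR_q$ and $\bS_q$ are uniform in $\cR_{\b(\bQ_q)}$ and $\cS_{\sb(\bQ_q)}$ respectively. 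This is what lets us transfer known scaling-limit results: $\cQ_q$ rescales to $\bM$ by \cite{LG,Mi}, and $\cS_n$ rescales to $\bM$ by \cite{ABA}; combined with the size concentration and the constant-matching in the statement, $(9/8q)^{1/4}\bQ_q$, $(9/8q)^{1/4}\bR_q$, $(9/8q)^{1/4}\bS_q$ each converge to $\bM$ marginally.

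The heart of the Gromov--Hausdorff part is to show the three spaces are \emph{close to each other}, not merely that they have the same limit. For this I would show that the Hausdorff distance (after rescaling) between $v(\bQ_q)$ and $v(\bR_q)$, and between $v(\bR_q)$ and $v(\bS_q)$, inside a common metric space tends to $0$ in probability. The natural coupling embeds $\bS_q \subset \bR_q \subset \bQ_q$ as vertex subsets; the distortion of these inclusions is governed by the maximal diameter of a pendant submap hanging off the core. Using the local limit theorem to describe the sizes of these pendant pieces -- there are $O(q)$ of them with a sub-exponential size distribution -- together with standard diameter tail bounds for random quadrangulations with a boundary (à la Chassaing--Schaeffer / Le Gall moment estimates), I would show the largest pendant diameter is $o(q^{1/4})$ with high probability, so all three rescaled inclusions are $o_{\mathbb{P}}(1)$-isometries. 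This yields joint GH convergence to $(\bM,\bM,\bM)$.

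The main obstacle -- and the reason the GHP strengthening is not automatic -- is the mass measure: even if $v(\bR_q)$ is Hausdorff-close to $v(\bQ_q)$, the uniform measure $\mu_{\bQ_q}$ puts mass on \emph{all} $q$ vertices, while $\mu_{\bR_q}$ is supported on the $\alpha q$ core vertices, so one must check that redistributing the ``extra'' mass of each small pendant submap onto its attachment point in the core perturbs the measure negligibly in the Prokhorov metric. The clean way to phrase this is the statement advertised in the abstract: an exchangeable family of ``uniformly asymptotically negligible'' mass attachments converges, after rescaling, to a deterministic constant multiple of the base mass measure. I would isolate this as a lemma -- given a measured metric space $X_n \to (\mathcal{M},d,\mu)$ and a collection of masses $\{m_i\}$ attached at exchangeably-chosen points with $\max_i m_i / \sum_i m_i \to 0$, the reweighted space converges to $(\mathcal{M},d,c\mu)$ -- proved by a second-moment / law-of-large-numbers argument showing the random reweighted measure concentrates around its (deterministic, by exchangeability) mean, which is a scaling of $\mu$. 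Applying this lemma along the chain $\bS_q \leadsto \bR_q \leadsto \bQ_q$, and combining with the GH convergence and the fact that the scaling constants $(9/8q)^{1/4}$ are chosen precisely to make all three limits literally equal to $\bM$ (rather than to rescaled copies), completes the proof. I expect the delicate points to be (i) the uniformity of the pendant diameter bound over all $O(q)$ pieces simultaneously, requiring a union bound against exponential tail estimates, and (ii) verifying the ``uniformly asymptotically negligible'' hypothesis for the pendant masses, which again follows from the local limit theorem controlling the largest pendant size as $o_{\mathbb{P}}(q)$.
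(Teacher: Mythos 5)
Your proposal captures essentially the same strategy as the paper: a local limit theorem for the joint block sizes via singularity analysis (Theorem~\ref{thm:blocksizes}), diameter tail bounds for pendant submaps giving Gromov--Hausdorff closeness, and an exchangeability-based concentration argument for the reweighted mass measure to upgrade to GHP. The paper organizes this differently, first proving the fixed-size version Theorem~\ref{thm2} (where $\b(\bQ_q)$ and $\sb(\bQ_q)$ are conditioned to lie in $O(q^{2/3})$ windows around $7q/15$ and $q/3$) and then deriving Theorem~\ref{thm1} by averaging over block sizes using the local limit theorem; your plan merges these two steps, which is an organizational rather than mathematical difference. Two details your sketch leaves implicit, both handled in the paper: the nesting $\bS_q \subset \bR_q$ that you invoke can fail on a small-probability event where a large simple block lies outside the largest $2$-connected block, which the paper controls via Fact~\ref{blockfact} and Proposition~\ref{secondlargest2}; and to get uniformity over test sets for the Prokhorov estimate, the paper pairs the exchangeability concentration (a Hoeffding-type bound derived from Aldous's comparison of sampling with and without replacement) with a finite $\varepsilon$-ball covering of $\bM$, whereas the bare ``second-moment / law-of-large-numbers'' phrasing you use would need to be supplemented by the same covering argument to deliver that uniformity.
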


The convergence of the first coordinate in \refT{thm1} was proved independently by \citet{LG} and by \citet{Mi}. The convergence of the third coordinate on its own is implied by a result by \citet{ABA}, who show that if $\bS_q$ is a uniform simple quadrangulation for all $q$, then $(3/(8|v(\bS_q)|))^{1/4} \bS_q \convdist \bM$. It is known \cite{GW,BFSS} that $|v(\bS_q)|/q \to 1/3$ in probability, so in the third coordinate the scaling factor $(9/(8q))^{1/4}$ may be replaced by $(3/(8 |v(\bS_q)|))^{1/4}$, and the convergence then follows from the result of \cite{ABA}. Similarly, the convergence of the second coordinate on its own can be deduced from \refT{thm3}. 

\refT{thm1} and \refT{thm3} both follow from a stronger ``local invariance principle'', in which the sizes of the largest $2$-connected block and largest simple block are fixed rather than random. Given integers $q \ge r \ge s \ge 1$, let 
\begin{align*}
\cQ_{q,r,s} & = \{Q \in \cQ_q: \b(Q)=r,\sb(Q)=s\}\, ,\\
\cR_{r,s} & = \{Q \in \cR_r: \sb(Q)=s\}\, . 
\end{align*}

\begin{theorem}\label{thm2}
Let $(r(q):q\in\N)$ and $(s(q):q \in\N)$ be such that $r(q) = 7q/15  + O\left(q^{2/3}\right)$ and $s(q)=q/3+O(q^{2/3})$ as $q\to\infty$. Let $\bQ_q\in_u \cQ_{q,r(q),s(q)}$ and write $\bR_q=\rR(\bQ_q)$, $\bS_q=\rS(\bQ_q)$. Then as $q\to\infty$, 
\[
\left(\left(\frac{9}{8q}\right)^{1/4} \bQ_q~,~\left(\frac{9}{8 q}\right)^{1/4}  \bR_q~,~\left(\frac{9}{8 q}\right)^{1/4}  \bS_q\right) \convdist \left(\bM,\bM,\bM\right)\, \]
in distribution for the Gromov-Hausdorff-Prokhorov topology.
\end{theorem}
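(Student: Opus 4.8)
The plan is to deduce the convergence of all three coordinates from the single known input that a uniform simple quadrangulation rescales to $\bM$ (\citet{ABA}), by controlling the pendant submaps that the block decomposition attaches to the cores, and then passing from the Gromov--Hausdorff to the Gromov--Hausdorff--Prokhorov topology via an argument tailored to exchangeable, asymptotically negligible attachments of mass. First I would set up the edge-rooted block decomposition: a connected quadrangulation is its $2$-connected root block with a quadrangulation attached at each slot, and a $2$-connected quadrangulation is its simple root block with a $2$-connected quadrangulation attached at each slot. Since the number of slots of a (simple or $2$-connected) quadrangulation depends only on its vertex count, the associated generating functions factor, and a singularity analysis in the spirit of \cite{BFSS} — whose output is a local limit theorem for $(\b(\bQ_q),\sb(\bQ_q))$ under $\bQ_q\in_u\cQ_q$, concentrated at $(7q/15,q/3)$ with fluctuations of order $q^{2/3}$ — also yields the structural facts I need: $\cQ_{q,r(q),s(q)}\neq\emptyset$ for $q$ large; under $\bQ_q\in_u\cQ_{q,r(q),s(q)}$, with probability $1-o(1)$ there is a unique $2$-connected block of size $r(q)$, namely $\bR_q$, and a unique simple block of size $s(q)$, namely $\bS_q$, with $v(\bS_q)\subseteq v(\bR_q)$ and every other block of size at most $q^{2/3}(\log q)^2$; and, conditionally on the multiset of pendant sizes, the pendant pieces are independent uniform (resp.\ $2$-connected) quadrangulations of their sizes, assigned to the slots exchangeably, with $\bS_q$ a uniform simple quadrangulation on $s(q)$ vertices up to its root (which is immaterial for GHP purposes).

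Because distinct blocks overlap only in a shared edge, and collapsing nearly-facial $2$-cycles preserves graph distance, $\bS_q\hookrightarrow\bR_q\hookrightarrow\bQ_q$ isometrically. Standard estimates give that a uniform quadrangulation on $m$ vertices has diameter $O(m^{1/4}\,\mathrm{polylog}\,m)$ off an event of superpolynomially small probability, so a union bound over the $O(q)$ pendant pieces, each of size at most $q^{2/3}(\log q)^2$ by the previous paragraph, shows that with probability $1-o(1)$ every pendant piece has diameter $o(q^{1/4})$. Hence $v(\bS_q)$ is $o(q^{1/4})$-dense in $\bR_q$ and $v(\bR_q)$ is $o(q^{1/4})$-dense in $\bQ_q$, so after rescaling by $(9/(8q))^{1/4}$ the three spaces are pairwise at Gromov--Hausdorff distance tending to $0$ in probability. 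Since $s(q)/q\to 1/3$, the result of \cite{ABA} gives $(9/(8q))^{1/4}\bS_q\convdist\bM$ for the Gromov--Hausdorff topology, and therefore all three rescaled spaces converge, jointly, to $(\bM,\bM,\bM)$ with identical coordinates.

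The mass measures are not mutually close, since pendant vertices form a constant fraction of $v(\bQ_q)$; the content is that the pendant mass is reallocated deterministically. I would establish a lemma roughly of the form: if $(X_n,d_n,\nu_n)\to(X,d,\nu)$ in GHP, if $\Sigma_n\subseteq X_n$ has uniform measure converging to $\nu$, and if one enlarges $X_n$ by attaching to each $\sigma\in\Sigma_n$ an extra mass $w^{(n)}_\sigma\ge 0$ supported within $\delta_n\to 0$ of $\sigma$, where $(w^{(n)}_\sigma)_{\sigma\in\Sigma_n}$ is exchangeable with $\max_\sigma w^{(n)}_\sigma\to 0$ and $\sum_\sigma w^{(n)}_\sigma\to c\in[0,\infty)$, then the enlarged renormalized measured space still converges to $(X,d,\nu)$ — the proof being a weak law of large numbers showing that the added mass in a fixed region is asymptotically $c$ times its $\nu$-measure. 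Applying this with $X_n=(9/(8q))^{1/4}\bS_q$, $\Sigma_n$ its slots (which are $\nu$-equidistributed, lying between neighbouring vertices), and $w^{(n)}_\sigma$ the rescaled size of the pendant attached at $\sigma$ — exchangeable by the first paragraph, each tending to $0$ in probability by the second, with constant total mass — yields $(9/(8q))^{1/4}\bR_q\convdist\bM$ in GHP; a second application, with the pendants of $\bQ_q$ on the slots of $\bR_q$, gives the same for $\bQ_q$, and joint convergence to $(\bM,\bM,\bM)$ follows as before.

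The first two paragraphs are, respectively, a substantial-but-expected analytic-combinatorics computation and a routine combination of block structure with known diameter bounds. The heart of the matter is the third paragraph: isolating the correct hypothesis of ``exchangeable, uniformly asymptotically negligible attachment'' and proving that under it the macroscopic redistribution of mass is deterministic — a law of large numbers uniform over test functions on the limit space — together with the combinatorial verification that the block decomposition of quadrangulations genuinely presents the pendant masses in this exchangeable, conditionally-uniform form.
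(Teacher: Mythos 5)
Your proposal follows essentially the same route as the paper: edge-rooted block decomposition, BFSS-style singularity analysis yielding a joint local limit theorem for $(\b(\bQ_q),\sb(\bQ_q))$, diameter tail bounds for the pendant pieces to obtain Gromov--Hausdorff convergence, and a concentration estimate for exchangeable, uniformly asymptotically negligible pendant masses (the content of the paper's Section~\ref{sec:exch_measures}) to upgrade to Gromov--Hausdorff--Prokhorov. The paper's execution differs only in bookkeeping --- it passes through the degree-biased vertex measure as an explicit intermediary between the edge-attachment measure and the uniform measure (Lemma~\ref{lem:unifmeas}), isolates the pair $(\bR,\bS)$ as Proposition~\ref{mainprop2} and then repeats the projection argument for $(\bQ,\bR)$, and invokes a maximum-degree bound (Lemma~\ref{lem:deg}) to verify the negligibility hypothesis in the second application, where several pendants may share a slot --- but these are precisely the details your sketch leaves to be filled in.
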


We provide an outline of the proof of \refT{thm2} (our main result) in \refS{sec:sketch}. 

Now and for the remainder of the paper, fix $C > 0$ and let $(r(q):q\in\N)$ and $(s(q):q \in\N)$ be such that $|r(q)-7q/15| < C q^{2/3}$ and $|s(q)-5q/7| < C q^{2/3}$ for all $q$ sufficiently large. The scaling of $r(q)$ and $s(q)$ in \refT{thm2} is explained by the following local limit theorem for the asymptotics of maximal block sizes. 
\begin{theorem}\label{thm:blocksizes} 
Let $\bQ_q \in_u \cQ_{q}$, and write $\delta_r(q) =\frac{r(q)-7q/15}{q^{2/3}}$, $\delta_s(q)=\frac{s(r(q))-5r(q)/7}{r(q)^{2/3}}$. Then 
\[
\p{\b(\bQ_q)=r(q),\sb(\bQ_q)=s(r(q))} =  \frac {\beta\cA\left(\beta \delta_s(q)\right)}{r(q)^{2/3}}\frac {\beta' \cA\left(\beta' \delta_r(q)\right)}{q^{2/3}}(1+o(1))~,
\]
where $\beta$ and $\beta'$ are positive constants given in Propositions~\ref{schema2} and~\ref{schema1} respectively, $\cA: \R \to [0,1]$ is a density. 
\end{theorem}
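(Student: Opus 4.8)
The plan is to factor the joint event into two nested "maximal block" events and apply a singularity-analysis-based local limit theorem to each factor separately, conditioning on the intermediate size. Concretely, the event $\{\b(\bQ_q)=r(q),\ \sb(\bQ_q)=s(r(q))\}$ decomposes as $\{\b(\bQ_q)=r(q)\}\cap\{\sb(\bQ_q)=s(r(q))\}$, and by the block-decomposition structure (pendant submaps attached to a $2$-connected core, and the simple nerve structure inside a $2$-connected map), conditionally on $\b(\bQ_q)=r(q)$ the $2$-connected core $\bR_q$ is a uniform element of $\cR_{r(q)}$ up to the choice of rooting conventions already fixed in the paper. Hence
\[
\p{\b(\bQ_q)=r(q),\ \sb(\bQ_q)=s(r(q))}
= \p{\b(\bQ_q)=r(q)}\cdot \p{\sb(\bR)=s(r(q))},
\]
where $\bR\in_u\cR_{r(q)}$. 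Each factor is then handled by the same machinery, applied to the pair (outer class, core class): $(\cQ,\cR)$ for the first, $(\cR,\cS)$ for the second. This reduces the theorem to Propositions~\ref{schema1} and~\ref{schema2}, which I expect to assert precisely that for each such composition schema the maximal-core-size distribution, suitably centred and scaled at exponent $2/3$, converges pointwise to the stated density $\cA$ times the appropriate constant $\beta$ or $\beta'$.

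The core of each factor estimate is a quasi-power / supercritical-composition analysis in the style of \cite{BFSS}. I would set up generating functions $Q(z)$, $R(z)$, $S(z)$ for the three families (with $z$ marking vertices or edges as convenient), together with the composition relations expressing a connected quadrangulation as a $2$-connected "backbone" with pendant quadrangulations substituted at its edges, and likewise a $2$-connected quadrangulation in terms of its simple nerve. Singularity analysis gives the dominant singularity of each series and, crucially, the relation between the singularities: the relevant composition is \emph{supercritical}, so the largest block has linear size with a Gaussian-type central limit behaviour on the $\sqrt{r}$ scale but, at the finer $r^{2/3}$ scale probed here, the point probabilities $\p{\b=r}$ are governed by the local behaviour of a stable-type density. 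The standard BFSS conclusion is that the maximal block size $\b(\bQ_q)$ satisfies a local limit theorem with a limiting density of Airy/stable type; here that density is denoted $\cA$, and the constants $\beta,\beta'$ and the centrings $7q/15$, $5r/7$ come from the first two Taylor coefficients of the relevant functional relations at the common singularity. The main new point relative to \cite{BFSS} is the \emph{joint} statement, but the nesting argument above reduces it to two independent applications, so no genuinely new probabilistic input is needed beyond the conditional-uniformity observation.

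The step I expect to be the main obstacle is verifying the conditional-uniformity claim cleanly given the paper's nonstandard conventions: because nearly-facial $2$-cycles are collapsed when forming blocks, and because $\rR(\rM)$ and $\rS(\rM)$ are selected among maximum-size blocks by a $\prec_{\rM}$-minimality rule, one must check that summing over all ways to re-attach pendant submaps (and to expand the simple nerve back to a general $2$-connected map) does not bias the conditional law of the core, and that the counting identities feeding the generating functions correctly account for the collapse operation. Concretely, I would prove a bijective/enumerative lemma stating $|\cQ_{q,r}| = |\cR_r|\cdot(\text{number of edge-indexed forests of pendant quadrangulations of total size }q-r)$ with the appropriate combinatorial factor, and similarly for $|\cR_{r,s}|$ in terms of $|\cS_s|$; once these are in hand the probability factorizes exactly and the two local limit theorems of Propositions~\ref{schema1} and~\ref{schema2} finish the proof. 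The remaining routine work is checking that $\delta_r(q)$ and $\delta_s(q)$ stay in a compact range (guaranteed by the standing hypotheses $|r(q)-7q/15|<Cq^{2/3}$, $|s(q)-5q/7|<Cq^{2/3}$) so that the pointwise convergence of the rescaled densities to $\beta\cA(\beta\,\cdot)$ and $\beta'\cA(\beta'\,\cdot)$ applies uniformly, and that the $o(1)$ errors from the two applications combine multiplicatively.
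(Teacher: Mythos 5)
Your overall plan is right and matches the paper's: condition on $\b(\bQ_q)=r(q)$, use that the conditional law of the core $\bR_q=\rR(\bQ_q)$ is uniform on $\cR_{r(q)}$, and then apply two local limit theorems of BFSS type (in the paper these are Propositions~\ref{airyprop1} and~\ref{airyprop2}, derived from the schemata in Propositions~\ref{schema1} and~\ref{schema2}). The issue you flag at the end about re-attachment not biasing the core's conditional law is indeed fine and requires no work beyond Proposition~\ref{decom2}.

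However, there is a genuine gap in the central factorization
\[
\p{\b(\bQ_q)=r(q),\ \sb(\bQ_q)=s(r(q))} = \p{\b(\bQ_q)=r(q)}\cdot \p{\sb(\bR)=s(r(q))}.
\]
This is not an identity. What conditional uniformity of the core actually gives is
\[
\p{\b(\bQ_q)=r(q),\ \sb(\rR(\bQ_q))=s(r(q))} = \p{\b(\bQ_q)=r(q)}\cdot \p{\sb(\bR)=s(r(q))},
\]
i.e.\ the second factor concerns the largest simple block \emph{of the core}. The theorem's statement involves $\sb(\bQ_q)$, the largest simple block of the whole quadrangulation, and this need not coincide with $\sb(\rR(\bQ_q))$: the largest simple block of $\bQ_q$ may lie inside a pendant submap rather than inside the $2$-connected core. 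The paper closes this gap explicitly. Fact~\ref{blockfact} observes that if $\rS(\bQ_q)\ne\rS(\rR(\bQ_q))$ then $L'(\bQ_q)\ge\sb(\rR(\bQ_q))$ (some pendant submap contains a large $2$-connected block), and Proposition~\ref{secondlargest2} gives a stretched-exponential bound $\p{L'(\bQ_q)\ge s(r(q))\mid \b(\bQ_q)=r(q)}=O(\exp(-c_2 q^{c_3}))$; since $s(r(q))\sim q/3 \gg q^{2/3+\eps}$, this error is $o(q^{-2})$, negligible against the $\Theta(q^{-4/3})$ main term. Your proposal omits this comparison entirely, so as written it proves a different statement (the one about $\sb(\rR(\bQ_q))$), and an argument of this kind must be added to recover the theorem.
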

Here $o(1)$ denotes a function tending to zero whose decay may depend on $C$, but we omit this dependence from the notation.
We prove \refT{thm:blocksizes} using the machinery developed by \citet{BFSS}, based on singularity analysis of generating functions, in \refS{sec:schema}. 
\refT{thm1} follows from \refT{thm2}, \refT{thm:blocksizes}, and an easy averaging argument. We similarly deduce \refT{thm3} by averaging over the second coordinate in the next proposition. 
\begin{prop}\label{mainprop2}
Let $\bR_r \in_u \cR_{r,s(r)}$ and write $\bS_r=\rS(\bR_r)$. Then as $r \to \infty$, 
\[
\left(\left(\frac{21}{40 r}\right)^{1/4}  \bR_r~,~\left(\frac{21}{40 r}\right)^{1/4}  \bS_r\right) \convdist \left(\bM,\bM\right)\, \]
in distribution for the Gromov-Hausdorff-Prokhorov topology.
\end{prop}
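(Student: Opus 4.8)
The plan is to derive the proposition from \refT{thm2} by realizing $\bR_r$ as the largest $2$-connected block of a sufficiently large uniform quadrangulation. First, fix the sequence $r$ (so $s(r) = 5r/7 + O(r^{2/3})$ and, implicitly, $\cR_{r,s(r)}\neq\emptyset$) and choose $q = q(r)$ with $q = 15r/7 + O(r^{2/3})$, $\cQ_{q,r,s(r)}\neq\emptyset$, $q(\cdot)$ injective and $q(r)\to\infty$. Such $q$ exists: attaching pendant submaps of various small sizes to any fixed element of $\cR_{r,s(r)}$ produces quadrangulations of every total size exceeding $r$, in particular of size $\lceil 15r/7\rceil$, without creating a larger $2$-connected block or a larger simple block. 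With this choice $7q/15 = r + O(r^{2/3})$ and $q/3 = s(r) + O(r^{2/3})$, so \refT{thm2} is available along the sequence $q = q(r)$; moreover $(9/(8q))^{1/4} = (21/(40r))^{1/4}(1+o(1))$, and since the limit $\bM$ is almost surely compact these two scalings may be used interchangeably when passing to the limit.

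The heart of the matter is a distributional identity: for $\bQ_q\in_u\cQ_{q,r,s(r)}$ with $q = q(r)$, the joint law of $(\rR(\bQ_q),\rS(\bQ_q))$, viewed as a pair of measured metric spaces, is within $o(1)$ in total variation of that of $(\bR_r,\rS(\bR_r))$. This relies on the block decomposition of quadrangulations: the $2$-connected blocks form a tree, the simple blocks of $\bQ_q$ are precisely the simple blocks of its $2$-connected blocks, and a $2$-connected block on $k$ vertices offers a number of pendant-attachment sites depending only on $k$. Let $E_q$ be the event that $\bQ_q$ has a unique $2$-connected block of maximal size and that its largest simple block lies inside that block. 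The condensation estimates produced by the singularity analysis of \refS{sec:schema} --- there is a unique $2$-connected block of linear size, all others are sublinear, and in particular none has size $\geq s(r)$ --- yield $\p{E_q} = 1 - o(1)$; on $E_q$ one has $\rS(\bQ_q) = \rS(\rR(\bQ_q))$, and a ``swap the giant block'' argument shows that, conditionally on $E_q$ and on the block sizes, $\rR(\bQ_q)$ is uniform over $\cR_{r,s(r)}$ up to its rooting, which is immaterial for the GHP topology. Pairing $E_q$ with the matching full-probability event for $\bR_r$ then gives the total-variation bound.

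Granting this, \refT{thm2} applied with quadrangulation size $q = q(r)$, followed by projection onto the last two coordinates (a continuous operation), gives $\bigl((9/(8q))^{1/4}\rR(\bQ_q),(9/(8q))^{1/4}\rS(\bQ_q)\bigr)\convdist(\bM,\bM)$; substituting $(\bR_r,\rS(\bR_r))$ via the identity above, and replacing the scaling by $(21/(40r))^{1/4}$ as in the first paragraph, yields the proposition. I expect the main obstacle to be the second paragraph: although ``the giant block is conditionally uniform'' is morally routine, making it precise requires care with the canonical rerooting of blocks and with tie-breaking among blocks of equal size (all negligible events), and the condensation input must be drawn from \refS{sec:schema} in exactly the form ``with high probability there is no second $2$-connected block of linear size.''
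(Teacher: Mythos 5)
Your proposal is circular: you derive Proposition~\ref{mainprop2} from Theorem~\ref{thm2}, but in the paper the logical dependency runs the other way. In Section~\ref{sec:thmpf} the proof of Theorem~\ref{thm2} first reduces the joint law of $(\bR_q,\bS_q)$ (for $\bQ_q\in_u\cQ_{q,r(q),s(q)}$) to that of a uniform element of $\cR_{r(q),s(q)}$ and its largest simple block, and then explicitly invokes Proposition~\ref{mainprop2} to obtain the convergence in (\ref{convresize}). This is also flagged in the proof sketch (``We conclude the proof sketch by explaining how we strengthen \refP{mainprop2} to prove \refT{thm2}''). So the ``total-variation identification'' step you identify as the ``heart of the matter'' is in fact the (relatively easy) reduction the paper uses \emph{in the opposite direction}, from Proposition~\ref{mainprop2} to Theorem~\ref{thm2}; it cannot be used to prove Proposition~\ref{mainprop2} without assuming what you are trying to prove.

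The paper's actual route to Proposition~\ref{mainprop2} is bottom-up and takes as external input only the convergence of uniform simple quadrangulations to $\bM$ (from~\cite{ABA}). It decomposes $\bR_r$ into $\bS_r$ and pendant decorations via Proposition~\ref{decom1}; shows (Propositions~\ref{secondlargest} and~\ref{diam}) that the largest decoration has sublinear size and diameter $o(r^{1/4})$, giving GH convergence (\refP{ghprop2}); and then upgrades GH to GHP by showing, via the exchangeable-decorations concentration estimate of Lemma~\ref{lem:one_or_two_deterministic} and Corollary~\ref{cor:degbias}, that projecting the pendant mass onto the attachment vertices of $\bS_r$ yields a measure close in Prokhorov distance to $\mu_{\bS_r}$. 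This Prokhorov control is the genuinely new content of the proposition --- it is exactly what rules out the mass-concentration pathology of Figure~\ref{fig:pointmass} --- and your outline does not engage with it at all. If you want a non-circular proof, you must establish this mass-redistribution fact directly from the block decomposition of $\bR_r$, not import it from Theorem~\ref{thm2}.
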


\vspace*{3mm}

\noindent {\bf Remarks.} 
\begin{enumerate}
\item The proof of \refP{mainprop2}, given in \refS{sec:mass_proj}, uses the convergence of simple quadrangulations, proved in \cite{ABA}, to deduce convergence of $2$-connected quadrangulations, as a stepping stone to proving the joint convergence of \refT{thm2}. 
The results of \cite{ABA} in turn use the ``rerooting invariance trick'' introduced by \citet{LG}, 
together with the convergence of uniform quadrangulations to the Brownian map \cite{LG,Mi}, to deduce convergence for uniform simple quadrangulations. We mention this to emphasize that the results of this paper do not constitute an independent proof of convergence for uniform quadrangulations. 
\item In \cite{ABA} it is also shown that simple triangulations converge to the Brownian map. Using this, the arguments of the current paper could be modified to show joint convergence of uniformly random triangulations and their largest loopless and simple blocks. 
\end{enumerate}
Before sketching our proof, we first describe the combinatorial relations between $\rQ$, $\rR^\bullet(\rQ)$ and $\rS^\bullet(\rQ)$, on which our proofs rely.
\subsection{Bijections for $\rQ$, $\rR$ and $\rS$}\label{sec:mapandquad}
\addtocontents{toc}{\SkipTocEntry}

Suppose we are given only $\rR^\bullet=\rR^{\bullet}(\rQ)$. What additional information is required to reconstruct $\rQ$? 
Similarly, what do we require in addition to $\rS^{\bullet}=\rS^{\bullet}(\rR)$ in order to reconstruct $\rR^{\bullet}$? In each case, the reconstruction requires augmenting the edges with additional data. The reconstruction (equivalently described as {\em decomposition}) procedures which we describe in this section are all either due to Tutte \cite{T} or are obtained by slight variants of his methods. 

When reconstructing $\rR^{\bullet}$ from $\rS^{\bullet}$, this data consists of a $2$-connected quadrangulation for each edge of $\rS^{\bullet}$. When reconstructing $\rQ$ from $\rR^{\bullet}$, we require a {\em sequence} of quadrangulations for each edge of $\rR^{\bullet}$, together with a second, binary sequence whose entries specify how to attach the quadrangulations in the sequence. In both cases, the root edge must be treated slightly differently from the others (in brief, for the root edge we must specify data twice, once for each side of the edge). We now turn to details. 

 A {\em quadrangulation of a $2$-gon} is a rooted map whose unbounded face has degree $2$, with all other faces of degree $4$, rooted such that the unbounded face lies to the left of the root edge. Temporarily write $\cT$ for the set of quadrangulations of $2$-gons. Given a map in $\cT$, merge the two edges incident to the unbounded face to obtain a map in $\cQ$; we call this the natural bijection between $\cT$ and $\cQ$. For $n \ge 3$, it in fact restricts to a bijection between $\cT_n$ and $\cQ_n$. 
Also, $\cT_2$ contains only one element: the map with one edge and two vertices. Recalling that we also view a single edge as a $2$-connected quadrangulation, it follows that $\cT_2=\cQ_2$, and it is convenient to view the natural bijection as associating these two sets with one another. 
 
Let $\rS=(S,uv)$ be a simple quadrangulation. List the vertices of $\rS$ in breadth-first order as $u_1,\ldots,u_n$ and list the edges of $\rS$ as $uv=e_1,\ldots,e_m$, oriented so that the tail precedes the head in breadth-first order. To build a $2$-connected quadrangulation with simple root block $\rS$, proceed as follows (see Figure~\ref{simple}).

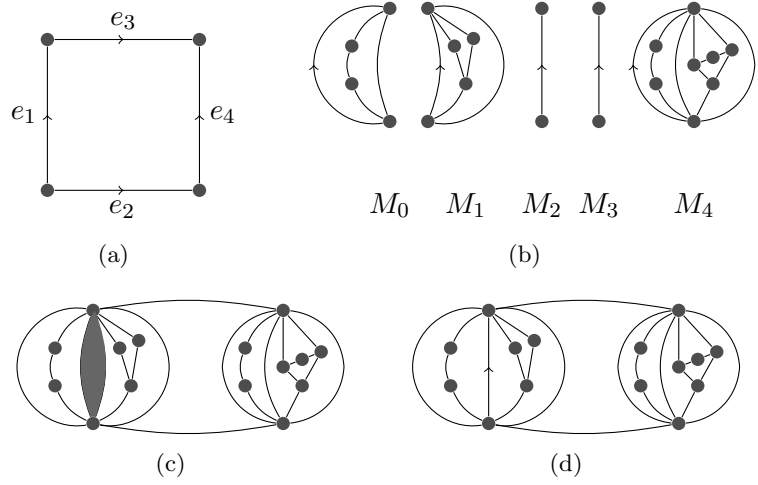
\begin{figure}[htb]

\centering

\subfigure[]{
\begin{tikzpicture}[scale=0.5]
\begin{scope}[decoration={
    markings,
    mark=at position 0.5 with {\arrow{>}}}
    ]
    
	\tikzstyle{main}=[circle,inner sep=0pt,minimum size=5pt,fill=black!70];

	\node[main] (a) at (-2,2) {};
	\node[main] (b) at (2,2){};
	\node[main] (c) at (-2,6){};
	\node[main] (d) at (2,6) {};
	
	\draw[postaction={decorate}] (a) -- node[midway,sloped,right=-0.5pt] [below]{$e_2$} (b);
	\draw[postaction={decorate}] (b) -- node[right]{$e_4$} (d);
	\draw[postaction={decorate}] (a) -- node[left]{$e_1$} (c);
	\draw[postaction={decorate}] (c) -- node[above]{$e_3$} (d);
\end{scope}
\end{tikzpicture}}	
\hspace*{0.5cm}
\subfigure[]{
\begin{tikzpicture}[scale=0.5]
	\tikzstyle{main}=[circle,inner sep=0pt,minimum size=5pt,fill=black!70];
	\begin{scope}[decoration={
    markings,
    mark=at position 0.5 with {\arrow{>}}}
    ]
    	
	\node[main,label={[xshift=0cm,yshift=-1.5cm]$M_0$}] (a) at (-4,-1) {};

\node[main] (a1) at (-5,0) {};
\node[main] (a2) at (-5,1) {};
\node[main] (a3) at (-4,2) {};

\draw (a) to[bend left=20] (a1);
\draw (a1) to[bend left=20] (a2);
\draw (a2) to[bend left=20] (a3);
\draw[postaction={decorate}] (a) to[bend left=50] (-6,0.5) to[bend left=50] (a3);
\draw (a) to[bend left=20] (a3);
	\node[main,label={[xshift=0.5cm,yshift=-1.5cm]$M_1$}] (b) at (-3,-1) {};	

\node[main] (b1) at (-2,0) {};
\node[main] (b2) at (-1.8,1.2){};
\node[main] (b22) at (-2.3,1){};
\node[main] (b3) at (-3,2){};

\draw (b) to[bend right=20] (b1);
\draw (b1) to (b2);
\draw (b2) to (b3);
\draw (b1) to (b22);
\draw (b22) to (b3);
\draw (b) to[bend right=50] (-1,0.5) to[bend right=50] (b3);
\draw[postaction={decorate}]  (b) to[bend right=20] (b3);
	\node[main,label={[xshift=0cm,yshift=-1.5cm]$M_2$}] (c) at (0,-1) {};	

\node[main] (c1) at (0,2) {};

\draw[postaction={decorate}]  (c) to (c1);
	\node[main,label={[xshift=0cm,yshift=-1.5cm]$M_3$}] (d) at (1.5,-1) {};	

\node[main] (d1) at (1.5,2) {};

\draw[postaction={decorate}]   (d) to (d1);

\node[main,label={[xshift=0cm,yshift=-1.5cm]$M_4$}] (e) at (4,-1) {};

\node[main] (e1) at (3,0){};
\node[main] (e2) at (3,1){};
\node[main] (e3) at (4,2){};
\node[main] (e4) at (4,0.5){};
\node[main] (e5) at (4.5,0.7){};
\node[main] (e6) at (5,0.9) {};
\node[main] (e7) at (4.5,0){};
\draw[postaction={decorate}]  (e) to[bend left=40] (2.4,0.5) to[bend left=40] (e3);
\draw (e) to[bend left=20] (e1);
\draw (e1) to[bend left=20] (e2);
\draw (e2) to[bend left=20] (e3);
\draw (e) to[bend left=30] (e3);
\draw (e) to (e7);
\draw (e7) to (e4);
\draw (e7) to (e6);
\draw (e4) to (e5);
\draw (e5) to (e6);
\draw (e4) to (e3);
\draw (e6) to (e3);
\draw (e) to[bend right=40] (5.6,0.5) to[bend right=40] (e3);
\end{scope}
\end{tikzpicture}}	
\\
\subfigure[]{
\begin{tikzpicture}[scale=0.5]
	\tikzstyle{main}=[circle,inner sep=0pt,minimum size=5pt,fill=black!70];
	\begin{scope}[decoration={
    markings,
    mark=at position 0.5 with {\arrow{>}}}
    ]
	
\node[main] (a1) at (-4,0) {};
\node[main] (a2) at (-4,1) {};

\draw (b) to[bend left=20] (a1);
\draw (a1) to[bend left=20] (a2);
\draw (a2) to[bend left=20] (b3);
\draw (b) to[bend left=50] (-5,0.5) to[bend left=50] (b3);
\draw (b) to[bend left=20] (b3);

\node[main] (b) at (-3,-1) {};	
\node[main] (b1) at (-2,0) {};
\node[main] (b2) at (-1.8,1.2){};
\node[main] (b22) at (-2.3,1){};
\node[main] (b3) at (-3,2){};

\draw (b) to[bend right=20] (b1);
\draw (b1) to (b2);
\draw (b2) to (b3);
\draw (b1) to (b22);
\draw (b22) to (b3);
\draw (b) to[bend right=50] (-1,0.5) to[bend right=50] (b3);
\draw (b) to[bend right=20] (b3);

\path[fill=black!60] (b) to[bend left=20] (b3) to (-2.95,2) to[bend left=20] (b);

\node[main] (e) at (2,-1) {};

\node[main] (e1) at (1,0){};
\node[main] (e2) at (1,1){};
\node[main] (e3) at (2,2){};
\node[main] (e4) at (2,0.5){};
\node[main] (e5) at (2.5,0.7){};
\node[main] (e6) at (3,0.9) {};
\node[main] (e7) at (2.5,0){};

\draw (e) to[bend left=40] (0.4,0.5) to[bend left=40] (e3);
\draw (e) to[bend left=20] (e1);
\draw (e1) to[bend left=20] (e2);
\draw (e2) to[bend left=20] (e3);
\draw (e) to[bend left=30] (e3);
\draw (e) to (e7);
\draw (e7) to (e4);
\draw (e7) to (e6);
\draw (e4) to (e5);
\draw (e5) to (e6);
\draw (e4) to (e3);
\draw (e6) to (e3);
\draw (e) to[bend right=40] (3.6,0.5) to[bend right=40] (e3);

\draw (b) to[bend right=10] (e);
\draw (b3) to[bend left=10] (e3);
\end{scope}
\end{tikzpicture}}	
\hspace*{.5cm}
\subfigure[]{
\begin{tikzpicture}[scale=0.5]
	\tikzstyle{main}=[circle,inner sep=0pt,minimum size=5pt,fill=black!70];
	\begin{scope}[decoration={
    markings,
    mark=at position 0.5 with {\arrow{>}}}
    ]
\node[main] (a1) at (-4,0) {};
\node[main] (a2) at (-4,1) {};

\draw (b) to[bend left=20] (a1);
\draw (a1) to[bend left=20] (a2);
\draw (a2) to[bend left=20] (b3);
\draw (b) to[bend left=50] (-5,0.5) to[bend left=50] (b3);

\node[main] (b) at (-3,-1) {};	
\node[main] (b1) at (-2,0) {};
\node[main] (b2) at (-1.8,1.2){};
\node[main] (b22) at (-2.3,1){};
\node[main] (b3) at (-3,2){};

\draw (b) to[bend right=20] (b1);
\draw (b1) to (b2);
\draw (b2) to (b3);
\draw (b1) to (b22);
\draw (b22) to (b3);
\draw (b) to[bend right=50] (-1,0.5) to[bend right=50] (b3);
\draw[postaction={decorate}]   (b) to (b3);

\node[main] (e) at (2,-1) {};

\node[main] (e1) at (1,0){};
\node[main] (e2) at (1,1){};
\node[main] (e3) at (2,2){};
\node[main] (e4) at (2,0.5){};
\node[main] (e5) at (2.5,0.7){};
\node[main] (e6) at (3,0.9) {};
\node[main] (e7) at (2.5,0){};

\draw (e) to[bend left=40] (0.4,0.5) to[bend left=40] (e3);
\draw (e) to[bend left=20] (e1);
\draw (e1) to[bend left=20] (e2);
\draw (e2) to[bend left=20] (e3);
\draw (e) to[bend left=30] (e3);
\draw (e) to (e7);
\draw (e7) to (e4);
\draw (e7) to (e6);
\draw (e4) to (e5);
\draw (e5) to (e6);
\draw (e4) to (e3);
\draw (e6) to (e3);
\draw (e) to[bend right=40] (3.6,0.5) to[bend right=40] (e3);

\draw (b) to[bend right=10] (e);
\draw (b3) to[bend left=10] (e3);
\end{scope}
\end{tikzpicture}}	
\caption{\small (a) A simple quadrangulation. (b) ``Decorations'' for the edges. (c) After attaching the decorations. (d) The map $\rR$.}
\label{simple}
\end{figure}

\begin{enumerate}
\item Create a second copy $e_0$ of the edge $uv$ so that $e_0$ lies to the left of $e_1$. 
\item For $0 \le i \le m$ let $\rM_i$ be a $2$-connected quadrangulation, and let $\rM'_i=(M_i,u_iv_i)$ be the quadrangulation of a $2$-gon associated to $\rM_i$ by the natural bijection.
\item For each $0 \le i \le m$, identify the edge $e_i$ with the root edge $u_iv_i$ of $\rM_i'$. The resulting map has a single facial $2$-cycle (lying between $\rM_0$ and $\rM_1$), with vertices $u$ and $v$; collapse it and root at the resulting edge $uv$.
\end{enumerate}
Call the resulting map $\rR$. Then $\rR$ is a $2$-connected quadrangulation with $\rS^{\bullet}(\rR)=\rS$. We note that
\begin{align}
|e(\rR)|=&~ |e(\rS)|+\sum_{i=0}^{|e(\rS)|} |e(\rM_{i})|~\I{|e(\rM_i)|\ne1}
=-1+\sum_{i=0}^{|e(\rS)|}(1+|e(\rM_i)|~\I{|e(\rM_i)|\ne1})~. \label{eq:edge_count0}
\end{align}

\begin{prop}\label{decom1}
The above procedure induces a bijection $\varphi$ between $\cR$ and the set 
\[
\{ (\rS,\Theta): \rS\in \cS, \Theta \in \cR^{|e(\rS)|+1}\}~.
\]
\end{prop}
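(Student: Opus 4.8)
The plan is to establish that the construction map $\varphi$ from $\{(\rS,\Theta): \rS \in \cS, \Theta \in \cR^{|e(\rS)|+1}\}$ to $\cR$ is a well-defined bijection by exhibiting an explicit inverse, namely the canonical decomposition of a $2$-connected quadrangulation along its simple root block. First I would verify that the output $\rR$ of the construction is genuinely a member of $\cR$: each $\rM_i'$ is a quadrangulation of a $2$-gon, so identifying $e_i$ with its root edge $u_iv_i$ preserves the property that every bounded face has degree $4$; collapsing the unique facial $2$-cycle between $\rM_0$ and $\rM_1$ then yields a bona fide quadrangulation, and $2$-connectedness follows because $\rS$ is simple (hence $2$-connected) and each inserted block is $2$-connected and glued along an edge whose two endpoints are not separated. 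I would also check that the root-edge convention (creating the extra copy $e_0$ to the left of $e_1$, collapsing, and re-rooting at $uv$) is consistent, so that $\varphi$ lands in $\cR$ and not merely in unrooted $2$-connected quadrangulations.

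Next I would define the reverse map. Given $\rR \in \cR$, form $\rS^\bullet(\rR)$ and let $\rS = \rS^\bullet(\rR) \in \cS$ (using that $\rS^\bullet(\rR)$ is simple by the discussion preceding the proposition, together with the fact that a single edge is admitted as a simple quadrangulation, which handles the degenerate cases). The key structural claim is that $\rR$ decomposes canonically: the complement of (a suitable embedding of) $\rS$ inside $\rR$ consists, for each edge $e_i$ of $\rS$, of a uniquely determined submap which is a $2$-connected quadrangulation $\rM_i$ attached across $e_i$, plus the extra data $\rM_0$ for the root side; equivalently each $e_i$ of $\rS$ corresponds to a $2$-gon-quadrangulation ``pendant'' whose natural-bijection image is $\rM_i \in \cR$. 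Here I would lean on Tutte's decomposition: because $\rS = \rS^\bullet(\rR)$ is the (lexicographically minimal maximal) nearly-simple block through the root, the pieces of $\rR$ hanging off each edge of $\rS$ are well-defined and themselves $2$-connected, so the assignment $\rR \mapsto (\rS, (\rM_i)_{0\le i \le m})$ is well-defined. One must be careful about the breadth-first labelling: the edges $e_1,\dots,e_m$ of $\rS$ are ordered by the breadth-first order inherited from $\rS$ (not from $\rR$), so I would confirm this ordering is intrinsic to $\rS$ and hence the index set of $\Theta$ is canonically $\{0,1,\dots,|e(\rS)|\}$.

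Then I would show the two maps are mutually inverse. In one direction, applying the construction to $(\rS,\Theta)$ and then decomposing recovers $\rS$ (one must check $\rS^\bullet(\rR) = \rS$, i.e. that the construction does not accidentally enlarge the simple root block — this is where the collapsing of nearly-facial $2$-cycles in the definition of $\rS^\bullet$ matters, and where the ``$\rM_i$ is a $2$-connected block glued along an edge'' structure guarantees no new simple-block vertices are created) and recovers each $\rM_i$ by reading off the pendant across $e_i$. In the other direction, decomposing $\rR$ and then reconstructing returns $\rR$, since gluing the $2$-gon-quadrangulations back across the edges of $\rS$ and collapsing the root $2$-cycle inverts the excision. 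The edge-count identity \eqref{eq:edge_count0} is a bookkeeping consequence (each non-trivial $\rM_i$ contributes $|e(\rM_i)|$ edges after merging the two $2$-gon edges, the trivial $\rM_i$ = single edge contributes nothing beyond $e_i$ itself, and the root $2$-cycle collapse removes one edge), and I would include it as a sanity check rather than as a logical step.

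\textbf{Main obstacle.} The delicate point is not the topological gluing but the \emph{canonicity} of the decomposition: proving that $\rS^\bullet(\rR)$ is precisely the simple block one ``sees'' after stripping the pendant $2$-connected quadrangulations, and that each pendant is itself in $\cR$ with a well-defined root edge matching $e_i$. Showing $\rS^\bullet(\rR) = \rS$ after reconstruction requires ruling out the possibility that some inserted $\rM_i$, or the interaction between $\rM_0$ and $\rM_1$ across the root, creates a larger nearly-simple block or a nearly-facial $2$-cycle that changes $\rS^\bullet$; this is exactly the reason the authors collapse nearly-facial $2$-cycles and choose the lexicographically minimal maximal nearly-simple block. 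I expect this verification — essentially a careful invocation and adaptation of Tutte's substitution/decomposition theory to the ``nearly simple'' setting — to be the technical heart of the argument, with everything else being routine.
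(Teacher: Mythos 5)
Your proposal follows the same route as the paper's proof: construct the forward map, identify the inverse as the canonical simple-root-block decomposition, and observe that the verification reduces to the fact that the construction satisfies $\rS^{\bullet}(\rR)=\rS$ (which the paper simply asserts in the sentence preceding the proposition and then invokes). The paper's actual proof is a three-sentence argument that the collapsing/re-rooting operation is a bijection and cites $\rS^{\bullet}(\rR)=\rS$; your write-up spells out the same content in more detail and correctly flags the canonicity of the decomposition as the point requiring care.
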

\begin{proof}
Given a $2$-connected quadrangulation of a $2$-gon, collapsing the unbounded face to form a single edge (which is equivalent to taking the simple nerve), then rooting at this edge, yields a $2$-connected quadrangulation. This operation is easily seen to be a bijection. In view of the fact that the quadrangulation $\rR\in\cR$ in the above construction has $\rS^{\bullet}(\rR)=\rS$, the result follows. 
\end{proof}

Next let $\rR=(R,uv)$ be a $2$-connected quadrangulation and list $e(\rR)$ as $e_1,\ldots,e_m$, as above. For each integer $1\le i\le m$, write $e_i^+$ and $e_i^-$ for the head and the tail of $e_i$ respectively. To build a quadrangulation with $2$-connected root block $\rR$, proceed as follows (see Figure~\ref{2con}).

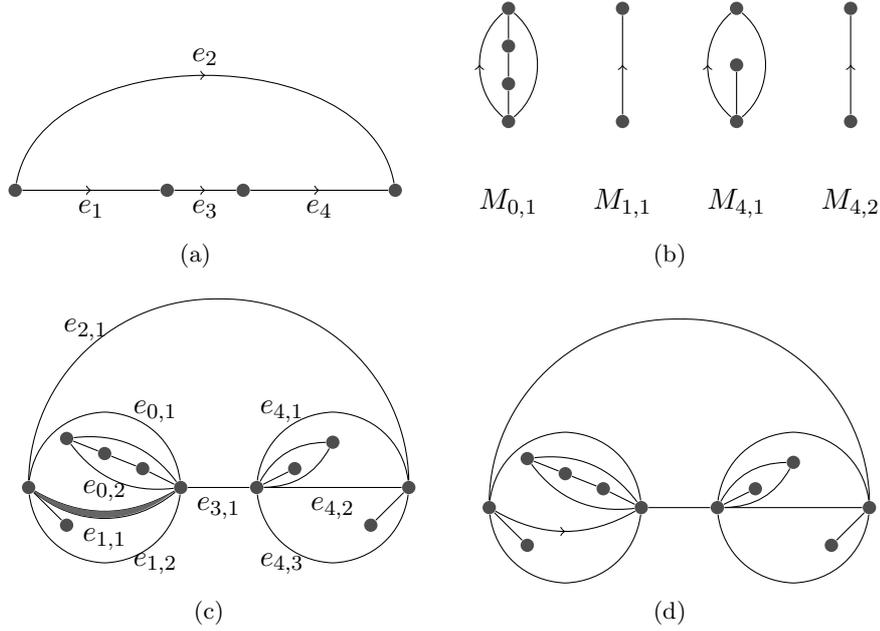
\begin{figure}[htb]

\centering

\subfigure[]{
\begin{tikzpicture}[scale=0.5]
	\tikzstyle{main}=[circle,inner sep=0pt,minimum size=5pt,fill=black!70];
\begin{scope}[decoration={
    markings,
    mark=at position 0.5 with {\arrow{>}}}
    ]
	\node[main] (a) at (-2,4) {};
	\node[main] (b) at (2,4){};
	\node[main] (d) at (4,4){};
	\node[main] (c) at (8,4) {};
	
	\draw[postaction={decorate}]   (a) -- node[below]{$e_1$} (b);
	\draw[postaction={decorate}]   (a) to[bend left=80] node[above]{$e_2$}  (c);
	\draw[postaction={decorate}]   (d) to node[below]{$e_4$} (c);
	\draw[postaction={decorate}]   (b) to node[below]{$e_3$} (d);
\end{scope}
\end{tikzpicture}}	
\hspace*{0.5cm}
\subfigure[]{
\begin{tikzpicture}[scale=0.5]
	\tikzstyle{main}=[circle,inner sep=0pt,minimum size=5pt,fill=black!70];
	\begin{scope}[decoration={
    markings,
    mark=at position 0.5 with {\arrow{>}}}
    ]	\node[main,label={[xshift=0cm,yshift=-1.5cm]$M_{0,1}$}] (a) at (-4,-1) {};

\node[main] (a1) at (-4,0){};
\node[main] (a2) at (-4,1){};
\node[main] (a3) at (-4,2){};
\draw[postaction={decorate}]   (a) to[bend left=50] (a3);
\draw (a) to (a1) to (a2) to (a3);
\draw (a) to[bend right=50] (a3);
	\node[main,label={[xshift=0cm,yshift=-1.5cm]$M_{1,1}$}] (b) at (-1,-1) {};	

\node[main] (b1) at (-1,2){};

\draw[postaction={decorate}]   (b) to (b1);
	\node[main,label={[xshift=0cm,yshift=-1.5cm]$M_{4,1}$}] (c) at (2,-1) {};	

\node[main] (c1) at (2,0.5){};
\node[main] (c2) at (2,2){};
\draw[postaction={decorate}]   (c) to[bend left=50] (c2);
\draw (c) to (c1);
\draw (c) to[bend right=50] (c2);
	\node[main,label={[xshift=0cm,yshift=-1.5cm]$M_{4,2}$}] (d) at (5,-1) {};	

\node[main] (d1) at (5,2) {};

\draw[postaction={decorate}]   (d) to (d1);
\end{scope}
\end{tikzpicture}}	
\\
\subfigure[]{
\begin{tikzpicture}[scale=0.5]
	\tikzstyle{main}=[circle,inner sep=0pt,minimum size=5pt,fill=black!70];

	\node[main] (a) at (-2,4) {};
	\node[main] (b) at (2,4){};
	\node[main] (d) at (4,4){};
	\node[main] (c) at (8,4) {};
	\node[main] (b1) at (1,4.5){};
	\node[main] (b2) at (0,4.9){};
	\node[main] (b3) at (-1,5.3){};
	\node[main] (a1) at (-1,3){};
	
	\draw (a) to[bend left=45] node[above]{$e_{2,1}$} (3,9) to[bend left=45] (c);
	\draw (c) to node[below]{$e_{4,2}$} (d);
	\draw (a) to[bend left=40] (0,6) to[bend left=40] node[above]{$e_{0,1}$}  (b);
	\draw (b) to (b1) to (b2) to (b3);
	\draw (b) to[bend left=30] (b3);
	\draw (b) to[bend right=30] (b3);
	\draw (b) to node[below]{$e_{3,1}$} (d);

	\draw (a) to[bend right=30] node[above]{$e_{0,2}$} (b);
	\draw (a) to[bend right=40] node[below]{$e_{1,1}$} (b);
	\draw (a) to (a1);
	\draw (a) to[bend right=40] (0,2) to[bend right=40] node[below]{$e_{1,2}$}  (b);

	\node[main] (d1) at (7,3){};
	\draw (c) to (d1);
	\node[main] (c1) at (5,4.5){};
	\node[main] (c2) at (6,5.2){};
	\draw (d) to (c1);
	\draw (d) to[bend left=40] node[above]{$e_{4,1}$} (6,6) to[bend left=40]   (c);
	\draw (d) to[bend right=40] node[below]{$e_{4,3}$} (6,2) to[bend right=40]  (c);
	\draw (d) to[bend left=30] (c2);
	\draw (d) to[bend right=30]  (c2);

	\path[fill=black!60] (a) to[bend right=30] (b) to[bend left=40] (a);

\end{tikzpicture}}	
\hspace*{.5cm}
\subfigure[]{
\begin{tikzpicture}[scale=0.5]
	\tikzstyle{main}=[circle,inner sep=0pt,minimum size=5pt,fill=black!70];
\begin{scope}[decoration={
    markings,
    mark=at position 0.5 with {\arrow{>}}}
    ]
	\node[main] (a) at (-2,4) {};
	\node[main] (b) at (2,4){};
	\node[main] (d) at (4,4){};
	\node[main] (c) at (8,4) {};
	\node[main] (b1) at (1,4.5){};
	\node[main] (b2) at (0,4.9){};
	\node[main] (b3) at (-1,5.3){};
	\node[main] (a1) at (-1,3){};
	
	\draw (a) to[bend left=45]  (3,9) to[bend left=45] (c);
	\draw (c) to (d);
	\draw (a) to[bend left=40] (0,6) to[bend left=40]  (b);
	\draw (b) to (b1) to (b2) to (b3);
	\draw (b) to[bend left=30] (b3);
	\draw (b) to[bend right=30] (b3);
	\draw (b) to (d);

	\draw[postaction={decorate}]   (a) to[bend right=30]   (b);
	\draw (a) to (a1);
	\draw (a) to[bend right=40] (0,2) to[bend right=40] (b);

	\node[main] (d1) at (7,3){};
	\draw (c) to (d1);
	\node[main] (c1) at (5,4.5){};
	\node[main] (c2) at (6,5.2){};
	\draw (d) to (c1);
	\draw (d) to[bend left=40] (6,6) to[bend left=40]   (c);
	\draw (d) to[bend right=40]  (6,2) to[bend right=40]  (c);
	\draw (d) to[bend left=30] (c2);
	\draw (d) to[bend right=30]  (c2);

\end{scope}
\end{tikzpicture}}	
\caption{\small The quadrangulation in (d) can be reconstructed from its $2$-connected core in (a) with the decoration $((L_i,b_i): 0\le i\le r)$ where $L_0=(M_{0,1}),b_0=(1),L_1=(M_{1,1}),b_1=(0)$, $L_2=L_3=\emptyset,b_2=b_3=\emptyset,L_4=(M_{4,1},M_{4,2}),b_4=(0,1)$.}
\label{2con}
\end{figure}

\begin{enumerate}
\item Create a second copy $e_0$ of the edge $uv$ so that $e_0$ lies to the left of $e_1$. 
\item For $0 \le i \le m$ fix $\ell_i \in \N_{\ge 0}$ and sequences $L_i=(\rM_{i,j}:1 \le j \le \ell_i) \in \cQ^{\ell_i}$, 
$b_i = (b_{i,j}:1 \le j \le \ell_i) \in \{0,1\}^{\ell_i}$. 
\item For each $1 \le i \le m$, add an additional $\ell_i$ copies of $e_i$; label the resulting $\ell_i+1$ copies of $e_i$ as $e_{i,1},\ldots,e_{i,\ell_i+1}$ in clockwise order around $e_i^-$. 
\item For $0 \le i \le m$ and $1 \le j \le \ell_i$, let $\rM_{i,j}'$ be the quadrangulation of a $2$-gon associated to $\rM_{i,j}$ by the natural bijection. 
\item Attach $\rM_{i,j}'=(M_{i,j},u_{i,j}v_{i,j})$ inside the $2$-cycle formed by $e_{i,j}$ and $e_{i,j+1}$ by identifying $u_{i,j}$ with $e_i^-$ (if $b_{i,j}=0$) or $e_i^+$ (if $b_{i,j}=1$). 
The resulting map has a single facial $2$-cycle, with edges $e_{0,\ell_0+1}$ and $e_{1,1}$; collapse it and root at the resulting edge $uv$.
\end{enumerate}
Call the resulting map $\rQ$. Then $\rQ$ is a connected quadrangulation with $\rR^{\bullet}(\rQ)=\rR$. 
We note that
\begin{align}
|e(\rQ)|& =|e(\rR)|+\sum_{i=0}^{|e(\rR)|} \sum_{j=1}^{\ell_i} (|e(\rM_{i,j})|+1+\I{|e(\rM_{i,j})|\ne1})) \notag\\
 &=  -1+\sum_{i=0}^{|e(\rR)|}\bigg(1+ \sum_{j=1}^{\ell_i}( |e(\rM_{i,j})|+1+\I{|e(\rM_{i,j})|\ne1})\bigg)~. \label{eq:edge_count}
\end{align}
In the following proposition we write $(\cQ\times\{0,1\})^* = \{\emptyset\} \cup \bigcup_{n \in\N} (\cQ\times\{0,1\})^n$. 
\begin{prop}\label{decom2}
The above procedure induces a bijection $\psi$ between $\cQ$ and the set 
\[
\left\{ (\rR,\Gamma): \rR\in \cR, \Gamma \in \big((\cQ \times \{0,1\})^*\big)^{|e(\rR)|+1}\right\}~.
\]
\end{prop}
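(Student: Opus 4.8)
The plan is to show that $\psi$ is a bijection by exhibiting an explicit inverse, the decomposition map, and checking that the two composites are the identity. First I would describe the decomposition map $\psi^{-1}$: given a connected quadrangulation $\rQ=(Q,uv)$, form its $2$-connected root block $\rR^\bullet=\rR^\bullet(\rQ)=(Q[B_{uv}]^\circ,uv)$ as defined above, list its edges $e_1,\ldots,e_m$ in breadth-first order (with $e_1=uv$) and adjoin the bookkeeping copy $e_0$. Each edge $e_i$ of $\rR^\bullet$, viewed inside $Q$ before collapsing nearly-facial $2$-cycles, bounds a (possibly empty) nested family of pendant submaps of $Q$ that are separated from the rest of $Q$ by the two endpoints $e_i^-, e_i^+$; reading these off in clockwise order around $e_i^-$ recovers the sequence $L_i=(\rM_{i,j}:1\le j\le \ell_i)\in\cQ^{\ell_i}$ (each $\rM_{i,j}$ being the element of $\cQ$ obtained from the corresponding quadrangulation of a $2$-gon via the natural bijection), while recording, for each $j$, whether the distinguished root vertex of $\rM'_{i,j}$ was identified with $e_i^-$ or $e_i^+$ gives the bit $b_{i,j}$. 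Setting $\Gamma=((L_i,b_i):0\le i\le m)$ produces an element of $\big((\cQ\times\{0,1\})^*\big)^{|e(\rR^\bullet)|+1}$, and $\psi^{-1}(\rQ)=(\rR^\bullet,\Gamma)$.

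Next I would verify $\psi\circ\psi^{-1}=\mathrm{id}$ and $\psi^{-1}\circ\psi=\mathrm{id}$. The first direction amounts to checking that reattaching the recorded pendant submaps inside the corresponding $2$-cycles, with the recorded attaching bits, and then collapsing the unique facial $2$-cycle between $e_{0,\ell_0+1}$ and $e_{1,1}$, returns $\rQ$; this is essentially Tutte's classical observation that a connected quadrangulation is the $2$-connected block with a quadrangulation hung in each ``slot'' along each of its edges, and the only thing to confirm is that our breadth-first rooting conventions make all the choices canonical (which edge becomes $e_0$, the clockwise ordering of the copies $e_{i,1},\ldots,e_{i,\ell_i+1}$, and which side of the collapsed $2$-cycle is rooted). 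For the second direction I must check that the construction in steps~(1)--(5) always yields a quadrangulation $\rQ$ with $\rR^\bullet(\rQ)=\rR$ exactly equal to the input $\rR$ — in particular that no extra $2$-connectivity is accidentally created (the attached $\rM'_{i,j}$ are glued through a single vertex, hence cannot enlarge a $2$-connected block) and that no nearly-facial $2$-cycle of $\rQ$ other than the one collapsed in step~(5) lies inside $B_{uv}$ — so that the decomposition of the output $\rQ$ returns precisely $(\rR,\Gamma)$. The edge-count identity \eqref{eq:edge_count} and the face-degree check (each attached $\rM'_{i,j}$ is a quadrangulation of a $2$-gon, so merging its boundary edges preserves the property that every face has degree $4$) are routine consistency checks along the way.

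The main obstacle is the verification that the map $\psi^{-1}$ is well defined and is genuinely inverse to $\psi$ at the level of \emph{rooted} maps: one has to be careful that the canonical orderings ($<_\rQ$ on vertices, $\prec_\rQ$ on oriented edges, clockwise edge orders) are preserved under both operations, so that the data extracted is exactly the data that was inserted, with no residual ambiguity coming from automorphisms or from the collapsing of nearly-facial $2$-cycles. Since the construction here is, as the authors note, only a slight variant of Tutte's and is closely parallel to Proposition~\ref{decom1} (whose proof is short), I expect the bijectivity itself to be uncontroversial once the conventions are pinned down; the bulk of the work is conceptual bookkeeping rather than a substantive combinatorial difficulty.
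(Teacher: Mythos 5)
Your proposal is correct and takes the same approach as the paper, which simply observes that the construction satisfies $\rR^{\bullet}(\rQ)=\rR$ and declares the bijection immediate; you merely unpack the one-sentence argument into the explicit inverse (the decomposition map) and the two composite identities. The extra bookkeeping about canonical orderings and the collapsing of the facial $2$-cycle is sound and is exactly what the paper leaves implicit.
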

\begin{proof}
This is immediate from the fact that the above construction has $\rR^{\bullet}(\rQ)=\rR$.
\end{proof}

For both decompositions, we refer informally to the maps in the vectors $\Theta$ and $\Gamma$ as {\em decorations} or as {\em pendant submaps}.

\subsection{Proof sketch for \refT{thm2}} \label{sec:sketch}
\addtocontents{toc}{\SkipTocEntry}

In this subsection, we assume familiarity with the Gromov-Hausdorff and Gromov-Hausdorff-Prokhorov distances. The relevant definitions appear in \refS{sec:pre}. We begin by stating (and sketching the proof of) a joint convergence result for a 2-connected quadrangulation and its largest simple block; the proof of this result contains most of the key ideas for the proof of Theorem~\ref{thm2}. 

Given $\bR_r=(R_r,{e}_r) \in_u \cR_{r,s(r)}$, it is easily seen that $\bS_r=\rS(\bR_r)$ is uniformly distributed over $\cS_{s(r)}$. Then by \citep[Theorem 1]{ABA}, $(3/8s(r))^{1/4} \bS_r \convdist \bM$ as $s(r)\to\infty$. Also, the definition of $s(r)$ guarantees that 
$(\frac 3{8s(r)})^{1/4} \cdot (\frac {21}{40 r})^{-1/4}\to 1$ as $r\to\infty$. 

Let ${e}'$ be the $\prec_{\bR_r}$-minimal oriented edge of $\bS_r$. 
If ${e}_r \in e(\bS_r)$ then $\bS_r = \rS^{\bullet}(\bR_r)$. 
Write $\bR_r'=(R_r,{e}')$. By \refP{decom1}, $\bR_r'$  uniquely decomposes as $\varphi(\bR_r')=(S,\Theta)\in \cS_{s(r)} \times \cR^{|e(\rS)|+1}$, and our choice of ${e}'$ guarantees that $S=\bS_r$. Write $\Theta=(\Theta_i:0 \le i \le 2s(r)-4)$, and 
\begin{align*}
L(\bR_r) & = \max\left\{|v(\Theta_i)|: 0\le i\le 2s(r)-4 \right\}~,\\
D(\bR_r) & = \max\left\{\diam(\Theta_i): 0\le i\le 2s(r)-4 \right\}~. 
\end{align*}
In words, $L(\bR_r)$ and $D(\bR_r)$ are the greatest number of vertices and the greatest diameter, respectively, of any submap pendant to the biggest simple block of $\bR_r$. The identification of $\bS_r$ as a submap of $\bR_r$ gives the bound
$
\dgh(\bR_r,\bS_r) \le D(\bR_r).
$
To prove that $ \left(\frac {21}{40 r}\right)^{1/4}\dgh(\bR_r,\bS_r) = o(1)$ in probability, it thus suffices to show that 
$
 \left(\frac {21}{40 r}\right)^{1/4}D(\bR_r)=o(1)
$ in probability.
(Note that here we have the Gromov-Hausdorff rather than Gromov-Hausdorff-Prokhorov distance!) 

To accomplish this, we use the methodology developed by \citet{BFSS}, which allows one to describe the largest block size of a map whenever the map may be described by a recursive decomposition into rooted blocks, using a suitable composition schema; this is explained in greater detail in \refS{sec:schema}. 
We thereby obtain the following distributional result for $|v(\bS_r)|$.

\begin{prop}\label{airyprop2}
Let $\bR_r\in_u \cR_r$, then for any $A>0$, uniformly over $x\in [-A,A]$, 
\[
\p{\sb(\bR_r) = \lfloor 5 r/7+ x r^{2/3} \rfloor} = \frac {\beta\cA\left(\beta x\right)}{r^{2/3}} ( 1+o(1))~,
\]
where $\beta$ is given in \refP{schema2}.
\end{prop}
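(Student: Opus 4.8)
The plan is to present \refP{airyprop2} as an instance of the critical-composition-schema machinery of \citet{BFSS} applied to the bijective decomposition of \refP{decom1}, with the analytic core deferred to \refS{sec:schema}. First I would reduce the statement to estimating the \emph{expected number} of large simple blocks: since $5r/7 > r/2$ and any two simple blocks of a $2$-connected quadrangulation meet in at most one edge, for $r$ large no $2$-connected quadrangulation on $r$ vertices can have two distinct simple blocks on $k = \lfloor 5r/7 + xr^{2/3}\rfloor$ vertices when $|x|\le A$; hence $\p{\sb(\bR_r) = k} = \E{N_k(\bR_r)}$, where $N_k(\rR)$ denotes the number of simple blocks of $\rR$ on exactly $k$ vertices.

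Next I would pass to generating functions. With $R(z) = \sum_{\rR\in\cR} z^{|v(\rR)|}$ and $S(w) = \sum_{\rS\in\cS} w^{|v(\rS)|}$, the decomposition of \refP{decom1} --- a $2$-connected quadrangulation is its root simple block $\rS$, with a $2$-connected quadrangulation substituted into each of the $|e(\rS)| + 1$ marked edges --- together with the attendant vertex bookkeeping (cf.\ \eqref{eq:edge_count0}) yields a functional equation of composition type in which $S$ is the outer series and the substituted (inner) argument is an explicit elementary function $H(z)$ of $z$ and $R(z)$, essentially $H(z) = z^{-3}R(z)^2$, with minor care needed when the root block is a single edge. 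Marking by an extra variable $u^{|v(\rS)|}$ the vertices of a distinguished simple block turns this into a bivariate series whose coefficient ratios $[z^r u^k](\,\cdot\,)/[z^r]R(z)$ express, for a suitably pointed variant, exactly $\E{N_k(\bR_r)}$ (and, without pointing, $\p{|v(\rS^\bullet(\bR_r))| = k}$).

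The heart of the matter is the singular behaviour of $R$ and $S$, which is the content of \refP{schema2}, established from the classical enumeration of simple and of $2$-connected quadrangulations: $(S,H)$ form a \emph{critical} composition schema, meaning that $S$ has a square-root singularity at a point $\rho_S$, that $H$ attains the value $\rho_S$ precisely at the dominant singularity $\rho_R$ of $R$, and that the sub-exponential exponents are aligned so that the composition develops the fractional-power singularity responsible for an Airy phenomenon. Granting this, the (uniform) transfer theorem of \citet{BFSS} --- a two-parameter coalescing-saddle analysis localised at $(z,u)=(\rho_R,1)$ --- gives, for $k = \lfloor 5r/7 + xr^{2/3}\rfloor$,
\[
\p{\sb(\bR_r) = k} = \frac{\beta\,\cA(\beta x)}{r^{2/3}}\,(1 + o(1))
\]
uniformly over $x\in[-A,A]$, with $\cA$ the map-Airy density, $\beta$ the constant of \refP{schema2}, and the centering $5r/7$ read off from the schema.

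The main obstacle is precisely this singularity analysis underlying \refP{schema2}: one must produce the exact singular expansions of $R(z)$ and $S(w)$ near their dominant singularities, verify the delicate coincidence that $H(\rho_R)$ equals the singularity $\rho_S$ of $S$ (failing which the limit law would be Gaussian rather than Airy), and control the sub-exponential constants finely enough to pin down both $\beta$ and the centering $5r/7$. Two secondary technical points are making the \citet{BFSS} transfer estimate uniform over the compact window $x\in[-A,A]$, and --- should one route the argument through the root simple block $\rS^\bullet$ rather than through $\E{N_k}$ --- bounding $\p{\sb(\bR_r) = k,\ |v(\rS^\bullet(\bR_r))|\ne k}$ on the bulk range, which comes down to the tail estimate that every pendant submap $\Theta_i$ of \refP{decom1} has $o(r)$ vertices with high probability, so that none of them can harbour a simple block rivalling the root one.
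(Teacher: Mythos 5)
Your approach is essentially the paper's --- the BFSS critical-composition machinery applied to the schema of \refP{decom1}/\refL{compo}, with \refP{schema2} carrying the singular analysis --- and your formula $z^{-3}R(z)^2$ for the inner function is indeed correct under your unshifted convention $R(z)=z^2(1+C(z))$. The only streamlining in the paper is that it invokes \refC{largestcor} (BFSS Theorem~7) directly, which is stated for the \emph{largest} $\mathcal C$-component, so the reduction from the root simple block to the largest one (your first paragraph and your last technical point) is absorbed into the cited result and need not be handled separately via expected counts of large blocks.
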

  
The proof of \refP{airyprop2} appears in \refS{sec:schema}. The range of values for $r$ in the above local limit theorem is what leads to our choice for the range of $s(r)$ in \refT{thm2} and \refT{thm:blocksizes}. The following proposition bounds the size of the largest simple block of a random $2$-connected quadrangulation.

\begin{prop}\label{pendant2}
For any $A>0$, there exist positive constants $c_1$ and $c_2$ such that for all $r\in\N$ and for integer $k\in\left(5 r/7 +A r^{2/3}, r\right]$, if $\bR_r\in_u \cR_r$,
\[
\p{\sb(\bR_r) = k} \le c_1 \exp\left(-c_2 r \left(k/r - 5/7\right)^3\right)~.
\]
\end{prop}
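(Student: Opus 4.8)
The plan is to use the bijection $\varphi$ of Proposition~\ref{decom1} to reduce the tail estimate on $\sb(\bR_r)$ to a counting statement about $|\cR_r|$ versus $|\cS_k|$ and the number of ways to distribute the remaining vertices among pendant $2$-connected quadrangulations, and then to apply the standard large-deviation bounds for $|\cR_n|$ and $|\cS_n|$ coming from their algebraic generating functions. Concretely, for a $2$-connected quadrangulation $\rR\in\cR_r$ with $\sb(\rR)=k$, the simple root block need not be the largest simple block, but by summing over the choice of which simple block is largest (there are at most $O(r)$ blocks, so this costs only a polynomial factor) one reduces to estimating the number of $\rR\in\cR_r$ with $\sb^{\bullet}(\rR)=|v(\rS^{\bullet}(\rR))|=k$; equivalently, by the bijection $\varphi$, one must bound
\[
\#\Big\{(S,\Theta): S\in\cS_k,\ \Theta\in\cR^{2k-4},\ |v(\varphi^{-1}(S,\Theta))|=r\Big\}
\]
using the edge/vertex count implicit in \eqref{eq:edge_count0}. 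Writing everything in terms of generating functions, $|\cR_r|$ has a square-root singularity at some $\rho_{\cR}$ and $|\cS_k|$ at some $\rho_{\cS}$, with $\rho_{\cS}>\rho_{\cR}$, and the composition relation $R(z) = S(z\cdot(\text{something involving }R))$ places the singularity of $R$ exactly at the point where the argument of $S$ equals $\rho_{\cS}$; this is the ``subcritical'' composition schema behind Proposition~\ref{airyprop2}, and $5/7$ is the ratio of the relevant derivatives at that point.

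The key steps, in order, are: (1) reduce from ``largest simple block has size $k$'' to ``simple root block has size $k$'' at the cost of a factor $O(r)$; (2) apply Proposition~\ref{decom1} to write the number of such $\rR$ as a coefficient of a product generating function, $[z^r]\, \sigma_k(z)\cdot r_0(z)^{2k-4}$ roughly speaking, where $\sigma_k$ enumerates simple quadrangulations with $k$ vertices marked by total size and $r_0(z)$ is the generating function for a single pendant $2$-connected quadrangulation-of-a-$2$-gon; (3) extract the exponential order of this coefficient by a saddle-point / Chernoff bound: bounding $[z^r](\cdot)\le z_0^{-r}\sigma_k(z_0) r_0(z_0)^{2k-4}$ for the optimal real $z_0\in(0,\rho_{\cR})$, and optimizing; (4) compare with $|\cR_r|\sim c\,\rho_{\cR}^{-r} r^{-5/2}$ to get the ratio, and Taylor-expand the resulting rate function around $k=5r/7$ to identify the leading behaviour as $-c_2 r(k/r-5/7)^3$ (the cubic, rather than quadratic, vanishing is forced because we are on the boundary of the ``typical'' regime where the Airy-type local limit theorem of Proposition~\ref{airyprop2} holds, so the quadratic term of the rate function vanishes at the critical point); (5) absorb all polynomial-in-$r$ factors (including the $O(r)$ from step (1) and the $r^{-5/2}$ type corrections) into the constants $c_1,c_2$, using that for $k>5r/7+Ar^{2/3}$ the exponential term dominates any polynomial.

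The main obstacle I expect is step (3)--(4): making the saddle-point estimate uniform in $k$ over the whole range $(5r/7+Ar^{2/3},\,r]$, including the degenerate endpoint $k=r$ (where all pendant submaps are trivial single edges and the count collapses to just $|\cS_r|$, which must still be shown to be exponentially smaller than $|\cR_r|$), and correctly identifying that the rate function $I(k/r)$ satisfies $I(5/7)=I'(5/7)=I''(5/7)=0$ with $I'''(5/7)>0$, so that a clean cubic lower bound on $I$ holds throughout the range. This requires knowing the precise nature of the singularities of $R(z)$ and $S(z)$ and their composition — all of which is exactly the ``composition schema'' input that \citet{BFSS} supply and that we reprove in \refS{sec:schema} for the present maps; granting that input (in particular the value of $\rho_{\cR}$, $\rho_{\cS}$, and the derivative computation yielding $5/7$, as in Proposition~\ref{schema2}), the estimate is then a routine, if slightly lengthy, Chernoff-bound argument. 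A secondary technical point is the reduction in step (1): one must check that the $\prec_{\bR_r}$-minimal-edge rooting convention does not cause over- or under-counting, but since each simple block is counted with its own canonical root and there are at most $|e(\rR)| = O(r)$ of them, a crude union bound suffices.
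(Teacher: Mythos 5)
Your overall strategy (decompose via $\varphi$, reduce to a generating-function coefficient, saddle-point, compare singularities) is sound and would indeed give the result, but it is a genuinely different route from the paper's, and a considerably longer one. You are in effect proposing to reprove the Banderier--Flajolet--Schaeffer--Soria large-deviation estimate from scratch and then to push it past the range $k < r - r^{2/3}\lambda(r)$ to which their theorem applies. The paper does something more economical. It splits the range $\bigl(5r/7 + Ar^{2/3}, r\bigr]$ into two regimes. For $k \le r - r^{2/3}\lambda(r)$ (with $\lambda(r)\to\infty$ slowly) the bound is quoted directly from the BFSS machinery via Corollary~\ref{quotecor} and Proposition~\ref{airyprop2}; no saddle-point is re-run. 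For the extreme right tail $r - r^{2/3}\lambda(r) < k \le r$, the paper makes a crude combinatorial count: any element of $\cR_{r,k}$ can be built by choosing $\rS\in\cS_k$, a family of pendant $2$-connected quadrangulations with $2(r-k)$ edges in total, and a root edge, so
\[
|\cR_{r,k}|\le |\cS_k|\cdot (2k-4)^{r-k}\cdot |\cR_{r-k}|\cdot(4r-8)\, ,
\]
using the (sub)multiplicativity $|\cR_s||\cR_t|\le|\cR_{s+t}|$ to collapse the product of pendant counts. Dividing by $|\cR_r|$ and using $r_C/r_B<1$, the factor $(r_C/r_B)^k$ gives exponential decay at rate linear in $k\asymp r$, which trivially dominates both the crude overcounting factor $(2k-4)^{r-k}\le e^{r/\log r}$ and the required cubic rate $r(k/r-5/7)^3 = \Theta(r)$. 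In other words, near $k=r$ you do not need a sharp rate function at all---the gap between the exponential growth rates of $\cS$ and $\cR$ already does the work, and this is exactly how the paper sidesteps the ``degenerate endpoint'' obstacle you flagged as the main difficulty of a uniform saddle-point treatment.

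Two smaller remarks. First, the schema you invoke is the \emph{critical} one ($H(r_H)=r_C$, called a ``map schema'' in the paper), not the subcritical one; the Airy local limit of Proposition~\ref{airyprop2} and the cubic vanishing of the rate function are precisely the signatures of criticality. Second, your step (1) reduction from ``largest simple block'' to ``simple root block'' via an $O(r)$ rerooting factor is correct and parallels the paper's factor $(4r-8)$, but the paper phrases it more simply as an overcounting construction: one constructs a superset of $\cR_{r,k}$ by freely choosing a root edge after attaching decorations, without needing to track a canonical re-rooting.
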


This proposition is a slight extension of \citep[Theorem 1]{BFSS}, which proves similar bounds but requires that $(r-k)/r^{2/3} \to \infty$. We do not reprove the entire result, but simply analyze the behaviour in the range not covered in the work of \cite{BFSS}. We use \refP{pendant2} in proving stretched exponential tail bounds for the size of the largest pendant submap in a random $2$-connected quadrangulation.

\begin{prop}\label{secondlargest}
For all $\veps\in(0,1/3)$, there exist positive constants $c_1$, $c_2$, and $c_3=c_3(\veps)$ such that for all $r\in\N$, if $\bR_r\in_u \cR_{r,s(r)}$,
\[
\p{L(\bR_r) \ge r^{2/3+\veps}}\le c_1  \exp\left(-c_2 r^{c_3}\right)~.
\]
\end{prop}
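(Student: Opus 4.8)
The plan is to reduce \refP{secondlargest} to a tail bound on the largest part of a uniformly random sequence of $2$-connected quadrangulations with prescribed length and total number of vertices, and then to control that tail by a subcritical‑schema estimate built from the generating function $R(z)=\sum_{\rho\in\cR}z^{|v(\rho)|}$.

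First I would set up the combinatorics. By \refP{decom1}, re‑rooting $Q\in\cR_{r,s(r)}$ at the $\prec$‑minimal edge of its largest simple block and applying $\varphi$ yields a pair $(S,\Theta)$ with $S\in\cS_{s(r)}$ and $\Theta\in\cR^{m}$, where $m=|e(S)|+1=2s(r)-3$; tracking vertices through the construction gives $\sum_{i=0}^{m-1}|v(\Theta_i)|=N:=r+3s(r)-6$, so $N/m=\tfrac{11}{5}+O(r^{-1/3})$, a constant strictly between $2$ and $\infty$ — this is exactly what makes the schema subcritical. Writing $R_k=|\cR_k|$ and $a_j(n)=[z^n]R(z)^j$, two counting bounds follow. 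On one hand, every pair $(S,\Theta)$ with $|v(S)|=s(r)$, $\sum|v(\Theta_i)|=N$, and $\max_i\sb(\Theta_i)<s(r)$ gives a distinct element of $\cR_{r,s(r)}$ (a simple block of $\varphi^{-1}(S,\Theta)$ that is not the root block lies inside a single decoration, so $S$ really is the largest), and the number of $\Theta$ violating the last condition is, by the crude case $t=s(r)$ of the estimate below together with $\sb\le|v|$ and a union bound over slots, a $o(1)$ fraction of $a_m(N)$; hence $|\cR_{r,s(r)}|\ge(1-o(1))\,|\cS_{s(r)}|\,a_m(N)$. On the other hand, since $L(Q)$ is the largest part of the $\varphi$‑decomposition of $Q$ after this re‑rooting, each $Q\in\cR_{r,s(r)}$ with $L(Q)\ge t$ is determined by its pair $(S,\Theta)$ together with the original root edge of $Q$ (one of $|e(Q)|=2r-4$ choices), and a union bound over the $m$ slots gives
\[
\p{L(\bR_r)\ge t}\ \le\ \frac{(2r-4)\,|\cS_{s(r)}|\cdot m\sum_{k\ge t}R_k\,a_{m-1}(N-k)}{(1-o(1))\,|\cS_{s(r)}|\cdot a_m(N)}\,.
\]

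Next I would estimate this ratio by tilting. Let $z^\ast=z^\ast(r)\in(0,\rho_R)$ solve $z^\ast R'(z^\ast)/R(z^\ast)=N/m$; since $z\mapsto zR'(z)/R(z)$ increases continuously from $2$ to $+\infty$ on $(0,\rho_R)$ (using the square‑root singularity of $R$ from \cite{BFSS}) and $N/m\to\tfrac{11}{5}\in(2,\infty)$, the point $z^\ast$ stays bounded away from $0$ and from $\rho_R$ uniformly in $r$. Trivially $a_{m-1}(N-k)(z^\ast)^{N-k}\le R(z^\ast)^{m-1}$ for all $k$, while the local limit theorem for a sum of $m$ i.i.d.\ copies of the tilted law $\P(X=k)\propto R_k(z^\ast)^k$ — which has mean exactly $N/m$ and finite variance since $z^\ast<\rho_R$ (and span $1$, as $R_2,R_4,R_5>0$) — gives $a_m(N)\ge c\,R(z^\ast)^m(z^\ast)^{-N}/\sqrt m$ for some $c>0$. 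Substituting, the factors $R(z^\ast)^m(z^\ast)^{-N}$ cancel and, using only $R_k\le C\rho_R^{-k}$ and $z^\ast/\rho_R\le\theta<1$ uniformly,
\[
\p{L(\bR_r)\ge t}\ \le\ C'\,r\cdot m^{3/2}\sum_{k\ge t}R_k(z^\ast)^k\ \le\ C''\,r^{5/2}\,\theta^{\,t}\,.
\]
Taking $t=r^{2/3+\veps}$ absorbs the polynomial prefactor into $\theta^{\,r^{2/3+\veps}}=\exp(-\log(1/\theta)\,r^{2/3+\veps})$, giving $\p{L(\bR_r)\ge r^{2/3+\veps}}\le c_1\exp(-c_2r^{2/3+\veps})$ for all large $r$ (and we enlarge $c_1$ to cover the finitely many small $r$); so the proposition holds with $c_3=2/3+\veps$.

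I expect the main obstacle to be the bookkeeping of the first paragraph rather than the analysis: verifying via the structure of $\varphi$ that a non‑root simple block of an assembled map lies entirely in one decoration (so that $\max_i\sb(\Theta_i)<s(r)$ genuinely identifies $S$ as the core), and handling the harmless re‑rooting factor cleanly. The analytic content — the trivial upper bound on $a_{m-1}(N-k)$ and the local‑limit lower bound on $a_m(N)$ at its mean — is standard once one notes that the schema is subcritical, i.e.\ that $N/m$ converges to a constant exceeding the minimal part size $2$; the only hypotheses to check are finite variance of the tilted law (immediate from $z^\ast<\rho_R$) and its lattice span.
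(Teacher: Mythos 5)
Your route is genuinely different from the paper's (the paper decomposes a map with $L(\rR)=x$ as a size-$x$ $2$-connected decoration attached to an element of $\cR_{r-x+2,s(r)}$ and then invokes \refP{pendant2}), but unfortunately your route breaks at the tilting step.

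The fatal error is the claim that $z\mapsto zR'(z)/R(z)$ \emph{increases from $2$ to $+\infty$} on $(0,\rho_R)$, ``using the square-root singularity of $R$.'' The generating function $R(z)=z^2(1+C(z))$ does not have a square-root singularity: by Table~\ref{table2}, $C$ (and hence $R$) has a singularity of exponent $3/2$, so $C(r_C)$ \emph{and} $C'(r_C)$ are both finite; only $C''$ blows up. Consequently $zR'(z)/R(z)$ stays bounded as $z\to\rho_R=r_C$, and its limit is $2 + r_C C'(r_C)/(1+C(r_C))$. Plugging in the Table~\ref{table2} expansion for $C$ gives $r_C C'(r_C)/(1+C(r_C)) = (28/135)/(28/27) = 1/5$, so
\[
\lim_{z\to\rho_R^-} \frac{zR'(z)}{R(z)} = \frac{11}{5}\, ,
\]
which is \emph{exactly} the limit of $N/m$ that you computed. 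So the saddle-point equation $z^*R'(z^*)/R(z^*)=N/m$ has no solution bounded away from $\rho_R$: as $r\to\infty$ the tilt point is forced to $\rho_R$ (or does not exist at all, depending on the sign of the $O(r^{-1/3})$ correction). The uniform bound $z^*/\rho_R\le\theta<1$ fails, and with it the whole estimate $\sum_{k\ge t}R_k(z^*)^k\le C\theta^{t}$ — which is the source of your claimed $\exp(-c_2 r^{2/3+\veps})$ tail.

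This is not incidental. The identity $\lim zR'/R = 11/5 = \lim N/m$ is precisely the statement that the composition schema $(C,B,U)$ is \emph{critical} in the BFSS sense (equivalently, $U(r_U)=r_B$). If the schema were subcritical, as your argument presupposes, the size of the simple core $\sb(\bR_r)$ would have Gaussian fluctuations of order $\sqrt r$; the Airy fluctuations of order $r^{2/3}$ in \refP{airyprop2} are the signature of criticality. In the critical regime the tilted law $\P(X=k)\propto R_k\rho_R^k$ has $R_k\rho_R^k\asymp k^{-5/2}$, hence finite mean but infinite variance, so the Gaussian local limit theorem you invoke for $a_m(N)$ is also inapplicable; one would instead need a $3/2$-stable local CLT, which is what produces the Airy density. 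Correspondingly, the true tail of $L(\bR_r)$ is only a stretched exponential with a small exponent — the paper obtains $\exp(-C'' r^{3\veps})$, not your exponential in $t=r^{2/3+\veps}$.

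To salvage something along your lines you would have to analyze the critical case, which essentially reconstructs BFSS Theorems 1, 5, 7; the paper instead bounds $|\cL_{r,x}|\le \Theta(r\cdot s(r))\,|\cR_{r-x+2,s(r)}|\,|\cR_x|$ and applies \refP{pendant2} to the factor $|\cR_{r-x+2,s(r)}|/|\cR_{r-x+2}|$, where $s(r)$ lies in the right tail of the core-size distribution for $\cR_{r-x+2}$ once $x\gtrsim r^{2/3+\veps}$. That is the step your proposal does not reproduce.

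Your combinatorial bookkeeping in the first paragraph (the identity $\sum_i|v(\Theta_i)|=r+3s(r)-6$, the fact that a non-root simple block lives inside a single decoration) is correct and consistent with (\ref{eq:edge_count0}); it is only the analytic core that fails.
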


Proofs for \refP{pendant2} and \refP{secondlargest} are given in \refS{sec:complement}.

Next we deduce a bound for $D(\bR_r)$ through extending a result by \citet{CS}. The following proposition follows straightforwardly from \citep[Proposition 4]{CS}.

\begin{prop}{\em (\cite{CS}).}\label{citediam}
 There exist positive constants $y_0$, $C_1$, and $C_2$ such that for all $y> y_0$ and $q\in\N$, if $\bQ_q \in_u \cQ_q$, 
\[
\p{\diam\left(\bQ_q\right) > y q^{1/4} } \le C_1  \exp(-C_2 y)~.
\]
\end{prop}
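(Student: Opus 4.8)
The plan is to deduce this from the upper-tail bound on the \emph{radius} of a uniform quadrangulation proved by \citet{CS}. Write $\mathrm{rad}(\bQ_q)$ for the maximal graph distance in $\bQ_q$ from the root vertex (the tail of the root edge). Chassaing and Schaeffer index rooted quadrangulations by their number of faces, and a quadrangulation with $q\ge 3$ vertices has $q-2$ faces by Euler's relation (each face has degree $4$, so $e=2f$ and $q=f+2$); since $\tfrac12 q^{1/4}\le (q-2)^{1/4}\le q^{1/4}$ for $q\ge 3$, the normalizations by $q^{1/4}$ and by $(q-2)^{1/4}$ differ only by a bounded factor, which we absorb into constants. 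Thus \citet[Proposition~4]{CS} provides positive constants $z_0,C_1',C_2'$ with $\p{\mathrm{rad}(\bQ_q)>z\,q^{1/4}}\le C_1'\exp(-C_2' z)$ for all $z\ge z_0$ and all $q\ge 3$. (Their bound is in fact superexponential in $z$, but this weaker form suffices, and the case $q\le 2$ is trivial since then $\diam(\bQ_q)\le 1$.)

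It remains only to pass from the radius to the diameter --- the content of ``follows straightforwardly''. With $v_0$ the root vertex, the triangle inequality gives, for any vertices $u,w$, $d_{\bQ_q}(u,w)\le d_{\bQ_q}(u,v_0)+d_{\bQ_q}(v_0,w)\le 2\,\mathrm{rad}(\bQ_q)$, so $\diam(\bQ_q)\le 2\,\mathrm{rad}(\bQ_q)$. Hence, for $y>2z_0$,
\[
\p{\diam(\bQ_q)>y\,q^{1/4}}\ \le\ \p{\mathrm{rad}(\bQ_q)>\tfrac{y}{2}\,q^{1/4}}\ \le\ C_1'\exp\!\left(-\tfrac{C_2'}{2}\,y\right),
\]
which is the claimed bound with $C_1=C_1'$, $C_2=C_2'/2$, and $y_0=2z_0\vee 1$.

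I do not expect any real obstacle: the probabilistic content is entirely in \citet[Proposition~4]{CS}, and the rest is the one-line estimate $\diam\le 2\,\mathrm{rad}$ together with the faces-versus-vertices bookkeeping. The only point worth noting is that the Chassaing--Schaeffer bound must be applied to the radius measured \emph{from the root vertex}, i.e.\ to a single maximum over vertices, so that no union bound over pairs of vertices enters --- such a bound would carry a polynomial-in-$q$ prefactor, which a tail estimate of the form $\exp(-C_2 y)$ (with $y$ independent of $q$) cannot absorb. Since their result is exactly of this shape, the reduction goes through without loss.
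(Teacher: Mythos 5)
Your proof is correct and takes exactly the route the paper has in mind (the paper gives no explicit argument, stating only that the proposition ``follows straightforwardly from \cite[Proposition 4]{CS}''). The reduction $\diam \le 2\,\mathrm{rad}$, the face-versus-vertex bookkeeping (in fact $q^{1/4}\ge (q-2)^{1/4}$, so the inequality even runs in the convenient direction without needing to adjust constants), and the replacement $C_2 = C_2'/2$, $y_0 = 2z_0\vee 1$ are all correct. Your closing remark — that one must use the single-root radius rather than a union bound over pairs, since a polynomial-in-$q$ prefactor would be incompatible with a bound of the form $C_1\exp(-C_2 y)$ uniform in $q$ — correctly identifies the one place a naive argument could go wrong and explains why it doesn't here.
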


This bound is for connected quadrangulations rather than $2$-connected ones. However, at the cost of polynomial corrections, we are able to transfer the result to the latter family of quadrangulations, as shown in \refS{sec:diambound}. This in particular yields the following bound. 

\begin{prop}\label{diam}
Let $\bR_r\in_u \cR_{r,s(r)}$, then there exist positive constants $C_1$, $C_2$, and $C_3$ such that 
\[
\p{D(\bR_r) \ge  r^{5/24}}\le C_1 \exp\left(-C_2  r^{C_3}\right)~.
\]
\end{prop}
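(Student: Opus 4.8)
The plan is to combine the size cap for pendant submaps from \refP{secondlargest} with the diameter tail bound for $2$-connected quadrangulations obtained in \refS{sec:diambound} (by transferring \refP{citediam}), and then to take a union bound over the $2s(r)-3$ pendant submaps $\Theta_0,\dots,\Theta_{2s(r)-4}$ appearing in the decomposition $\varphi(\bR_r')=(\bS_r,\Theta)$ from the proof sketch of \refT{thm2}. First I would fix $\veps\in(0,1/6)$, put $m=\lceil r^{2/3+\veps}\rceil$, and split
\[
\p{D(\bR_r)\ge r^{5/24}}\ \le\ \p{L(\bR_r)\ge m}\ +\ \p{D(\bR_r)\ge r^{5/24},\ L(\bR_r)<m}.
\]
Since $\veps<1/6<1/3$, the first term is at most $c_1\exp(-c_2 r^{c_3})$ by \refP{secondlargest}, which already has the required form.

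For the second term I would condition on the vector of sizes $(n_i)_{0\le i\le 2s(r)-4}$, $n_i:=|v(\Theta_i)|$. On the event $\cE_r:=\{L(\bR_r)<m\}$ one has $\max_i n_i<m<s(r)$ for $r$ large (as $s(r)\sim 5r/7$ and $\veps<1/3$), so every simple block inside a decoration $\Theta_i$ has fewer than $s(r)$ vertices; hence $\bS_r$ is genuinely the largest simple block of $\bR_r$, and the conditioning $\{\sb(\bR_r)=s(r)\}$ imposes no constraint on the decorations beyond fixing their sizes. Since $\varphi$ (\refP{decom1}) is a product-type bijection, it follows that on $\cE_r$, conditionally on $(n_i)$, the $\Theta_i$ are independent with $\Theta_i\in_u\cR_{n_i}$.

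I then invoke the diameter estimate for uniform $2$-connected quadrangulations established in \refS{sec:diambound}: there are constants $a,C_1',C_2',y_0>0$ such that, for $\bR_n'\in_u\cR_n$, $\p{\diam(\bR_n')>y\,n^{1/4}}\le C_1' n^{a}\exp(-C_2' y)$ whenever $y>y_0$. Taking $y=r^{5/24}n_i^{-1/4}$ and using $n_i<m\le 2r^{2/3+\veps}$ gives $y\ge\tfrac12 r^{1/24-\veps/4}$, which exceeds $y_0$ for $r$ large and tends to $\infty$ precisely because $5/24>\tfrac14\cdot\tfrac23$ and $\veps<1/6$; hence
\[
\p{\diam(\Theta_i)\ge r^{5/24}\mid (n_i)}\ \le\ C_1' n_i^{a}\exp\!\big(-(C_2'/2)\,r^{1/24-\veps/4}\big)\ \le\ C_1' r^{a}\exp\!\big(-(C_2'/2)\,r^{1/24-\veps/4}\big)
\]
for all large $r$. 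A union bound over the $2s(r)-3=O(r)$ indices, followed by averaging over $(n_i)$ on $\cE_r$, yields
\[
\p{D(\bR_r)\ge r^{5/24},\ \cE_r}\ \le\ O(r^{a+1})\exp\!\big(-(C_2'/2)\,r^{1/24-\veps/4}\big),
\]
and absorbing the polynomial prefactor into the exponential gives a bound of the form $C_1''\exp(-C_2'' r^{C_3})$ with any fixed $C_3\in(0,\,1/24-\veps/4)$. Adding the two contributions and enlarging constants completes the proof.

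The substantive inputs are external to this argument: the size cap \refP{secondlargest} (which rests on \refP{pendant2}) and the transfer of the Chassaing--Schaeffer bound \refP{citediam} to $2$-connected quadrangulations carried out in \refS{sec:diambound}. Granting those, the only points needing care here are (i) checking that conditioning on $\{\sb(\bR_r)=s(r)\}$ does not spoil the conditional independence and uniformity of the decorations $\Theta_i$ (which is why I restrict to the event $\cE_r$), and (ii) the exponent bookkeeping: the value $5/24$ is chosen to lie strictly between $\tfrac14\cdot\tfrac23$ and $\tfrac14$, so that dividing the target diameter $r^{5/24}$ by $n_i^{1/4}\le m^{1/4}=O(r^{1/6+\veps/4})$ leaves a positive power of $r$ in the exponent (while $5/24<1/4$ ensures the bound is still useful after the eventual scaling by $r^{-1/4}$).
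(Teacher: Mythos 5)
Your argument is correct and follows essentially the same route as the paper: decompose via $\varphi$ into $\bS_r$ and decorations $\Theta_i$, cap $|v(\Theta_i)|$ at $r^{2/3+\veps}$ using \refP{secondlargest}, apply the transferred Chassaing--Schaeffer diameter bound (Lemma~\ref{2condiam}) to each decoration conditionally on its size, and union-bound over the $O(s(r))$ decorations. The only cosmetic differences are your choice of $\veps\in(0,1/6)$ versus the paper's $\veps\in(0,1/7)$ (both keep $1/24-\veps/4>0$), and that you extract the $\{L(\bR_r)\ge m\}$ event once up front rather than folding it into the per-$i$ sum; your extra care about conditional uniformity on $\cE_r$ is sound but only marginal uniformity given $|v(\Theta_i)|=k<s(r)$ is actually needed for the union bound, and that is exactly what the paper invokes.
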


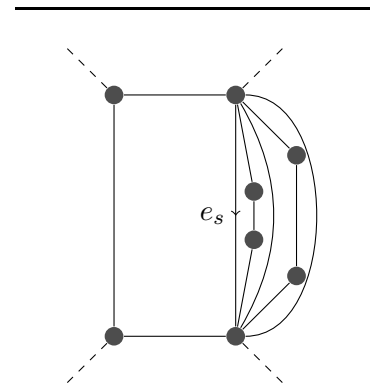
\begin{wrapfigure}[18]{R}{.31\textwidth}
\rule[5pt]{.31\textwidth}{0.5pt}
\centering
\begin{tikzpicture}[scale=0.8]
	\tikzstyle{main}=[circle,inner sep=0pt,minimum size=7pt,fill=black!70,font=\small];
	\begin{scope}[decoration={
    markings,
    mark=at position 0.5 with {\arrow{>}}}
    ]
	\node[main] (a) at (0,4) {} ;
	\node[main] (b) at (0,0) {};
	\node[main] (c) at (-2,0) {};
	\node[main] (d) at (-2,4) {};
	\node (e) at (1,5) {};
	\node (h) at (1,-1) {};
	\node (f) at (-3,-1) {};
	\node (g) at (-3,5) {};
	
	\node[main] (1) at (0.3,2.4) {};
	\node[main] (2) at (0.3,1.6) {};
	\node[main] (21) at (1,3) {};
	\node[main] (22) at (1,1) {};
	
    \draw[postaction={decorate}]  (a) to node[left]{${e_s}$} (b);	
    \draw  (b) edge (c);	
    \draw  (d) edge (c);	
    \draw  (a) edge (d);	
    \draw (a) edge [dashed] (e);
    \draw (b) edge [dashed] (h);
    \draw (c) edge [dashed] (f);
    \draw (d) edge [dashed] (g);
    
    \draw  (a) edge (1);	    
    \draw  (1) edge (2);	
    \draw  (2) edge (b);	
    \draw  (a) edge [bend left=30] (b);	
    \draw  (a) edge (21);	    
    \draw  (21) edge (22);	
    \draw  (22) edge (b);	
    \draw  (a) edge [bend left=90] (b);	
\end{scope}
\end{tikzpicture}
\caption{\small Parallel alternating $1$-paths and $3$-paths attached to the root edge ${e_s}$.}
\label{fig:pointmass}
\rule[8pt]{.31\textwidth}{0.5pt} 
\end{wrapfigure}

The above results immediately give rise to Gromov-Hausdorff convergence of $\left( \bR_r,\bS_r\right)$ after rescaling, as shown in \refP{ghprop2} in the end of \refS{sec:diambound}. However, to deduce GHP convergence, the above propositions are insufficient, as they do not guarantee that the uniform measures on $v(\bR_r)$ and $v(\bS_r)$ are close in the Prokhorov sense. Here is an example of the sort of issue that may {\em a priori} still occur. For all $s\in\N$, let $\bS_s\in_u\cS_s$ have root edge ${e_s}$. Let $P_s$ be the quadrangulation of a $2$-gon with $2\lfloor s/5\rfloor+2$ vertices composed of parallel alternating $1$-paths and $3$-paths, and write ${e'_s}$ for one of the boundary edges of $P_s$. Then identify ${e_s}$ with ${e'_s}$, embed $P_s$ in the face of $\bS_s$ to the left of ${e_s}$, and write $\bR_s'$ for the resulting quadrangulation; see \refF{fig:pointmass}. Recall that $\bM=(M,d,\mu)$ is the Brownian map. 
Then it is not hard to see that $\left(\bR_s',\bS_s\right)$ converges after rescaling to $(\bM',\bM)$, where $\bM'=(M,d,\mu')$ has the geometry of the Brownian map but has mass measure $\frac57\mu+\frac27 \delta_\rho$, where $\rho$ is a point of $M$ with law $\mu$.

To prevent the masses of ``pendant submaps" from concentrating on small regions in this manner, we use that they attach to exchangeable random locations on the simple block and that each of them has asymptotically negligible size. The first follows from the details of the construction of a $2$-connected quadrangulation from its simple root block, explained in \refS{sec:mapandquad}; the second is a consequence of \refP{secondlargest}.

In order to show that the facts from the preceding paragraph suffice to imply joint convergence, we prove a general result on the preservation of Gromov-Hausdorff-Prohkorov convergence under small random modifications; our result relies on results of Aldous on concentration for partial sums of exchangeable random variables. Details for this part of the proof appear in Sections~\ref{sec:exch_measures} and~\ref{sec:mass_proj}.

We conclude the proof sketch by explaining how we strengthen \refP{mainprop2} to prove \refT{thm2}.
First, with $\bQ_q\in_u\cQ_{q,r(q),s(q)}$, we show that $\rR(\bQ_q)$ contains $\rS(\bQ_q)$ with high probability. The joint convergence of the second and third coordinates in \refT{thm2} then follows from \refP{mainprop2}. 

The convergence of the first coordinate does not follow from the existing result by \citet{LG} or \citet{Mi}, because $\bQ_q$ here is not uniformly distributed over $\cQ_q$, but conditioned on $\b(\bQ_q) = r(q)$ and $\sb(\bQ_q)=s(q)$. To deal with this, we require versions of Propositions \ref{secondlargest} and \ref{diam} that apply to uniform quadrangulations sampled from $\cQ_q$ and $\cQ_{q,r(q),s(r(q))}$. These follows straightforwardly once we show that with high probability, $\rS(\rR(\bQ_q))=\rS(\bQ_q)$. We postpone the details.

A reprise of the argument for \refP{mainprop2} then shows that if $\bQ_q\in_u \cQ_{q,r(q),s(q)}$, then $(\bQ_q,\rR(\bQ_q))\convdist (\bM,\bM)$ as $q\to\infty$. Since we also know $(\rR(\bQ_q), \rS(\bQ_q))\convdist (\bM,\bM)$ as $q\to\infty$, \refT{thm2} follows immediately.

\section{Preliminaries}\label{sec:pre}

\subsection{Hausdorff and Prokhorov distances}
\addtocontents{toc}{\SkipTocEntry}

Let $(V,d)$ be a compact metric space with its Borel $\sigma$-algebra $\cB(V)$. Given $A\subset V$, the {\em $\veps$-neighborhood} of $A$ is defined as 
\[
A^\veps = \left\{x\in V: \exists y\in A, d(x,y)<\veps\right\}~.
\]
The {\em Hausdorff distance} $d_\rH$ between two non-empty subsets $X,Y$ of $(V,d)$ is defined as
\[
d_\rH(X,Y) = \inf\left\{\veps>0:X\subset Y^\veps,Y\subset X^\veps\right\}~.
\]

Denote by $\cP(V)$ the collection of all probability measures on the measurable space $(V,\cB(V))$. The {\em Prokhorov distance} $d_\rP : \cP(V)^2 \to [0,\infty)$ between two Borel probability measures $\mu$ and $\nu$ on $V$ is given by
\[
d_\rP(\mu,\nu) = \inf\left\{\veps>0: \mu(A)\le \nu(A^\veps) + \veps \mbox{ and } \nu(A)\le \mu(A^\veps)+\veps,\forall A\in \cB(V)\right\}~.
\]

\subsection{Gromov-Hausdorff(-Prokhorov) distance}\label{sec:ghpintro}
\addtocontents{toc}{\SkipTocEntry}

We refer the reader to \cite{BBI} and \cite{Mi,LG} for more details on the Gromov-Hausdorff and Gromov-Hausdorff-Prokhorov distances and the topologies they induce. 
Let $(V,d)$ and $(V',d')$ be two compact metric spaces. A {\em correspondence} between $V$ and $V'$ is a set $C\subset V\times V'$ such that for every $x\in V$, there is $x'\in V'$ with $(x,x')\in C$, and vice versa. We write $C(V,V')$ for the set of correspondences between $V$ and $V'$. The {\em distortion} of any set $C\subset V\times V'$ with respect to $d$ and $d'$ is given by 
\[
\dis\left(C;d,d'\right) = \sup\left\{|d(x,y) - d'(x',y')|: (x,x')\in C,  (y,y')\in C\right\}~.
\]
The {\em Gromov-Hausdorff distance} between $(V,d)$ and $(V',d')$ is defined as
\[
\dgh\left((V,d),(V',d')\right) = \inf\left\{\veps>0: \exists C\in C(V,V'),\dis(C;d,d')\leq 2\veps\right\}~.
\]

Next, suppose $\mu$ and $\mu'$ are non-negative Borel measures on $(V,d)$ and $(V',d')$, respectively.
The {\em Gromov-Hausdorff-Prokhorov} distance between $\rV=(V,d,\mu)$ and $\rV'=(V',d',\mu')$ is given by
\[
\dghp(\rV,\rV') = \inf\left[ \max\left\{ d_\rH(\phi(V),\phi'(V')) , d_\rP(\phi_*\mu,\phi'_* \mu') \right\}\right]
\]
where the infimum is taken over all isometries $\phi,\phi'$ from $(V,d),(V',d')$ into a metric space $(Z,\delta)$ (see \citet[Section 6.2]{Mi09}).
Writing $\K$ for the set of all isometry classes of compact measured metric spaces, $(\K,\dghp)$ is a Polish space; when we refer to GHP convergence we mean convergence in this space. 

\subsection{The Airy distribution}
\addtocontents{toc}{\SkipTocEntry}

The {\em Airy distribution} is the probability distribution whose density is
\begin{align*}
\cA(x) =& 2 e^{-2x^3/3} \left(x \mbox{Ai}(x^2)-\mbox{Ai}'(x^2)\right)\\
=&~ \frac 1 {\pi x} \sum_{n\in\N} (-x 3^{2/3})^n \frac{\Gamma(2n/3+1)}{n!} \sin (-2n\pi/3)~,
\end{align*}
where the Airy function $\mbox{Ai}$ is given by
\begin{align*}
\mbox{Ai}(z)=& \frac 1 {2\pi} \int^\infty_{-\infty} e^{i(zt+t^3/3)} dt\\
=&~ \frac 1 {\pi 3^{2/3}} \sum_{n\in\N_{\ge0}} (3^{1/3} z)^n  \frac{\Gamma((n+1)/3)}{n!} \sin (2(n+1)\pi/3)~.
\end{align*}

\section{Composition schemata}
\label{sec:schema}
Let $F(z) = \sum_{n \in\N_{\ge0}} F_n z^n$ be a generating function (i.e. an analytic function with nonnegative integer coefficients) with positive radius of convergence $r=r_F$. We say $F$ is {\em singular with exponent $3/2$} if the following properties hold. 
\begin{itemize}
\item There exists $\eps > 0$ such that $F$ is continuable in $\Delta = \{z: |z| < r+\eps, z \not\in [r,r+\eps)\}$.
\item There exist positive constants $a=a_F,b=b_F,c=c_F$ such that $F(z) = a-b(1-z/r) + c(1-z/r)^{3/2} + O((1-z/r)^2)$ as $z \to r$ in $\Delta$. 
\end{itemize}

Gao and Wormald \cite{GW} derived asymptotics for the coefficients of $F$ under the above conditions. 
\begin{prop}[\cite{GW}, Theorem 1 (iii)]\label{expansionquote}
Let $F$ be singular with exponent $3/2$, let $r$ and $c$ be as above. Then 
\[
F_n \sim \frac{3c}{4\pi^{1/2}}\frac{r^{-n}}{n^{5/2}}. 
\]
\end{prop}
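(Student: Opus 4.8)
The plan is to prove this by singularity analysis, in the style of Flajolet and Odlyzko. Since $F$ is singular with exponent $3/2$, write
\[
F(z) = a - b(1 - z/r) + c\,(1-z/r)^{3/2} + R(z), \qquad R(z) = O\big((1-z/r)^2\big) \text{ as } z \to r \text{ in } \Delta~.
\]
The linear part $a - b(1-z/r)$ is entire, so $[z^n]\big(a - b(1-z/r)\big) = 0$ for all $n \ge 2$ and contributes nothing to the asymptotics. Hence $F_n = c\,[z^n](1-z/r)^{3/2} + [z^n]R(z)$ for $n \ge 2$, and it remains to estimate the two terms on the right.

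For the main term, I would rescale and use the standard expansion of generalized binomial coefficients:
\[
[z^n](1-z/r)^{3/2} = r^{-n}\,[z^n](1-z)^{3/2} = r^{-n}\,\frac{\Gamma(n - 3/2)}{\Gamma(-3/2)\,\Gamma(n+1)} \sim \frac{r^{-n}\, n^{-5/2}}{\Gamma(-3/2)}~,
\]
where the last step is the asymptotic $\Gamma(n+\alpha)/\Gamma(n+\beta) \sim n^{\alpha-\beta}$ with $\alpha = -3/2$, $\beta = 1$. Since $\Gamma(-3/2) = \tfrac{4}{3}\sqrt{\pi}$, this gives $[z^n](1-z/r)^{3/2} \sim \tfrac{3}{4\sqrt{\pi}}\,r^{-n}n^{-5/2}$, so $c\,[z^n](1-z/r)^{3/2} \sim \tfrac{3c}{4\sqrt{\pi}}\,r^{-n}n^{-5/2}$, which is exactly the claimed leading order.

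For the error term, I would invoke the transfer theorem for $O$-expansions: because the first bullet of the definition of ``singular with exponent $3/2$'' guarantees that $R$ is analytic in the $\Delta$-domain $\{|z| < r+\eps,\ z \notin [r,r+\eps)\}$, and $R(z) = O((1-z/r)^2)$ there, a Cauchy integral of $R$ over a Hankel-type contour that follows the boundary of $\Delta$ and pinches around $z = r$ at scale $1/n$ yields $[z^n]R(z) = O\big(r^{-n}n^{-3}\big)$. Since $n^{-3} = o(n^{-5/2})$, the error term is negligible against the main term, and summing the two estimates completes the proof.

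The main obstacle is the transfer-theorem estimate for $[z^n]R(z)$: it genuinely uses the full $\Delta$-analyticity hypothesis (not merely the local expansion at $z=r$), and the bound follows from a somewhat delicate contour-integral argument—splitting the Hankel contour into the small arc near $z=r$, the two rectilinear segments, and the outer arc, and bounding each piece using $|R(z)| = O(|1-z/r|^2)$ together with the bound $|z|^{-n} \le r^{-n} e^{-\Omega(1)}$ on the outer arc. If one is willing to cite the Flajolet–Odlyzko transfer theorems directly, this step is immediate and the proof reduces to the elementary computation in the two preceding paragraphs.
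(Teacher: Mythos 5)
The paper does not actually prove this proposition; it simply quotes it from Gao and Wormald~\cite{GW}, so there is no ``paper's proof'' to compare against. Your proof is correct and is the standard singularity-analysis argument in the Flajolet--Odlyzko framework, which is almost certainly how the cited result is established. A few checks: the linear part $(a-b)+(b/r)z$ is a polynomial of degree $1$, so it indeed vanishes under $[z^n]$ for $n\ge 2$; the formula $[z^n](1-z)^{3/2}=\frac{\Gamma(n-3/2)}{\Gamma(-3/2)\,\Gamma(n+1)}$ and the gamma-ratio asymptotic $\Gamma(n-3/2)/\Gamma(n+1)\sim n^{-5/2}$ are standard; and $\Gamma(-3/2)=\frac{4}{3}\sqrt{\pi}$ is computed correctly via two applications of the recursion $\Gamma(z)=\Gamma(z+1)/z$. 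The transfer step --- that $\Delta$-analyticity plus $R(z)=O((1-z/r)^2)$ gives $[z^n]R(z)=O(r^{-n}n^{-3})$ --- is precisely the $O$-transfer theorem of Flajolet--Odlyzko (see~\cite[Theorem~VI.3]{FS}), and you are right that this is where the $\Delta$-domain assumption is really used; merely having an asymptotic expansion along the real axis near $z=r$ would not suffice. Citing that transfer theorem, as you propose, is the correct and customary way to discharge the step. The only thing worth flagging is a matter of framing: you are supplying a proof for a result the authors treat as a black box, which is a reasonable thing to do, but note that Gao and Wormald's own Theorem~1 is phrased for a somewhat more specific class of (algebraic/Lagrangean) generating functions, whereas your argument applies to any $F$ satisfying the paper's definition of ``singular with exponent $3/2$''; in that sense your proof is both cleaner and slightly more general.
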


Next, let $C$ and $H$ be generating functions with positive coefficients, and define a bivariate generating function $M$ by $M(z,u)=C(uH(z))$; \citet{BFSS} call this a {\em composition schema}. We generically write $C_k = [z^k] C(z)$ and $M_n = [z^n] M(z,1)$, and for $n \in\N$ let $X_n$ be a real random variable with law given by 
\[
\p{X_n=k} = \frac{C_k}{M_n} [z^n] H(z)^k. 
\]
We quote from \cite{BFSS}:
\begin{quote}
Combinatorially, this corresponds to a composition $\mathcal{M}=\mathcal{C} \circ \mathcal{H}$ between classes of [rooted] objects, where objects of type $\mathcal{H}$ are substituted freely at individual ``atoms'' (e.e., nodes, edges, or faces) of elements of $\mathcal{C}$... $[z^n u^k] M(z,u)$ gives the number of $\mathcal{M}$-objects of total size $n$ whose $\mathcal{C}$-component (the ``core'') has size $k$, and $X_n$ is the corresponding random variable describing core-size in this general context. 
\end{quote}
More precisely, $X_n$ is the law of the size of the $\mathcal{C}$-component containing the root, in an object chosen uniformly at random from among all $\mathcal{M}$-objects of size $n$. The connection with the bijections for quadrangulations described in \refS{sec:mapandquad} should be clear. We say the triple $(M,C,H)$ is a {\em map schema} if $C$ and $H$ are both singular with exponent $3/2$ and 
additionally $H(r_H)=r_C$.\footnote{In \cite{BFSS}, this is called a {\em critical composition schema of singular type $(\frac{3}{2} \circ \frac{3}{2})$}. We shorten this to ``map schema'' as such schemata seem to primarily arise in the study of maps.}  The following results are all from \cite{BFSS}.
\begin{prop}[\cite{BFSS}, Theorems 1 and 5]\label{quoteprop}
Suppose $(M,C,H)$ is a map schema with
\[
C(z) = c_0 - c_1 (1- z/r_C) + c_{3/2} (1-z/r_C)^{3/2} + O\left( (1-z/r_C)^2 \right)~,
\]
\[
H(z) = h_0 - h_1 (1- z/r_H) + h_{3/2} (1-z/r_H)^{3/2} + O\left( (1-z/r_H)^2 \right)~,
\]
the expansions for $C(z)$ and $H(z)$ valid in some neighbourhoods of $r_C$ and of $r_H$, respectively. 
Let $\alpha=\alpha_{(M,C,H)}, \beta=\beta_{(M,C,H)}$ and $\gamma=\gamma_{(M,C,H)}$ be defined by 
\[
\alpha = \frac{c_1 h_{3/2} h_0^{1/2} + c_{3/2} h_1^{3/2}}{h_0^{3/2}}, \beta=\frac{h_1^{ 5/3}}{(3 h_{3/2})^{2/3} h_0}, \gamma = \beta  \cdot \frac{c_{3/2}h_1^{3/2}}{\alpha \cdot h_0^{3/2}}\, .
\]
Then 
\[ 
[z^n] M(z,1) \sim \frac{3\alpha }{4\pi^{1/2}}\frac{r_H^{-n}}{n^{5/2}}\, . 
\] 
Furthermore, writing $\alpha_0 = \alpha_{0,(M,C,H)}= h_0/h_1$, for any $A > 0$, 
\begin{equation}\label{eq:central_bound}
\lim_{n \to \infty} \sup_{x \in [-A,A]} \left\vert n^{2/3} \p{X_n = \lfloor \alpha_0 n+xn^{2/3}\rfloor} - \gamma \mathcal{A}(\beta x)\right\vert = 0\, .
\end{equation}
Finally, there exist continuous functions $f:[\alpha_0,1] \to (0,\infty)$ and $g:[\alpha_0,1] \to (0,\infty)$ such that for any function $\lambda:\N \to \N$ with $\lambda(n) \to \infty$, 
\begin{equation}\label{eq:right_bound}
\p{X_n=k} =(1+o(1)) f(k/n)\frac{(k/n - \alpha_0)^{1/2}}{n^{1/2}(1-k/n)^{3/2}} e^{-n(k/n-\alpha_0)^3 g(k/n)}, 
\end{equation}
the preceding asymptotic holding uniformly in $\alpha_0 n + n^{2/3} \lambda(n) < k < n-n^{2/3} \lambda(n)$. 
\end{prop}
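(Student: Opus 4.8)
Since \refP{quoteprop} is quoted from \cite{BFSS}, I only outline the singularity-analysis strategy I would follow; the statement has three parts, each a separate computation. \textbf{The growth of $[z^n]M(z,1)$.} The plan is to show that $M(\cdot,1)=C\circ H$ is itself singular with exponent $3/2$ and then to apply \refP{expansionquote}. Put $t=1-z/r_H$; using the singular expansion of $H$ at $r_H$ together with the map-schema identity $h_0=H(r_H)=r_C$ gives $1-H(z)/r_C=(h_1/h_0)\,t-(h_{3/2}/h_0)\,t^{3/2}+O(t^2)$, and substituting this into the expansion of $C$ and collecting powers of $t$ — the $c_{3/2}(1-H/r_C)^{3/2}$ term contributing $c_{3/2}(h_1/h_0)^{3/2}t^{3/2}$, the linear term of $C$ contributing $c_1(h_{3/2}/h_0)t^{3/2}$, and the quadratic term of $C$ only $O(t^2)$ — produces
\[
M(z,1)=c_0-c_1\tfrac{h_1}{h_0}\,t+\alpha\,t^{3/2}+O(t^2)
\]
with $\alpha$ exactly as in the statement. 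After checking that $H$ maps a suitable $\Delta$-domain at $r_H$ into a $\Delta$-domain at $r_C$ on which $C$ is continuable, \refP{expansionquote} yields $[z^n]M(z,1)\sim\frac{3\alpha}{4\pi^{1/2}}r_H^{-n}n^{-5/2}$.

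\textbf{The Airy local limit \eqref{eq:central_bound}.} Here I would start from the exact identity $\p{X_n=k}=C_k\,[z^n]H(z)^k/M_n$; the factors $C_k$ and $M_n$ are supplied by \refP{expansionquote} and the first part, so the crux is a \emph{uniform} estimate of $[z^n]H(z)^k$ for $k=\lfloor\alpha_0 n+xn^{2/3}\rfloor$ with $\alpha_0=h_0/h_1$ and $x\in[-A,A]$. Writing $[z^n]H(z)^k=\frac{1}{2\pi i}\oint\exp\!\big(k\log H(z)-n\log z\big)\frac{dz}{z}$ and expanding the exponent in $t$ gives $k\log h_0-n\log r_H+\big(n-k\tfrac{h_1}{h_0}\big)t+k\tfrac{h_{3/2}}{h_0}t^{3/2}+O(nt^2)$; on this window the coefficient of $t$ equals $-x(h_1/h_0)n^{2/3}+o(n^{2/3})$, so the rescaling $t=\tau n^{-2/3}$ makes the exponent (net of its constant part) converge to $-x(h_1/h_0)\tau+(h_{3/2}/h_1)\tau^{3/2}$, and deforming the contour around the branch cut at $r_H$ turns the integral into an Airy-type integral. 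Multiplying the three ingredients, all exponential factors cancel — here $h_0^k=r_C^k$ is essential — and matching the remaining algebraic prefactors with the standard integral representation of $\cA$ reproduces $n^{-2/3}\gamma\cA(\beta x)$, uniformly in $x$ because every error term above is uniform in $x$.

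\textbf{The right tail \eqref{eq:right_bound}.} For $\rho=k/n$ bounded away from $\alpha_0$ I would instead evaluate $[z^n]H(z)^k$ by a classical saddle-point argument: the saddle $z_k\in(0,r_H)$ solving $\rho\,zH'(z)/H(z)=1$ now lies at positive distance from $r_H$, and a Gaussian approximation there gives $[z^n]H(z)^k$ up to a prefactor of order $(n\sigma_k^2)^{-1/2}$ times $e^{-nI(\rho)}$, where $I(\rho)=\log(z_k/r_H)+\rho\log(h_0/H(z_k))$. The envelope identity forces $I(\alpha_0)=I'(\alpha_0)=0$; because $H''(z)$ blows up like $(1-z/r_H)^{-1/2}$ at $r_H$ one finds $1-z_k/r_H$ of order $(\rho-\alpha_0)^2$, hence $I''(\alpha_0)=0$ and $I(\rho)=(\rho-\alpha_0)^3 g(\rho)(1+o(1))$ with $g$ continuous and positive, while the same expansion makes $\sigma_k^2$ of order $(\rho-\alpha_0)^{-1}$, which is the source of the factor $(k/n-\alpha_0)^{1/2}n^{-1/2}$. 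Combining this with the $k^{-5/2}$ from $C_k$ and the $M_n$ asymptotics, and absorbing the remaining smooth dependence — including the $(1-k/n)^{-3/2}$ coming from the behaviour of the saddle as $\rho\to1$ — into a continuous positive function $f$, gives \eqref{eq:right_bound}, uniformly over the stated range by the uniformity of the saddle-point expansion.

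\textbf{Main obstacle.} The delicate step is the analysis of $[z^n]H(z)^k$ at the transition, i.e.\ when the saddle sits within $O(n^{-2/3})$ of the branch point $r_H$ and the linear and $3/2$ terms of the exponent balance: extracting the Airy function with the correct constants $\beta,\gamma$ — and, crucially, \emph{uniformly} in $x$ — requires a careful deformation of the integration contour through the singularity rather than a naive steepest-descent estimate, together with error bounds that glue continuously onto \eqref{eq:right_bound}. This ``transfer through a moving $3/2$-singularity'' is the technical heart of \cite{BFSS}.
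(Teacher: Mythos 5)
This proposition is stated in the paper purely as a quotation of \cite{BFSS} (Theorems~1 and~5); the paper itself contains no proof of it. Your outline is a faithful summary of the singularity-analysis machinery that BFSS use: part one is the standard composition of $\Delta$-expansions followed by Proposition~\ref{expansionquote} (and your computation of the $t^{3/2}$ coefficient correctly reproduces the formula for $\alpha$); part two is the contour-integral/Hankel analysis of $[z^n]H(z)^k$ with the $n^{-2/3}$ rescaling yielding the Airy kernel; part three is the saddle-point treatment in the large-deviation regime, with the cubic behaviour of the rate function and the $(\rho-\alpha_0)^{-1}$ variance blow-up correctly identified as the source of the stretched-exponential and the $(k/n-\alpha_0)^{1/2}n^{-1/2}$ prefactor. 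You have also correctly pinpointed the technical crux — uniform contour deformation through the coalescing saddle and branch point — as the part that genuinely requires the detailed work of \cite{BFSS}. Since the paper defers entirely to the reference, there is no divergence in approach to report; your sketch and the paper's citation point to the same underlying argument.
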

\noindent {\bf Remark.} In \cite{BFSS}, schema of the form $M(z,u) = C(uH(z))+D(z)$ are also considered. Replacing $M(z,u)$ by $M(z,u)-D(z)$ turns this into a compositional schema; if the latter is a map schema then Proposition~\ref{quoteprop} applies. The equation involving $D$ is convenient when considering map families in which the core may have size zero; such families should be counted by $[u^0]M(z,u)$, which is identically zero in $C(uH(z))$. 

\begin{cor}\label{quotecor}
Suppose $(M,C,H)$ is a map schema, and let $\alpha_0=\alpha_{0,(M,C,H)}$, $f$ and $g$ be as in \refP{quoteprop}. Then for any function $\lambda:\mathbb{N} \to \mathbb{N}$ with $\lambda(n) \to \infty$ and any $a > 0$, 
\[
\p{X_n = k} = \Theta(1) \cdot \frac{(k/n - \alpha_0)^{1/2}}{n^{1/2}(1-k/n)^{3/2}} e^{-n(k/n-\alpha_0)^3 g(k/n)}\, ,
\]
uniformly over integers $k \in [\alpha_0 n + a n^{2/3},n-\lambda(n)n^{2/3})$. 
\end{cor}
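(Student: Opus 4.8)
The plan is to derive Corollary~\ref{quotecor} directly from equation~\eqref{eq:right_bound} of Proposition~\ref{quoteprop}, treating it essentially as a two-regime patching argument. The statement of \eqref{eq:right_bound} already handles the range $\alpha_0 n + n^{2/3}\lambda(n) < k < n - n^{2/3}\lambda(n)$ for \emph{any} $\lambda(n)\to\infty$, giving the asymptotic with the continuous, strictly positive prefactor $f(k/n)$; since $f$ is continuous on the compact set $[\alpha_0,1]$ it is bounded above and below by positive constants, so on that range the $(1+o(1))f(k/n)$ factor is $\Theta(1)$, which is exactly what the corollary claims. So the only work is to extend the lower end of the range from $\alpha_0 n + n^{2/3}\lambda(n)$ (with $\lambda\to\infty$) down to $\alpha_0 n + a n^{2/3}$ for a \emph{fixed} constant $a>0$, and to extend the upper end from $n - n^{2/3}\lambda(n)$ up to $n - \lambda(n) n^{2/3}$ --- but the upper ends already match, so really the one thing to do is the lower end.

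First I would handle the near-central regime $k\in[\alpha_0 n + a n^{2/3}, \alpha_0 n + n^{2/3}\lambda(n)]$, choosing $\lambda(n)\to\infty$ slowly (e.g. $\lambda(n)=\log n$). Here I would invoke the local central limit estimate~\eqref{eq:central_bound}: writing $k = \alpha_0 n + x_k n^{2/3}$ with $x_k\in[a,\lambda(n)]$, we have $n^{2/3}\p{X_n=k} = \gamma\cA(\beta x_k) + o(1)$ uniformly on any fixed compact $x$-window, and one upgrades this to the growing window $[a,\lambda(n)]$ by a standard diagonal argument (the error in \eqref{eq:central_bound} over $[-A,A]$ tends to zero for each fixed $A$, so one can let $A=A(n)\to\infty$ slowly enough that the error still vanishes). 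The point is then to check that on this range the claimed formula $\frac{(k/n-\alpha_0)^{1/2}}{n^{1/2}(1-k/n)^{3/2}} e^{-n(k/n-\alpha_0)^3 g(k/n)}$ is itself $\Theta(n^{-2/3})$: indeed $k/n - \alpha_0 = x_k n^{-1/3}\in[a n^{-1/3}, \lambda(n)n^{-1/3}]$, so $(k/n-\alpha_0)^{1/2}/n^{1/2} = x_k^{1/2} n^{-2/3}$, the factor $(1-k/n)^{-3/2}$ is bounded (since $k/n$ stays near $\alpha_0 < 1$), and the exponent is $-n(x_k n^{-1/3})^3 g(k/n) = -x_k^3 g(k/n)$, which is $O(\lambda(n)^3)$ --- \emph{not} bounded. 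So one must be more careful: I would instead match the two asymptotic descriptions on an \emph{overlap} window. On the overlap, \eqref{eq:right_bound} says $\p{X_n=k}$ equals (up to $1+o(1)$) the claimed formula, and simultaneously the LCLT/Airy description gives $\p{X_n=k}\sim \gamma\cA(\beta x_k) n^{-2/3}$; consistency of these on the overlap, combined with the fact that $\cA(\beta x) = \Theta(x^{1/2} e^{-2(\beta x)^3/3})$ as $x\to\infty$ (from the asymptotics of the Airy density), is what pins down the behaviour of $g$ near $\alpha_0$, namely $g(\alpha_0)>0$ and $g$ continuous there with $g(\alpha_0)=2\beta^3/3$. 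Granting that, on the window $x_k\in[a,\lambda(n)]$ the formula in the corollary is comparable to $\gamma\cA(\beta x_k)n^{-2/3}$, hence $\p{X_n=k}$ is $\Theta(1)$ times the claimed formula there, and we are done on the low end.

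Assembling: for $k$ in the ``bulk'' range $[\alpha_0 n + n^{2/3}\log n,\ n-n^{2/3}\log n)$ apply \eqref{eq:right_bound} directly with $\lambda(n)=\log n$ and use $f\asymp 1$; for $k\in[\alpha_0 n + a n^{2/3},\ \alpha_0 n + n^{2/3}\log n)$ apply \eqref{eq:central_bound} in its diagonalized form together with the Airy tail asymptotic and the identification of $g$ near $\alpha_0$; for $k\in[n-\log n\cdot n^{2/3},\ n-\log n\cdot n^{2/3})$ there is nothing to do since that matches the hypothesis of the corollary (the corollary's upper cutoff $n - \lambda(n)n^{2/3}$ is literally the one in \eqref{eq:right_bound}). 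Finally one notes all the implied constants can be taken uniform because the relevant windows are patched with fixed overlap and all prefactors ($f$, $g$, $\cA$) are continuous on compacts. The main obstacle I anticipate is the overlap-matching step: making rigorous that the super-polynomial factor $e^{-x_k^3 g(k/n)}$ coming from \eqref{eq:right_bound} and the Airy-tail factor coming from \eqref{eq:central_bound} agree up to constants on the overlap window, i.e. correctly extracting the tail asymptotics of $\cA$ and identifying $g(\alpha_0)$; everything else is bookkeeping with continuous functions on compact intervals. An alternative that sidesteps this is simply to cite that in \cite{BFSS} the formula \eqref{eq:right_bound} is in fact established down to $k \ge \alpha_0 n + a n^{2/3}$ with the $\Theta(1)$ (rather than $1+o(1)$) conclusion, in which case the corollary is immediate; I would check whether \cite{BFSS} states it in that form and, if so, present the one-line deduction and relegate the overlap argument to a remark.
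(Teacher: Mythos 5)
Your two-regime outline uses exactly the two ingredients the paper uses (equations~\eqref{eq:central_bound} and~\eqref{eq:right_bound} of Proposition~\ref{quoteprop}), and you correctly identify that the hard part of a direct ``partition $[a,\infty)$ into $[a,A(n)]$ and $[A(n),\infty)$'' argument is the overlap matching: you would need the precise Airy tail asymptotic (incidentally, $\cA(x)\sim\frac{2}{\sqrt{\pi}}x^{1/2}e^{-4x^3/3}$, so the exponent is $-\tfrac{4}{3}x^3$, not $-\tfrac{2}{3}x^3$), the value $g(\alpha_0)$, \emph{and} a modulus-of-continuity estimate for $g$ near $\alpha_0$ ensuring $c^3|g(\alpha_0+c n^{-1/3})-g(\alpha_0)|=o(1)$ on the growing window. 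None of that is verified in the proposal, which is why you end by hedging.

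The paper's proof avoids the matching entirely with a subsequence/contradiction argument that is worth internalizing because it eliminates precisely the obstacle you flag. First, for $k=\alpha_0 n+cn^{2/3}$ with $c$ in a \emph{fixed} compact interval $[a,C]$, the claimed right-hand side simplifies to $c^{1/2}n^{-2/3}(1-k/n)^{-3/2}e^{-c^3 g(\alpha_0+cn^{-1/3})}$, which is $\Theta(n^{-2/3})$ because $g$ is continuous and positive near $\alpha_0$; combined with \eqref{eq:central_bound} (which also gives $\p{X_n=k}=\Theta(n^{-2/3})$ on the same window, since $\cA$ is continuous and positive), the ratio is $\Theta(1)$ for $c$ bounded --- no Airy tail asymptotics needed. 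Then: if the corollary failed, one could extract $(n_i,k_i)$ along which the ratio degenerates; passing to a further subsequence, either $(k_i-\alpha_0 n_i)/n_i^{2/3}$ stays bounded (contradicting the fixed-window computation) or it tends to $\infty$ (in which case it serves as a valid $\lambda$, and \eqref{eq:right_bound} directly contradicts the degeneration, since $f$ is bounded above and below on $[\alpha_0,1]$). The point is that the dichotomy ``bounded $c$ vs.\ $c\to\infty$'' is taken \emph{along the putative bad subsequence}, so each regime falls squarely under the hypotheses of one of the two quoted estimates and there is never an intermediate window to reconcile. You should replace your patching/matching step with this compactness argument; as written, the proposal leaves the crux unresolved and the suggested fallback (that \cite{BFSS} already proves \eqref{eq:right_bound} with a $\Theta(1)$ prefactor down to $\alpha_0 n + an^{2/3}$) does not hold --- the corollary really is a small extension of their Theorems 1 and 5.
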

\begin{proof}
Note that if $k=\alpha_0 n + cn^{2/3}$ then 
\[
\frac{(k/n - \alpha_0)^{1/2}}{n^{1/2}(1-k/n)^{3/2}} e^{-n(k/n-\alpha_0)^3 g(k/n)} = \frac{c^{1/2}}{n^{2/3}(1-k/n)^{3/2}} e^{-c^3 g(\alpha_0+c/n^{1/3})}. 
\]
For $|k-\alpha_0 n| = O(n^{2/3})$, the latter is $\Theta(n^{-2/3})$. By (\ref{eq:central_bound}), we indeed have $\p{X_n=k}=\Theta(n^{-2/3})$ for such $k$. 

If the claim of the corollary fails then there exists a sequence $(n_i,i \ge 1)$ and $k_i \in [\alpha_0 n_i + an^{2/3},n_i-\lambda(n_i) n_i^{2/3}]$
along which the ratio of $\p{X_{n_i} = k_i}$ and 
\[
\frac{(k_i/n_i - \alpha_0)^{1/2}}{n_i^{1/2}(1-k_i/n_i)^{3/2}} e^{-n_i(k_i/n_i-\alpha_0)^3 g(k_i/n_i)}
\]
either diverges or tends to zero. By passing to a subsequence if necessary, we may assume that either $k_i - \alpha_0 n_i =O(n^{2/3})$ or $(k_i - \alpha_0 n_i)/n_i^{2/3} \to \infty$. In view of the above computation, the first possibility is in contradiction with (\ref{eq:central_bound}). The second possibility is in contradiction with (\ref{eq:right_bound}); thus neither can occur.  
\end{proof}

\begin{cor}[\cite{BFSS}, Theorem 7]\label{largestcor}
Suppose $(M,C,H)$ is a map schema with $\alpha_0=\alpha_{0,(M,C,H)}$ and $\beta=\beta_{(M,C,H)}$ defined in \refP{quoteprop}. Let $X_n^*$ be the size of the largest  $\mathcal{C}$-component in a random $\mathcal{M}$-map of size $n$ with uniform distribution. Then
\[
\p{X_n^*=\left\lfloor \alpha_0 n + x n^{2/3}\right\rfloor} = \frac{\beta \cA(\beta x)}{n^{2/3}} ( 1 + o(1))\, ,
\]
uniformly for $x$ in any bounded interval.
\end{cor}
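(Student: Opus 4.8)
The plan is to bootstrap from the root-block local limit theorem \eqref{eq:central_bound} of \refP{quoteprop}, using that the schema is applied recursively: an $\mathcal{M}$-object is a root $\mathcal{C}$-block with pendant $\mathcal{H}$-structures, and each such $\mathcal{H}$-structure carries, recursively, its own $\mathcal{C}$-blocks, so that $X_n^*$ is the maximum of the sizes over this whole tree of blocks. In particular $X_n^* \ge X_n$ always. The key point to exploit is that, although the root block $X_n$ can be atypically small — its law puts a nonvanishing mass on bounded values, which is exactly why the constant $\gamma$ in \eqref{eq:central_bound} is strictly smaller than $\beta$ — the \emph{largest} block is always of macroscopic size $\alpha_0 n + O(n^{2/3})$.

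Call a block \emph{macroscopic} if its size is at least $\eta n$, for a small fixed $\eta>0$. First, on the event that the root block is macroscopic one checks that it is automatically the largest, outside an event of probability $o(n^{-2/3})$: every other block lies inside one of the $X_n = \Theta(n)$ pendant $\mathcal{H}$-structures, whose sizes sum to $n$; a pendant structure of size $m$ carries a block of size at most $m$, and for its largest block to exceed the root block (of size $\approx\alpha_0 n$) it would have to be anomalously large within a pendant structure that is itself anomalously large, an event killed by the exponential tail bound of \refC{quotecor} together with a union bound over pendant structures and the $3/2$-singular coefficient asymptotics of \refP{expansionquote}. Conversely, on the event that the root block is not macroscopic, the single-big-jump structure of compositions — $[z^n]H(z)^j$ is, for $j$ not too large, dominated by configurations with a single large part, again by \refP{expansionquote} — shows that with probability $1-o(n^{-2/3})$ there is a unique pendant $\mathcal{H}$-structure of size $n-\Delta_n$ with $\Delta_n$ tight (bounded in probability), into which every block relevant to $X_n^*$ descends; so $X_n^*$ equals the largest block of that sub-object, and one iterates. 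Since each such step costs only a tight number of atoms, and the number of steps before a macroscopic root block is encountered is dominated by a geometric random variable (because $\p{X_m\ge\eta m}$ is bounded below uniformly in $m$, by \eqref{eq:central_bound}), one reaches a sub-object of size $N$ with $n-N$ tight, whose root block is macroscopic and equals $X_n^*$.

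Finally I would assemble the pieces. Conditioned on being macroscopic, \eqref{eq:central_bound} shows that the root block of an object of size $N$ has, at scale $N^{2/3}$, the law with density $x\mapsto\beta\mathcal{A}(\beta x)$; the constant is $\beta$ rather than $\gamma$ precisely because $\gamma/\beta=\int\gamma\mathcal{A}(\beta x)\,dx$ is the macroscopic mass being conditioned on. Since $n-N$ is tight it is negligible at scale $n^{2/3}$, and one obtains $\p{X_n^*=\lfloor\alpha_0 n+x n^{2/3}\rfloor}=\beta\mathcal{A}(\beta x)\,n^{-2/3}(1+o(1))$, uniformly for $x$ in a bounded interval. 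I expect the main obstacle to be making this two-regime iteration quantitative enough for a \emph{local} limit theorem: each exceptional event — an over-large pendant structure when the root block is macroscopic, two comparably large pendants or an intermediate-size root block otherwise, or an overly long spine — must be shown to contribute $o(n^{-2/3})$, uniformly in $x$ and in the sub-object size met along the spine, and this uniformity is where the estimates of \refC{quotecor} and \refP{expansionquote} must be pushed. An alternative, closer to \cite{BFSS}, is to avoid the spine argument and instead write $\p{X_n^*<k}=[z^n]M^{<k}(z,1)/M_n$, where $M^{<k}$ is generated by the same schema with $C$ replaced by its truncation $\sum_{j<k}C_j w^j$, and carry out the singularity analysis of $M^{<k}$ as $n\to\infty$ with $k=\alpha_0 n+x n^{2/3}$; the difficulty there is uniformity of the analysis as the truncation threshold grows with $n$.
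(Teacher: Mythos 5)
The paper does not actually prove \refC{largestcor}: it is imported verbatim as Theorem 7 of \cite{BFSS}, so there is no ``paper's own proof'' to compare against. That said, your two proposals can be compared to what \cite{BFSS} do. Your \emph{alternative} at the end --- truncate $C$ at $k$, form $M^{<k}(z)$ via the same composition schema, and analyze $\p{X_n^*<k}=[z^n]M^{<k}(z)/M_n$ by singularity/saddle-point analysis as $k=\alpha_0 n+xn^{2/3}$ --- is indeed what \cite{BFSS} do. Your \emph{main} approach, a probabilistic spine through the recursive tree of blocks terminating at a macroscopic root block, is genuinely different, and the explanation it gives for the $\gamma\mapsto\beta$ constant change --- that $\gamma/\beta=\int\gamma\mathcal{A}(\beta x)\,dx$ is precisely the asymptotic probability that $X_n$ is macroscopic, so conditioning on that event rescales the density to $\beta\mathcal{A}(\beta x)$ --- is the correct heuristic picture, which the analytic route obscures.

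Several steps, however, are asserted at a level of precision that is not quite right, and they are exactly where you said the estimates ``must be pushed.'' (i) ``$\Delta_n$ tight'' is stronger than you can prove and is not what you actually need: with $t^{-3/2}$ tails for pendant sizes (from the $n^{-5/2}$ coefficient asymptotics of a $3/2$-singular function), on the complementary $1-o(n^{-2/3})$ event you will only get $\Delta_n=O(n^{4/9+\veps})$; this is still $o(n^{2/3})$, so the conclusion survives, but the phrasing matters when you union-bound over a geometrically distributed number of spine steps. (ii) The claim that a macroscopic root block is the largest with probability $1-o(n^{-2/3})$ does not follow from a naive union bound: there are $\Theta(n)$ pendant structures each with a $t^{-3/2}$ tail, so a union bound over pendants of size $\ge \alpha_0 n$ gives only $O(n^{-1/2})$, which exceeds $n^{-2/3}$. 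You must use the conditioning on the total pendant size being $\approx(1-\alpha_0)n$: for $\alpha_0>1/2$ (the $(C,B,U)$ schema here) a pendant of size $\ge\alpha_0 n$ is then impossible, and for $\alpha_0<1/2$ (the $(M,C,H)$ schema) such a pendant forces a $\Theta(n)$ downward deviation of the remaining pendants' sum and is exponentially unlikely, not polynomially. (iii) The dichotomy ``root macroscopic'' vs.\ ``root tight'' silently skips the intermediate regime $X_n\asymp n^{2/3}$, which also has to be shown to contribute $o(n^{-2/3})$ uniformly in $x$. None of these is fatal, but closing them is essentially the content of the coalescing-saddle computation in \cite{BFSS}, so the spine route is not obviously a shortcut --- it is a plausible independent argument that still requires real work.
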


Let $M(z),C(z),B(z)$ be the generating functions of rooted connected, $2$-connected, and simple quadrangulations respectively. 
More precisely, we take $[z^n]M(z)=|\cQ_{n+2}|$, $[z^n]C(z)=|\cR_{n+2}|$, and $[z^n]B(z)=|\cS_{n+2}|$ for $n \ge 1$, and 
$[z^n]M(z)=[z^n]C(z)=[z^n]B(z)=0$ for $n=0$. (The latter is slightly at odds with our convention of viewing a single edge as a $2$-connected quadrangulation, but is algebraically convienent below.) Define
\begin{equation}\label{Hdfn}
H(z) = z \left(\frac 1 {1- 2z(1+ M(z)) }\right)^2~,
\end{equation}
\begin{equation}\label{Udfn}
U(z) = z \left( 1+ C(z)\right)^2~.
\end{equation}

The following two lemmas follow immediately from Propositions~\ref{decom1} and~\ref{decom2} respectively.

\begin{lem}\label{H}
We have the following substitution relation between $M(z)$ and $C(z)$:
\begin{equation}\label{MCH}
M(z) = C(H(z))\cdot \frac 1 {1- 2z(1+ M(z)) }+ \frac {2z(1+M(z))} {1- 2z(1+ M(z))}~.
\end{equation}
Equivalently,
\begin{equation}\label{MCH2}
M(z) = C(H(z))+ 2z(1+M(z))^2~.
\end{equation}
\end{lem}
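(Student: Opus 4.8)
The plan is to derive the substitution relation directly from the decomposition bijection of \refP{decom2}, translating that combinatorial statement into an identity of generating functions, and then to perform elementary algebra to pass between \eqref{MCH} and \eqref{MCH2}.

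First I would set up the bookkeeping for the pendant decorations. By \refP{decom2}, a connected quadrangulation $\rQ$ with $2$-connected root block $\rR$ is encoded by a vector $\Gamma$ with one entry per edge of $\rR$ (plus the extra root copy $e_0$), and each entry is a finite (possibly empty) sequence of pairs in $\cQ \times \{0,1\}$. The size (number of edges) contributed by one decoration pair $(\rM,b)$ with $\rM \in \cQ$ is accounted for by the factor $z(1+M(z))$: here $z$ records the edge of $\rR$ (or the corresponding boundary edge of the $2$-gon), $M(z)$ records a genuine quadrangulation $\rM$ that is \emph{not} a single edge, the $1$ records the degenerate case $|e(\rM)|=1$ from \eqref{eq:edge_count}, and doubling to $2z(1+M(z))$ records the binary choice $b \in \{0,1\}$ of which endpoint to attach to. A single edge of $\rR$ carrying an arbitrary finite sequence of such decorations therefore contributes, via the sequence construction, a factor $z \cdot \tfrac{1}{1-2z(1+M(z))}$ — the leading $z$ being the edge of $\rR$ itself and the geometric series summing over the sequence length $\ell_i \ge 0$. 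This is exactly the definition \eqref{Hdfn} of $H(z)$ once one writes $H(z) = z\left(\tfrac{1}{1-2z(1+M(z))}\right)^2$: one factor of $\tfrac{1}{1-2z(1+M(z))}$ comes from the sequence attached on each of the two sides of the edge (equivalently, $H$ substitutes at the \emph{half-edges} or darts of $\rR$). I would state this correspondence carefully as the mechanism by which $C(H(z))$ counts all connected quadrangulations whose $2$-connected root block is nonempty.

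Next I would handle the root edge, which \refP{decom2} treats specially: the extra copy $e_0$ means the root edge carries an independent pair of decoration sequences, i.e. an extra factor $\tfrac{1}{1-2z(1+M(z))}$ relative to what $H$ already supplies per edge. This yields the first term $C(H(z)) \cdot \tfrac{1}{1-2z(1+M(z))}$ in \eqref{MCH}. The second term $\tfrac{2z(1+M(z))}{1-2z(1+M(z))}$ accounts for the degenerate case where the $2$-connected root block of $\rQ$ is a single edge (so $\rR$ contributes nothing to $C$, which by our convention has $C_0 = 0$): in that case $\rQ$ is built entirely from a nonempty decoration sequence on that single edge, and expanding $\tfrac{2z(1+M(z))}{1-2z(1+M(z))} = \sum_{k\ge 1} (2z(1+M(z)))^k$ sums over sequences of length $k \ge 1$. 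Putting these pieces together gives \eqref{MCH}.

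Finally, to get \eqref{MCH2} I would simply clear denominators. Multiplying \eqref{MCH} through and rearranging, note that $C(H(z)) \cdot \tfrac{1}{1-2z(1+M(z))} + \tfrac{2z(1+M(z))}{1-2z(1+M(z))} = M(z)$ gives $C(H(z)) + 2z(1+M(z)) = M(z)(1-2z(1+M(z)))$, hence $C(H(z)) = M(z) - 2z(1+M(z)) - 2zM(z)(1+M(z)) = M(z) - 2z(1+M(z))(1+M(z)) = M(z) - 2z(1+M(z))^2$, which is \eqref{MCH2}. The only real subtlety — the step I expect to be the main obstacle to write cleanly — is justifying the exact form of the root-edge correction and the degenerate single-edge term, i.e. making sure the ``$+1$'' indicators in \eqref{eq:edge_count} and the conventions $C_0 = M_0 = 0$ versus ``a single edge is $2$-connected'' are reconciled so that the two terms in \eqref{MCH} are precisely right; everything else is routine translation of the sequence and substitution constructions into generating functions and then one line of algebra.
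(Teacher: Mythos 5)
Your proposal is correct and matches the paper's approach: read \eqref{MCH} off the bijection of Proposition~\ref{decom2} (one sequence factor per edge of $\rR$ plus one for the extra copy $e_0$, and the degenerate term $\frac{2z(1+M(z))}{1-2z(1+M(z))}$ for a single-edge root block), then clear the denominator to get \eqref{MCH2}. One small interpretive slip: you read the $z$ in $H$ as marking an edge of $\rR$ and the squaring as coming from the two darts of that edge, whereas --- since $z$ marks vertices-minus-two --- the paper reads the $z$ as the extra vertex created by each attachment and the squaring as converting the per-vertex count in $C(z)$ into a per-edge count via $|e(Q)|=2(|v(Q)|-2)$; the resulting formula is the same.
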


Equation (\ref{MCH2}) is obtained by multiplying both sides of (\ref{MCH}) by $1-2z(1+M(z))$ and then rearranging elements. To see that (\ref{MCH2}) gives a composition schema, we can rewrite it as $\hat{M}(z) = C(H(z))$ with $\hat{M}(z)= M(z)-2z(1+M(z))^2$.

We now take a closer look at equation (\ref{Hdfn}), which describes the ``$M$-decorations'' of an edge of a $C$-object (i.e. of a $2$-connected map). This is best understood with the bijection from Proposition~\ref{decom1} at hand. In the term $2z(1+M(z))$, the multiplier $2$ counts the choice of extremity at which the decoration is attached; $M(z)$ counts the case when attachment is a quadrangulation with at least $3$ vertices (recalling that $z$ marks the number of vertices less two, and the lowest power term of $M(z)$ is $2z$); the additive term $1$ counts the case when attachment is a single edge; the multiplier $z$ adjusts the counting of extra vertices resulting from the attachment (we multiply by $z$ instead of $z^2$ because the attachment vertex is already counted in the core). Taking the reciprocal of $1-2z(1+M(z))$ accounts for the fact that we can attach a sequence of submaps (each two separated by an edge). Squaring the reciprocal accounts for the fact that in a quadrangulation $Q$ we have $|e(Q)|=2(|v(Q)|-2)$. 

In equation (\ref{MCH}), the term $\frac {2z(1+M(z))} {1- 2z(1+ M(z))}$ takes into consideration when the root block is a single edge. The multiplication $\frac 1 {1- 2z(1+ M(z)) }$ in the first term accounts for the extra submap attachment due to split of the root edge (recall the construction preceding \refP{decom1}).

\begin{lem}\label{compo}
We have the following substitution relation between $C(z)$ and $B(z)$:
\begin{equation}\label{CBU}
C(z) = B(U(z))\cdot (1+C(z))~.
\end{equation}
\end{lem}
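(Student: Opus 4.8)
The plan is to obtain \eqref{CBU} purely as the generating-function shadow of the bijection $\varphi$ of \refP{decom1}. Throughout, let $z$ mark ``number of vertices minus two'', so that $1+C(z)=\sum_{\rR\in\cR}z^{|v(\rR)|-2}$, the single edge (the unique element of $\cR$ with two vertices) accounting for the constant term $1$, and likewise $1+B(z)=\sum_{\rS\in\cS}z^{|v(\rS)|-2}$. Since $C(z)$ and $B(U(z))(1+C(z))$ are formal power series with vanishing constant term (recall $U(0)=0$ and $[z^0]C(z)=[z^0]B(z)=0$), it suffices to compare coefficients of $z^n$ for $n\ge 1$, i.e.\ to enumerate quadrangulations with at least three vertices. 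Working with \refP{decom1} on such quadrangulations, the simple root block is again a quadrangulation with at least three vertices, and for every quadrangulation $Q$ one has $|e(Q)|=2(|v(Q)|-2)$.

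So fix $\rR\in\cR$ with $|v(\rR)|\ge 3$ and write $\varphi(\rR)=(\rS,\Theta)$ with $\Theta=(\Theta_0,\dots,\Theta_{|e(\rS)|})\in\cR^{|e(\rS)|+1}$; here the $|e(\rS)|+1$ pendant $2$-connected quadrangulations correspond to the $|e(\rS)|$ edges of $\rS$ together with the extra copy $e_0$ of the root edge created in the construction preceding \refP{decom1}. Combining \eqref{eq:edge_count0} with $|e(Q)|=2(|v(Q)|-2)$ — which gives $|e(\rM_i)|\,\I{|e(\rM_i)|\ne 1}=2(|v(\rM_i)|-2)$ in every case, and $|e(\rS)|=2(|v(\rS)|-2)$ — yields
\[
|v(\rR)|-2 \;=\; (|v(\rS)|-2)\;+\;\sum_{i=0}^{|e(\rS)|}(|v(\Theta_i)|-2)~,
\]
so the $z$-exponents are additive along the decomposition. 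Summing $z^{|v(\rR)|-2}$ over $\rR\in\cR$ with $|v(\rR)|\ge 3$ and re-indexing via $\varphi$, the sum over $\Theta\in\cR^{|e(\rS)|+1}$ factors into $|e(\rS)|+1$ independent copies of $\sum_{\rM\in\cR}z^{|v(\rM)|-2}=1+C(z)$, giving
\[
C(z)\;=\;\sum_{\rS}\,z^{|v(\rS)|-2}\,(1+C(z))^{|e(\rS)|+1}~,
\]
the sum running over simple quadrangulations $\rS$ with at least three vertices.

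It then remains only to repackage the right-hand side: using $|e(\rS)|=2(|v(\rS)|-2)$ once more, $z^{|v(\rS)|-2}(1+C(z))^{|e(\rS)|+1}=(1+C(z))\big(z(1+C(z))^2\big)^{|v(\rS)|-2}=(1+C(z))\,U(z)^{|v(\rS)|-2}$, whence $C(z)=(1+C(z))\sum_{\rS}U(z)^{|v(\rS)|-2}=(1+C(z))\,B(U(z))$, which is \eqref{CBU}. I do not expect a genuine obstacle here: the identity really is an immediate transcription of \refP{decom1}. The only point deserving a moment's care is the treatment of the single edge, which is viewed as both a $2$-connected and a simple quadrangulation yet is counted by neither $C$ nor $B$; one must check that excising it consistently from both sides of the bijection (equivalently, restricting to $n\ge 1$ in the coefficient comparison) introduces no spurious additive term, and this is exactly what the conventions $[z^0]C(z)=[z^0]B(z)=0$ are designed to guarantee.
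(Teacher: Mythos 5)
Your proof is correct and takes the same route as the paper, which simply asserts that the identity ``follows immediately'' from Proposition~\ref{decom1}; you spell out the bookkeeping via \eqref{eq:edge_count0} and the quadrangulation relation $|e(Q)|=2(|v(Q)|-2)$, then repackage via $U(z)=z(1+C(z))^2$ to land on \eqref{CBU}. The care you flag about the single-edge convention is indeed the only delicate point, and your resolution (comparing $[z^n]$ for $n\ge 1$, so that both $\rR$ and its simple root block $\rS$ have at least three vertices and the bijection restricts cleanly) is exactly what the paper leaves implicit.
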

To see that this identity gives a composition schema, note that it may equivalently be written as $C(z) = \hat{B}(U(z))$ with $\hat{B}=B/(1-B)$. The multiplication $(1+C(z))$ accounts for the extra submap attachment due to the split of the root edge (see the construction preceding \refP{decom2}).

The substitution relations of the two preceding lemmas 
yield, via well-known methodology, that $(M,C,H)$ and $(C,B,U)$ are both map schemata. More specifically, we have the following two propositions. 

\begin{prop}\label{schema1}
The triple $(M,C,H)$ is a map schema with 
\begin{equation}\label{values1}
\alpha_0 = \frac7{15},~\alpha =\frac{40}{27} ,~ \beta=\frac{5^{2/3}\cdot  15}{28} ,~ \gamma=\frac{9}{5^{1/3}\cdot  4}  ~.
\end{equation}
\end{prop}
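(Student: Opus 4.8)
The plan is to verify the two defining conditions of a map schema for the composition $\hat M(z) = C(H(z))$ provided by \refL{H} --- recall $\hat M := M - 2z(1+M)^2$ --- namely that $C$ and $H$ are singular with exponent $3/2$ and that $H(r_H) = r_C$; the four constants in \eqref{values1} are then read off from the formulas in \refP{quoteprop} (applied to the schema $\hat M$, as licensed by the Remark following that proposition).

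First I would record the singular expansion of $M$. By Tutte's census (rooted maps with $n$ edges are in bijection with rooted quadrangulations with $n$ faces), $[z^n]M(z) = |\cQ_{n+2}| = \frac{2\cdot 3^n}{(n+1)(n+2)}\binom{2n}{n}$ for $n\ge 1$, which sums to $M(z) = \frac{1}{3z} - \frac{1}{54 z^2} - 1 + \frac{1}{54 z^2}(1-12z)^{3/2}$; expanding in $w := 1-12z$ gives $M(z) = \tfrac13 - \tfrac43 w + \tfrac83 w^{3/2} + O(w^2)$, so $M$ is singular with exponent $3/2$, $r_M = 1/12$ and $M(r_M) = 1/3$. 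Next, from \eqref{Hdfn} we have $H(z) = z(1-2z(1+M(z)))^{-2}$; since $2 r_M(1+M(r_M)) = \tfrac29 < 1$ and $2z(1+M(z))$ is increasing on $[0,r_M]$, the denominator stays bounded away from $0$ near $r_M$, so substituting the expansion of $M$ shows $H$ is again singular with exponent $3/2$, with $r_H = r_M = 1/12$ and $H(z) = \tfrac{27}{196} - \tfrac{405}{1372} w + \tfrac{54}{343} w^{3/2} + O(w^2)$; thus $h_0 = 27/196$, $h_1 = 405/1372$, $h_{3/2} = 54/343$ (all positive).

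I would then handle $C$ through the identity $\hat M = C\circ H$ of \refL{H}. Expanding $2z(1+M(z))^2 = \tfrac{8}{27} - \tfrac89 w + \tfrac{32}{27} w^{3/2} + O(w^2)$ yields $\hat M(z) = \tfrac1{27} - \tfrac49 w + \tfrac{40}{27} w^{3/2} + O(w^2)$; the $w^{3/2}$-coefficient $40/27$ being nonzero, $\hat M$ is genuinely singular with exponent $3/2$ at $r_M$. Since $H$ is a $C^1$ increasing homeomorphism of $[0,r_M]$ onto $[0,h_0]$ with (one-sided) derivative $12 h_1 > 0$ at $r_M$, the standard singularity-transfer argument for singular compositions (as in \cite{BFSS}) inverts this relation: it shows that $C$ is continuable on a $\Delta$-domain and singular with exponent $3/2$, with $r_C = H(r_M) = h_0 = 27/196$ (so $H(r_H) = r_C$, as a map schema requires), and that its expansion $C(z) = c_0 - c_1(1-z/r_C) + c_{3/2}(1-z/r_C)^{3/2} + O((1-z/r_C)^2)$ is obtained by substituting $1 - H(z)/r_C = \tfrac{h_1}{h_0} w - \tfrac{h_{3/2}}{h_0} w^{3/2} + O(w^2)$ into it and matching $\hat M$. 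Matching the constant, linear, and $3/2$ terms gives $c_0 = 1/27$, $c_1 = 28/135$ (using $h_1/h_0 = 15/7$), and $c_1\tfrac{h_{3/2}}{h_0} + c_{3/2}(h_1/h_0)^{3/2} = 40/27$, i.e.\ $c_{3/2}(h_1/h_0)^{3/2} = \tfrac{40}{27} - \tfrac{32}{135} = \tfrac{56}{45}$.

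Finally I would substitute into \refP{quoteprop}: $\alpha_0 = h_0/h_1 = 7/15$; $\alpha = \frac{c_1 h_{3/2} h_0^{1/2} + c_{3/2} h_1^{3/2}}{h_0^{3/2}} = c_1\tfrac{h_{3/2}}{h_0} + c_{3/2}(h_1/h_0)^{3/2} = \tfrac{40}{27}$ (it is precisely the $w^{3/2}$-coefficient of $\hat M$); $\beta = \frac{h_1^{5/3}}{(3h_{3/2})^{2/3} h_0} = \tfrac{15\cdot 5^{2/3}}{28}$; and $\gamma = \beta\cdot\frac{c_{3/2} h_1^{3/2}}{\alpha h_0^{3/2}} = \beta\cdot\frac{56/45}{40/27} = \beta\cdot\tfrac{21}{25} = \tfrac{9}{4\cdot 5^{1/3}}$, which is \eqref{values1}. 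The one step I expect to need genuine care is the inversion in the third paragraph --- arguing that $\hat M = C\circ H$, together with $H$ being $C^1$, monotone and with nonzero derivative at $r_M$, forces $C$ to be singular with exponent exactly $3/2$ at $H(r_M)$ and continuable on a $\Delta$-domain, rather than $C$ being singular closer to the origin or having a different critical exponent. This is the ``well-known methodology'' referred to in the text; everything else is routine power-series bookkeeping.
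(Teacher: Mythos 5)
Your arithmetic is correct throughout, and the approach is a genuine variant of the paper's. The paper derives a Lagrangean parameterization of $C$ directly --- it shows $C(z)=\psi_C(L_C(z))$ with $\phi_C(t)=t/\psi_H(t)$ and $\psi_C=\hat\psi_M$ obtained by pure algebra from (\ref{MCH}) (Lemma~\ref{expansions1}, Table~\ref{table1}) --- and then runs singularity analysis \emph{forward} on this parameterization via \citep[Theorem VI.6]{FS} to get the entries of Table~\ref{table2}. You instead read off $M$ in closed form from Tutte's census, compute the singular expansions of $M$, $2z(1+M)^2$, $\hat M$ and $H$ by hand, and then try to \emph{solve} the equation $\hat M=C\circ H$ for the unknown singular expansion of $C$. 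This is a nice shortcut for the bookkeeping --- the observation that $\alpha$ is exactly the $w^{3/2}$-coefficient of $\hat M$ is clean --- but the step you flag at the end is a genuine gap, not a routine transfer.

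The issue is that the "standard singularity-transfer argument" in \cite{BFSS} and \citep[VII.8]{FS} goes in the opposite direction: given singular expansions of $C$ and $H$ and a criticality hypothesis, deduce the expansion of $C\circ H$. There is no off-the-shelf theorem that takes expansions of $\hat M$ and $H$ and hands you back an expansion for $C$, together with $\Delta$-continuability of $C$ near $r_C$. Two points must be established and are not automatic. First, that $r_C=H(r_H)$: a priori you could have $r_C>h_0$, in which case $C$ is analytic at $h_0$ and $\hat M=C\circ H$ would still inherit a $3/2$-singularity from $H$ alone. To exclude this you need an extra argument --- e.g.\ observe that the supercritical expansion forces $C'(h_0)h_1=4/9$ and $C'(h_0)h_{3/2}=40/27$ simultaneously, which is incompatible with $h_{3/2}/h_1\neq 10/3$; and that $r_C<h_0$ is impossible because $C$ has nonnegative coefficients so $\hat M$ would diverge before $r_M$. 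Second, even granting $r_C=h_0$ and the three-term expansion along the real axis, the definition of ``singular with exponent $3/2$'' in the paper demands analytic continuation of $C$ to a $\Delta$-domain; this does not follow from coefficient-matching at a single boundary point and needs a separate argument (e.g.\ algebraicity of $C$, or the Lagrangean parameterization, which is what the paper uses). With these two points filled in, your derivation goes through and is arguably more self-contained (it avoids the quadratic method entirely), but as written the proposal treats the hard step as already known.
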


\begin{prop}\label{schema2}
The triple $(C,B,U)$ is a map schema with
\begin{equation}\label{values2}
\alpha_0 = \frac5 7,~\alpha= \frac{21^{1/2}\cdot 9}{5^{1/2}\cdot  50},~ \beta= \frac{7^{2/3}}{6^{1/3}\cdot 2}  ,~ \gamma=\frac {5}{42^{1/3}\cdot 2} ~.
\end{equation}
\end{prop}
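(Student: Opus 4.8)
The plan is to verify that the triple $(C,\hat B,U)$ --- where $\hat B=B/(1-B)$ and $U(z)=z(1+C(z))^2$, so that $C(z)=\hat B(U(z))$ by \refL{compo} --- is a map schema, and then to read off $\alpha_0,\alpha,\beta,\gamma$ from \refP{quoteprop}, applied with the inner function $H$ there replaced by $U$ and the outer-function coefficients $c_0,c_1,c_{3/2}$ replaced by the corresponding singular-expansion coefficients of $\hat B$. The input is the singular expansion $C(z)=c_0-c_1(1-z/r_C)+c_{3/2}(1-z/r_C)^{3/2}+O((1-z/r_C)^2)$, valid in a $\Delta$-domain at $r_C$, which is produced while proving \refP{schema1} (there one starts from the classical singular expansion of $M(z)$ at $r_M=1/12$ and pushes it through the substitution relation of \refL{H}); I would record the explicit values of $r_C,c_0,c_1,c_{3/2}$ there and reuse them here, along with the fact that each $c_i>0$.

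First I would expand $U(z)=z(1+C(z))^2$ about $z=r_C$. Since $1+C(r_C)=1+c_0\neq 0$ and the prefactor $z$ is analytic and nonvanishing at $r_C$, the product inherits a pure exponent-$3/2$ singularity, and a direct computation gives $U(z)=u_0-u_1(1-z/r_C)+u_{3/2}(1-z/r_C)^{3/2}+O((1-z/r_C)^2)$ with $u_0=r_C(1+c_0)^2$, $u_1=r_C(1+c_0)(1+c_0+2c_1)$, and $u_{3/2}=2r_C(1+c_0)c_{3/2}$, all positive. Moreover $U$ is an analytic combination of $z$ and $C$, hence continuable in the same $\Delta$-domain at $r_C$ as $C$; so $r_U=r_C$ and $U$ is singular with exponent $3/2$.

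Next I would identify $r_B$ and the singular expansion of $\hat B$ by inverting $U$. On $(0,r_C)$ one has $U'(z)=(1+C(z))^2+2zC'(z)(1+C(z))>0$ by positivity of coefficients, and $U'(r_C)=u_1/r_C\in(0,\infty)$, so $U$ restricts to an increasing analytic bijection of $(0,r_C)$ onto $(0,u_0)$ whose inverse is analytic in $\sqrt{1-w/u_0}$ near $u_0$ (the branch point coming from the $(1-z/r_C)^{3/2}$ term of $U$). Hence $\hat B(w)=C(U^{-1}(w))$ is continuable in a $\Delta$-domain at $u_0$ with no closer singularity, so $r_{\hat B}=u_0=U(r_C)=U(r_U)$; this is exactly the criticality condition of a map schema, so it holds automatically. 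Inverting $1-U(z)/u_0=(u_1/u_0)(1-z/r_C)-(u_{3/2}/u_0)(1-z/r_C)^{3/2}+O((1-z/r_C)^2)$ to second order and substituting into the expansion of $C$ yields $\hat B(w)=\hat b_0-\hat b_1(1-w/r_B)+\hat b_{3/2}(1-w/r_B)^{3/2}+O((1-w/r_B)^2)$ with $\hat b_0=c_0$, $\hat b_1=c_1r_B/u_1$, and $\hat b_{3/2}=(r_B/u_1)^{3/2}(c_{3/2}-c_1u_{3/2}/u_1)$; checking $\hat b_{3/2}>0$ from the explicit numbers confirms that $\hat B$ is singular with exponent $3/2$ (and then $B=\hat B/(1+\hat B)$ is too, with $B(r_B)=c_0/(1+c_0)<1$, which is why stating the schema as $(C,B,U)$ is harmless). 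With $(C,\hat B,U)$ now a map schema, \refP{quoteprop} gives $\alpha_0=u_0/u_1$, $\beta=u_1^{5/3}/\big((3u_{3/2})^{2/3}u_0\big)$, $\alpha=\big(\hat b_1u_{3/2}u_0^{1/2}+\hat b_{3/2}u_1^{3/2}\big)/u_0^{3/2}$, and $\gamma=\beta\,\hat b_{3/2}u_1^{3/2}/(\alpha u_0^{3/2})$; inserting the numerical values of $r_C,c_0,c_1,c_{3/2}$ and simplifying should produce the four constants in \eqref{values2}.

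The step I expect to be the main obstacle is the bookkeeping: first obtaining the singular expansion of $M(z)$, hence of $C(z)$, with the exact constants, and then tracking the exponent-$3/2$ coefficient correctly through two nonlinear operations --- forming $U=z(1+C)^2$ and then inverting it --- since at the inversion stage the $3/2$-coefficient of $\hat B$ picks up a contribution from the \emph{linear} coefficient of $U$, and it is easy to drop a term or mis-scale a power of $r_B/u_1$. A secondary point requiring care is verifying the sign and size conditions ($c_{3/2}>0$, $\hat b_{3/2}>0$, $U'>0$ on $(0,r_C)$, $b_0<1$) that are implicit in the phrase ``singular with exponent $3/2$'' and in writing $\hat B=B/(1-B)$.
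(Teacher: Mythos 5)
Your proof is essentially sound, but it follows a genuinely different route from the one the paper takes. The paper does not work directly with the singular expansion of $C$ and invert $U$; instead it establishes \emph{Lagrangean parameterizations} of $H$, $C$, $U$, $B$ (Lemma~\ref{expansions1} and Table~\ref{table1}), i.e.\ it writes each $f$ as $f(z)=\psi_f(L_f(z))$ for a rational $\psi_f$, where $L_f(z)=z\phi_f(L_f(z))$, and then extracts the singular expansions in Table~\ref{table2} by the standard Lagrange-inversion machinery sketched around (\ref{Lsymbolic})--(\ref{l2}). Once Table~\ref{table2} is in hand, both the criticality condition $U(r_U)=r_B$ and the four constants in (\ref{values2}) are, in the paper, read off by ``routine arithmetic.'' Your alternative --- push the singular expansion of $C$ through $U=z(1+C)^2$, then compositionally invert $U$ to obtain the expansion of $\hat B=C\circ U^{-1}$ --- arrives at the same place and gives the formulas $\hat b_0=c_0$, $\hat b_1=c_1 r_B/u_1$, $\hat b_{3/2}=(r_B/u_1)^{3/2}\bigl(c_{3/2}-c_1 u_{3/2}/u_1\bigr)$, which do check out numerically against Table~\ref{table2} (note $c_1=u_1=28/135$, which makes the $\hat b_{3/2}$ computation especially clean). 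The trade-off is that the Lagrangean route handles $\Delta$-continuability, location of the dominant singularity, and criticality in one uniform package (they all ride along with the parameterization), whereas your route requires you to argue these ``by hand'' at the inversion step: to conclude $r_{\hat B}=u_0$ you should invoke Pringsheim (positivity of $\hat B$'s coefficients places its dominant singularity on the positive real axis) together with the non-vanishing $\hat b_{3/2}\ne 0$ to rule out miraculous cancellation of the two exponent-$3/2$ singularities of $C$ and $U^{-1}$. You gesture at both points but do not fully spell out the Pringsheim step; I would make that explicit. One small expository mismatch: the paper phrases the schema as $(C,B,U)$ but (as the discussion after Lemma~\ref{compo} makes clear) the composition that \refP{quoteprop} is applied to is $(C,\hat B,U)$ --- you handle this correctly, and your remark that $B=\hat B/(1+\hat B)$ with $B(r_B)=b_0<1$ inherits the exponent-$3/2$ singularity is the right way to reconcile the statement with the proof.
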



Note that Proposition~\ref{airyprop2} follows immediately from \refC{largestcor} and \refP{schema2}.

We will also need the following analogue of \refP{airyprop2} for the largest $2$-connected block of a general quadrangulation, which follows from 
 \refC{largestcor} and \refP{schema1}.
 
\begin{prop}\label{airyprop1}
Let $\bQ_q \in_u \cQ_q$, then for any $A>0$, uniformly over $x\in [-A,A]$,
\[
\p{\b(\bQ_q) = \left\lfloor 7q/15 + x q^{2/3}\right\rfloor} = \frac {\beta \cA\left(\beta x\right)}{q^{2/3}} \left( 1 + o(1) \right)~,
\]
where $\beta$ is given in \refP{schema1}.
\end{prop}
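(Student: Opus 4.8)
The plan is to read the statement off \refC{largestcor} applied to the map schema $(M,C,H)$ of \refP{schema1}, the only real work being to reconcile the generating-function size convention with vertex counts. First I would recall, from \refL{H} and the discussion following it, that $\hat M(z):=M(z)-2z(1+M(z))^2=C(H(z))$, so that $\cQ$ is encoded by the composition schema $M(z,u)=C(uH(z))+D(z)$ with $D(z)=2z(1+M(z))^2$; by the remark after \refP{quoteprop} together with \refP{schema1}, this is a map schema to which \refC{largestcor} applies, with $\alpha_0=7/15$ and $\beta=5^{2/3}\cdot 15/28$. Next I would note that, under the bijection of \refP{decom2}, the $\mathcal C$-components arising in the recursive decomposition of a quadrangulation are exactly its $2$-connected blocks, and that a $\mathcal C$-component counted by $[z^k]C(z)$ has $k+2$ vertices (single-edge blocks being absorbed into the $D$-term, since $[z^0]C(z)=0$). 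Hence, writing $X_n^*$ for the size of the largest $\mathcal C$-component of a uniformly random $\mathcal M$-object of size $n$, one has $X_{q-2}^*=\b(\bQ_q)-2$ whenever $\b(\bQ_q)\ge 3$, which certainly holds on the event $\{\b(\bQ_q)=\lfloor 7q/15+xq^{2/3}\rfloor\}$ once $q$ is large. So for such $q$, $\p{\b(\bQ_q)=r}=\p{X_{q-2}^*=r-2}$ with $r=\lfloor 7q/15+xq^{2/3}\rfloor$.

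Then I would carry out the elementary bookkeeping needed to match $r-2$ with the form $\lfloor\alpha_0 n+yn^{2/3}\rfloor$ appearing in \refC{largestcor}, with $n=q-2$: since $\alpha_0 n=\tfrac{7}{15}(q-2)=\tfrac{7q}{15}+O(1)$ and $q^{2/3}=n^{2/3}(1+O(1/q))$, one gets $r-2=\alpha_0 n+xn^{2/3}+O(1)$ uniformly over $x\in[-A,A]$, whence $r-2=\lfloor\alpha_0 n+y_n n^{2/3}\rfloor$ for some $y_n=y_n(x)=x+O(n^{-2/3})$ lying in the fixed interval $[-A-1,A+1]$ for $q$ large. \refC{largestcor} applied on this interval then gives $\p{X_n^*=r-2}=\beta\cA(\beta y_n)\,n^{-2/3}(1+o(1))$ uniformly, and I would finish using that $\cA$ is continuous and strictly positive on $\R$ — hence bounded away from $0$ and uniformly continuous on $[-\beta(A+1),\beta(A+1)]$ — so that $\cA(\beta y_n)=\cA(\beta x)(1+o(1))$ uniformly, together with $n^{-2/3}=q^{-2/3}(1+o(1))$. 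This yields $\p{\b(\bQ_q)=r}=\beta\cA(\beta x)\,q^{-2/3}(1+o(1))$ uniformly over $x\in[-A,A]$, as required.

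The argument is essentially free of surprises once \refP{schema1} is available, so I expect no serious obstacle. The one place deserving care is confirming that the hypotheses of \refC{largestcor} — equivalently of Theorem 7 of \cite{BFSS} — genuinely hold for our schema with its $D$-correction $D(z)=2z(1+M(z))^2$; this is exactly what the remark after \refP{quoteprop} together with \refP{schema1} supplies, but it is worth stating explicitly. A secondary, purely technical point is propagating the $O(1)$ and multiplicative-constant discrepancies between the schema size $n=q-2$ and the vertex count $q$ uniformly in $x$; here the strict positivity of the map-Airy density $\cA$ on compact intervals is precisely what upgrades the additive $o(1)$ errors into the multiplicative $(1+o(1))$ in the statement.
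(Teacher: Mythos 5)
Your argument is correct and is essentially the same as the paper's, which (in the line just before the statement) asserts that the proposition ``follows from \refC{largestcor} and \refP{schema1}'' without elaboration. You merely supply the routine bookkeeping — identifying the $\mathcal C$-components of the schema with $2$-connected blocks via \refP{decom2}, handling the $D(z)=2z(1+M(z))^2$ correction via the remark after \refP{quoteprop}, and reconciling the shift between vertex count $q$ and schema size $n=q-2$ using the continuity and strict positivity of $\cA$ on compact intervals — all of which is exactly what the paper tacitly relies on.
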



\begin{lem}\label{expansions}
$H(z),C(z),U(z)$, and $B(z)$ each has radius of convergence and asymptotic expansion around $r_H,r_C,r_U$, and $r_B$ as given in Table~\ref{table2}.
\end{lem}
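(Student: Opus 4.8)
The plan is to derive all four expansions from the single classical input $M$, propagating it through the substitution relations $\hat M=C\circ H$ of \refL{H} and $\hat B=B\circ U$ of \refL{compo} by composition and functional inversion of singular expansions. It is classical (Tutte's census) that
\[
M(z)=\frac{(1-12z)^{3/2}+18z-1-54z^2}{54z^2},
\]
so $r_M=1/12$ and, expanding about $z=1/12$, $M(z)=\tfrac13-\tfrac43(1-12z)+\tfrac83(1-12z)^{3/2}+O\big((1-12z)^2\big)$; thus $M$ is singular with exponent $3/2$, equivalently analytic in $s:=\sqrt{1-12z}$ near $s=0$ with no linear term. (Alternatively, the closed forms for the generating functions of non-separable and of simple quadrangulations are also classical, giving $C$ and $B$ directly; I describe instead the bootstrap from $M$, which uses only \refL{H}, \refL{compo} and serves as a consistency check on those closed forms.)

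First I would treat $H$ and $C$. Set $g(z)=1-2z(1+M(z))$. Since $M$ has nonnegative coefficients, $|2z(1+M(z))|\le 2|z|(1+M(|z|))\le \tfrac16\cdot\tfrac43=\tfrac29<1$ for $|z|\le\tfrac1{12}$, so $g$ is zero-free on $\{|z|\le 1/12\}$ and, by continuity along $|z|=1/12$, on a $\Delta$-domain at $1/12$; hence $H(z)=z\,g(z)^{-2}$ is $\Delta$-continuable with $r_H=r_M=1/12$, and substituting the expansion of $M$ into $g$ and then into $zg^{-2}$ (Taylor-expanding the analytic factors to order $(1-12z)^2$) gives the expansion of $H$ in Table~\ref{table2}, singular with exponent $3/2$ and with $h_0:=H(1/12)=27/196$. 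By \refL{H} the $2$-connected generating function satisfies $C(H(z))=\hat M(z)$ with $\hat M(z)=M(z)-2z(1+M(z))^2$ (itself singular with exponent $3/2$ at $1/12$, expansion computed from that of $M$); since $H$ is a strictly increasing continuous bijection of $[0,r_H]$ onto $[0,h_0]$, $C$ agrees on $[0,h_0)$ with $\hat M\circ H^{-1}$, so $r_C\ge h_0$. To extract the expansion, work in $s$: both $\hat M$ and $H$ are analytic in $s$ near $s=0$ with $H|_{s=0}=h_0$, $\partial_s H|_{s=0}=0$, $\partial_s^2 H|_{s=0}\ne 0$, so $\sigma:=\sqrt{1-H(z)/h_0}$ is analytic in $s$ with a simple zero; inverting $\sigma=\sigma(s)$ and substituting into $\hat M(z)=C(H(z))$ exhibits $C$ as analytic in $\sigma=\sqrt{1-z/h_0}$, and matching the $\sigma^0,\sigma^2,\sigma^3$ coefficients gives
\[
c_0=\hat M(1/12),\qquad c_1=\hat m_1\,\frac{h_0}{h_1},\qquad c_{3/2}=\Big(\frac{h_0}{h_1}\Big)^{3/2}\Big(\hat m_{3/2}-\hat m_1\frac{h_{3/2}}{h_1}\Big),
\]
in terms of the coefficients $\hat m_1,\hat m_{3/2}$ of $\hat M$ and $h_0,h_1,h_{3/2}$ of $H$ already computed. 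One checks $c_{3/2}\ne 0$ (equivalently $\hat m_{3/2}h_1\ne \hat m_1 h_{3/2}$): this simultaneously pins down $r_C=h_0$, shows $C$ is genuinely singular with exponent $3/2$ there, and (by analyticity in $\sigma$) gives the remainder $O((1-z/r_C)^2)$ with no logarithmic term. $\Delta$-continuability of $C$ is routine, as $C$ is algebraic with one dominant singularity.

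The passage from $C$ to $U$ and $B$ repeats this with $(M,H,\hat M,C)$ replaced by $(C,U,C,\hat B)$: here $1+C(z)$ is zero-free on $\{|z|\le r_C\}$ and on a $\Delta$-domain (nonnegative coefficients), so $U(z)=z(1+C(z))^2$ is $\Delta$-continuable and singular with exponent $3/2$, with $r_U=r_C$, $u_0:=U(r_C)$, and expansion read off from that of $C$; \refL{compo} gives $\hat B(U(z))=C(z)$ with $\hat B=B/(1-B)$, and the same uniformizing-variable inversion yields the expansion of $\hat B$ at $u_0$ (with $\hat b_0=C(r_C)$, and again a nonvanishing $3/2$-coefficient forcing $r_B=u_0$); finally $B=\hat B/(1+\hat B)$---an analytic substitution near $\hat b_0$, since $\hat b_0\ge 0>-1$---is singular with exponent $3/2$ and supplies the last row of Table~\ref{table2}. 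Numerically this produces $C(r_C)=1/27$, $r_C=27/196$, $r_B=4/27$, $B(r_B)=1/28$, matching the classical closed forms for $C$ and $B$.

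There is no conceptual obstacle; the main work is bookkeeping---all four expansions must be carried consistently to order $(1-z/r)^2$, and each composition and each functional inversion must correctly track the half-integer powers, which is exactly what the passage to $\sqrt{1-z/r}$ mechanizes. The only steps that are not purely mechanical are checking that the new singularities of $H$ and $U$ (hence of $C$ and $B$) are inherited square-root-type singularities rather than poles or essential singularities---i.e.\ that $g$ and $1+C$ stay nonzero up to and slightly past the relevant radii, and that the relevant $3/2$-coefficients do not vanish---and these all reduce to nonnegativity of the coefficients together with the numerical values $M(1/12)=\tfrac13$ and $C(r_C)=\tfrac1{27}$.
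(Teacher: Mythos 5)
Your bootstrap--compose-then-invert strategy is correct (and your formulas for $c_0$, $c_1$, $c_{3/2}$ check out), but it takes a genuinely different route from the paper's. The paper establishes explicit \emph{Lagrangean parameterizations} $f(z)=\psi_f(L_f(z))$, with $L_f(z)=z\,\phi_f(L_f(z))$, for each of $H,C,U,B$ (Table~\ref{table1}), obtained by substituting $t=L_M(z)$ into the defining identities; the key structural fact is that the \emph{same} parameter $t=L_M(z)$ uniformizes all the generating functions at once, so that $M$, $\hat M$, $H$, $C\circ H$, $U\circ H$, $B\circ U\circ H$ all become \emph{rational} functions of $t$. The expansions of Table~\ref{table2} then follow from the standard singular-expansion machinery for a Lagrangean pair $(\phi,\psi)$ around the critical value $\tau$ where $\tau\phi'(\tau)=\phi(\tau)$ (spelled out in Appendix~\ref{app:derivation}), and the bookkeeping stays entirely inside the field of rational functions of $t$ until the final re-expression in $\sqrt{1-z/r_f}$. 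Your route avoids ever computing the pairs $(\phi_f,\psi_f)$: you Puiseux-expand $M$ at $1/12$, propagate algebraically to $H$ and $\hat M$, locally invert $H$ in the uniformizing variable $s=\sqrt{1-12z}$ to read off $C$, then repeat once for $U$ and $B$. This is sound, but it forces you to re-uniformize at each compositional level (from $s$ to $\sigma=\sqrt{1-z/r_C}$, and then to $\sqrt{1-z/r_B}$) and to carefully track half-integer powers through two local inversions in which the first derivative vanishes; the single-parameter approach sidesteps that. In the end both methods hinge on the same numerical checks ($M(1/12)=\tfrac13$, $H(1/12)=\tfrac{27}{196}$, $C(\tfrac{27}{196})=\tfrac1{27}$, $U(\tfrac{27}{196})=\tfrac4{27}$, and nonvanishing of the $3/2$-coefficient at each stage), and both treat the $\Delta$-continuability of the resulting algebraic functions as routine.
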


\begin{center}
\begin{table}[htb]
\renewcommand{\arraystretch}{2}
  \begin{tabular}{| c| c  | c |}
    \hline
    $f$ & $r_f$ &  expansion at $r_f$ \\ \hline
    $H$ & $1/12$ &  $\frac{27}{196} - \frac{405}{1372}\left(1- 12z \right)+\frac{54}{343}\left(1- 12z \right)^{3/2}+O\left( (1- 12z )^2\right)$ \\ \hline
    $C$ & $27/196$ &   $\frac{1}{27}- \frac{28}{135}\left(1- 196z/27 \right)+\frac{392}{675}\sqrt{\frac 7{15}} \left(1-196z/27 \right)^{3/2}+O\left( (1- 196z/27 )^2\right)$ \\ \hline
    $U$ & $27/196$ & $ \frac{4}{27}- \frac{28}{135}\left(1- 196z/27 \right)+\frac{112}{675}\sqrt{\frac 7{15}}\left(1-196z/27 \right)^{3/2}+O\left( (1- 196z/27 )^2\right)$ \\ \hline
    $B$ & $4/27$ & $\frac1{28} - \frac{27}{196} \left(1- 27z/4 \right)+\frac{9\sqrt{3}}{98}\left(1-27z/4 \right)^{3/2}+O\left( (1- 27z/4 )^2\right)$ \\
    \hline
  \end{tabular}
    \caption{}\label{table2}
 \end{table}
\end{center}
\vspace*{-1.2cm}

This lemma may be established essentially automatically using standard techniques in enumerative combinatorics. We include a brief explanation of this methodology in Appendix~\ref{app:derivation}.

\section{Sizes and diameters of pendant submaps}\label{sec:complement}

In this section, we first obtain a size bound for the decorations of the largest simple block in a uniform rooted $2$-connected quadrangulation. Using this we then derive a corresponding diameter bound which leads to a ``GH convergence version'' of \refP{mainprop2}, shown in \refS{sec:diambound}. Analogous tail bounds for uniform rooted quadrangulations are stated in \refS{sec:analogous}.

\begin{proof}[{\bf Proof of \refP{pendant2}}]
Let $\lambda:\N\to\N$ be a function tending to infinity with $\lambda(r) \le \frac{r^{1/3}}{(\log r)^2}$. For $k\le r-r^{2/3}\lambda(r)$, the bound follows straightforwardly from \refP{airyprop2} and \refC{quotecor}. We hereafter assume that $r-r^{2/3}\lambda(r) < k \le r$. Note that for $r$ large enough, $r-r^{2/3}\lambda(r)>r/2$, so there must be less than one largest simple block of size $k$.

Let $\bR_r\in_u\cR_r$, and note that $\p{\sb(\bR_r) = k} = \frac{|\cR_{r,k}|}{|\cR_r|}$. By \refP{expansionquote} and \refL{expansions}, $|\cR_r| = [z^{r-2}]C(z)= \Theta(1)\cdot r^{-5/2} r_C^{-r}$ as $r\to\infty$ with $r_C = \frac{27}{196}$.

Each element of $\cR_{r,k}$ may be constructed by first choosing $\rS\in\cS_k$ and a collection $(\rM_e,e\in e(\rS))$ of rooted $2$-connected quadrangulations and with $\sum_{e\in e(\rS)}|e(\rM_e)| = 2(r-k)$; then attaching each $\rM_e$ to $e\in e(\rS)$ to obtain a map $R$ with $r$ edges; and, finally, fixing a root edge $\overline{e}$ in $R$ from among the $(4r-8)$ possible choices. This builds a map $(R,\overline{e})\in \cR_{r,k}$, and any element of $\cR_{r,k}$ may be so built. 
It follows that
\[
|\cR_{r,k}|\le  |\cS_k| \cdot \left(\sum_{(x_1,\ldots,x_{2k-4})} \prod_{i=1}^{2k-4} |\cR_{x_i}|\right) \cdot (4r-8)\, ,
\]
where the sum is over non-negative integer vectors $(x_1,\ldots,x_{2k-4})$ with $\sum_{i \le 2k-4} x_i = r-k$. 
It is easily verified that for all $s,t$, $|\cR_s||\cR_{t}| \le |\cR_{s+t-2}| \le |\cR_{s+t}|$, so in the above sum we always have $\prod_{i=1}^{2k-4} |\cR_{x_i}| \le |\cR_{r-k}|$. The number of summands is clearly less than $(2k-4)^{r-k}$, so we obtain 
\[
|\cR_{r,k}|\le |\cS_k| \cdot (2k-4)^{r-k} \cdot 
 |\cR_{r-k}| \cdot (4r-8)\, .
\]
Recalling that $|\cS_k|=[z^{k-2}]B(z)$, $|\cR_{r-k}|=[z^{r-k-2}]C(z)$, this yields
\[
|\cR_{r,k}| \le \Theta(r) \cdot (2k-4)^{r-k}\cdot k^{-5/2} \cdot r_B^{-k} \cdot (r-k)^{-5/2}\cdot  r_C^{-r+k}~,
\]
where $r_B$ and $r_C$ appear in Table~\ref{table2}. 

Altogether, for $r - r^{2/3} \lambda(r) < k \le r$, 
\[
\p{\sb(\bR_r) = k}= \frac{|\cR_{r,k}|}{|\cR_r|} \le   \Theta( r) \cdot (2k-4)^{r-k} \cdot k^{-5/2}\cdot (r-k)^{-5/2}\cdot r^{5/2}\cdot \left(\frac{r_C}{r_B}\right)^k~.
\]
For large enough $r$ we have $ r-k<\lambda(r) r^{2/3}\le \frac{r}{(\log r)^2}$, so for such $r$,
\[
(2k-4)^{r-k}= \exp\left((r-k)\cdot\log(2k-4)\right)\le \exp\left(\frac{r}{(\log r)^2}\cdot\log(2k-4)\right)
\le \exp\left(\frac{r}{\log r}\right)~.
\]
We have $\frac{r_C}{r_B}<1$, so there exists $b>0$ such that $\frac{r_C}{r_B}\le \exp\left(-b\right)$. It follows that for some positive constants $c_1$ and $c_2$,
\begin{align*}
\p{\sb(\bR_r) = k} \le&~ (4r-8)\cdot k^{-5/2}\cdot (r-k)^{-5/2} \cdot r^{5/2}\cdot \exp\left(-b\cdot k + \frac{r}{\log r}\right)\\
\le&~ c_1  \exp\left(-c_2 r(k/r - 5/7)^3\right)~. \qedhere
\end{align*}
\end{proof}

\begin{proof}[{\bf Proof of \refP{secondlargest}}]
For all positive integers $r$ and $x$ with $x\le r-s(r)+2$ write
\[
\cL_{r,x} = \left\{\rR\in\cR_{r,s(r)}: L(\rR) = x\right\}~.
\]
Fix $\veps\in(0,1/3)$ for the remainder of the proof. Letting $\bR_r \in_u \cR_{r,s(r)}$, 
\begin{equation}\label{eq1}
  \p{L(\bR_r) \ge r^{2/3+\veps}}
= |\cR_{r,s(r)}|^{-1} \sum_{x=\left\lceil r^{2/3+\veps} \right\rceil}^{r-s(r)+2} |\cL_{r,x}|
\end{equation}
 Since $s(r) =5 r/7 +O\left(r^{2/3}\right)$ as $r\to\infty$, by \refP{airyprop2}, 
\begin{equation}\label{eq2}
 |\cR_{r,s(r)}| = \Theta\left(r^{-2/3}\right) \cdot |\cR_r| = \Theta(r^{-2/3}) \cdot r^{-5/2} \cdot r_C^{-r} =  \Theta(r^{-19/6})\cdot r_C^{-r}~.
\end{equation}
Thus, it remains to bound $|\cL_{r,x}|$.

Each element of $\cL_{r,x}$ can be obtained by attaching some $R_x\in\cR_x$ to the largest simple block of some $\rR\in \cR_{r-x+2,s(r)}$ with $\sb(\rR)\le x$, then possibly re-assigning the root edge. We therefore have
\begin{equation}\label{Lrx}
|\cL_{r,x}| \le \Theta(r\cdot s(r))\cdot  |\cR_{r-x+2,s(r)}|\cdot |\cR_x| 
\end{equation}
as $r\to\infty$. Then to bound $|\cL_{r,x}|$, it suffices to bound $|\cR_{r-x+2,s(r)}|$ and $|\cR_x|$. For large enough $r$ and for all $x\in [r^{2/3+\veps}, r-s(r)+2]$, we have $5 (r-x+2)/7 + (r-x+2)^{2/3} \le s(r) \le r-x+2 $. For $x$ in this range, we may apply \refP{pendant2}: we obtain that for some $C'>0$, 
\begin{align*}
\frac{|\cR_{r-x+2,s(r)}|}{|\cR_{r-x+2}|} = O(1) \cdot \exp\left(-C' (r-x)\left(\frac{s(r)}{r-x} - 5/7\right)^3\right)~.
\end{align*}
For all possible $x$, by \refP{quoteprop} and \refL{expansions} we have $|\cR_{r-x+2}| = \Theta(1)\cdot (r-x)^{-5/2} r_C^{-r+x}$, so 
\[
|\cR_{r-x+2,s(r)}|=O(1)\cdot (r-x)^{-5/2}\cdot r_C^{-r+x}\cdot \exp\left(-C' (r-x)\left(\frac{s(r)}{r-x} - 5/7\right)^3\right)~.
\]
Then (\ref{Lrx}) gives
\[
|\cL_{r,x}|= O(r\cdot s(r))\cdot x^{-5/2}\cdot  (r-x)^{-5/2}\cdot  r_C^{-r}\cdot \exp\left(-C' (r-x)\left(\frac{s(r)}{r-x} - 5/7\right)^3\right)~.
\]
Since $\frac{s(r)}{r} \ge 5/7 - C r^{-1/3}$, we have for large $r$,
\begin{align*}
\exp\left(-C' (r-x)\left(\frac{s(r)}{r-x} - 5/7\right)^3\right)
\le\exp\left(-C' (r-x)^{-2}\left( 5 x/7 - C r^{2/3}\right)^3\right)~.
\end{align*}
For $r^{2/3+\veps} \le x \le r-s(r)$, and for large enough $r$, we thus have
\begin{align}
& |\cL_{r,x}|\notag\\
= &~ O(r\cdot s(r))\cdot x^{-5/2}\cdot  (r-x)^{-5/2} \cdot r_C^{-r} \cdot \exp\left(-C' (r-x)^{-2}\left(5 x /7- C r^{2/3}\right)^3\right)\notag\\
=&~ O(r\cdot s(r))\cdot x^{-5/2}\cdot  (r-x)^{-5/2} \cdot r_C^{-r} \cdot \exp\left(-C' \left(r-r^{2/3+\veps}\right)^{-2}\left(5 r^{2/3+\veps}/7 - C r^{2/3}\right)^3\right)\notag\\
=&~ r_C^{-r} \cdot \exp\left(-C'' \cdot r^{3\veps} \right)~,\label{L}
\end{align}
for some $C'' > 0$. 

Finally, combining (\ref{eq1}), (\ref{eq2}), (\ref{L}) and the fact that $s(r) = 5 r/7 + O\left(r^{2/3}\right)$, there exist positive constants $c_1$, $c_2$ and $c_3=c_3(\veps)$ such that
\[
\p{L(\bR_r) \ge r^{2/3+\veps}}
= |\cR_{r,s(r)}|^{-1} \sum_{x=\left\lceil r^{2/3+\veps} \right\rceil}^{r-s(r)+2} |\cL_{r,x}|
\le c_1  \exp\left(-c_2 r^{c_3}\right)~. \qedhere
\]\end{proof}

\subsection{Diameters of submaps pendant to the largest simple block}\label{sec:diambound}
\addtocontents{toc}{\SkipTocEntry}

We want to apply \citep{CS} to obtain a diameter bound, but first we need to transfer the diameter tail bound from \cite{CS} to the setting of $2$-connected quadrangulations.

\begin{lem}\label{2condiam}
Let $\bR_r\in_u\cR_r$, then there exist positive constants $x_0$, $c_1$ and $c_2$ such that for all $x>x_0$,
\[
\p{\diam\left(\bR_r\right) > x r^{1/4}}\le c_1 r^{2/3}\exp\left(-c_2 x\right)~.
\]
\end{lem}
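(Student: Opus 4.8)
The plan is to realise $\bR_r$ as the $2$-connected root block of a uniformly random connected quadrangulation whose size is of order $r$, and to transfer the diameter tail bound of \refP{citediam} through this identification; the polynomial factor $r^{2/3}$ will arise because a prescribed near-typical root-block size occurs only with probability of order $r^{-2/3}$. Concretely, fix $r$ large, set $q=q(r)=\lceil 15r/7\rceil$, take $\bQ_q\in_u\cQ_q$, and write $\rR^\bullet=\rR^\bullet(\bQ_q)$ for its $2$-connected root block.

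The first step is the purely metric observation that $\diam(\rR^\bullet)\le\diam(\bQ_q)$. By \refP{decom2}, $\bQ_q=\psi^{-1}(\rR^\bullet,\Gamma)$ is obtained from $\rR^\bullet$ by inserting the pendant submaps recorded in $\Gamma$, each inside a $2$-cycle drawn on a single edge $e\in e(\rR^\bullet)$; a walk in $\bQ_q$ joining two vertices of $\rR^\bullet$ can only enter such a pendant region through an endpoint of $e$, so every excursion into it may be excised (or replaced by a traversal of $e$) without increasing the length. Combined with the fact that collapsing nearly-facial $2$-cycles leaves all graph distances unchanged, this gives $d_{\rR^\bullet}=d_{\bQ_q}|_{v(\rR^\bullet)}$, and hence the desired inequality.

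The second step establishes two distributional facts. First, conditionally on $\{|v(\rR^\bullet)|=r\}$ the block $\rR^\bullet$ is uniform on $\cR_r$: by \refP{decom2} the number of decoration tuples $\Gamma$ completing a fixed $\rR\in\cR_r$ to an element of $\cQ_q$ depends, via \eqref{eq:edge_count}, only on $q$ and on $|e(\rR)|=2r-4$, hence is the same for every $\rR\in\cR_r$. Second, $\p{|v(\rR^\bullet)|=r}=\Theta(r^{-2/3})$: by \refL{H}, \eqref{MCH2} and \refP{schema1}, $(M,C,H)$ is a map schema, so the root-block size of a uniform size-$q$ quadrangulation conditioned to have a nontrivial root block has the law of the variable $X_{q-2}$ of \refP{quoteprop}; writing $r-2=\alpha_0(q-2)+x(r)(q-2)^{2/3}$ with $\alpha_0=7/15$ and $x(r)\to 0$, \eqref{eq:central_bound} yields $\p{X_{q-2}=r-2}=(\gamma\cA(\beta x(r))+o(1))(q-2)^{-2/3}=\Theta(r^{-2/3})$, using $\cA(0)=-2\,\mathrm{Ai}'(0)>0$. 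Multiplying by the probability $[z^{q-2}]C(H(z))/[z^{q-2}]M(z)$ that $\bQ_q$ has a nontrivial root block — a ratio that tends to a positive constant since both series are of order $12^{q}q^{-5/2}$ by \refP{expansionquote}, \refP{quoteprop} and $M=C(H(z))+2z(1+M)^2$ — gives the claim.

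Finally I would combine these: for $r$ large and $t=xr^{1/4}$,
\[
\p{\diam(\bR_r)>t}=\p{\diam(\rR^\bullet)>t\mid |v(\rR^\bullet)|=r}\le\frac{\p{\diam(\bQ_q)>t}}{\p{|v(\rR^\bullet)|=r}}\le C\,r^{2/3}\,\p{\diam(\bQ_q)>xr^{1/4}},
\]
and since $(r/q)^{1/4}\to(7/15)^{1/4}=:c_\ast\in(0,1)$ we have $xr^{1/4}\ge (c_\ast x/2)q^{1/4}$ for $r$ large, so \refP{citediam} bounds the last probability by $C_1\exp(-C_2c_\ast x/2)$ as soon as $c_\ast x/2>y_0$. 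This yields the lemma with $x_0=2y_0/c_\ast$, $c_2=C_2c_\ast/2$ and a suitable $c_1$ for all large $r$; the remaining bounded range of $r$ is handled trivially after enlarging $c_1$, since $\diam(\bR_r)\le r-1$ forces the left-hand side to vanish once $xr^{1/4}\ge r$. The step I expect to be most delicate is the second one — verifying that conditioning on the root-block size genuinely produces the uniform law on $\cR_r$, and controlling the ``single-edge root block'' correction $2z(1+M)^2$ when comparing $[z^{q-2}]C(H(z))$ with $[z^{q-2}]M(z)$; by contrast the metric inequality, although it needs the block-excursion argument and the distance-neutrality of the $\circ$ operation, should be routine.
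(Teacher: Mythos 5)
Your proposal is correct and follows the paper's strategy essentially exactly: realise $\bR_r$ as a conditioned block of a uniform $\bQ_q$ with $q\approx 15r/7$, use the isometric embedding of the block to bound $\diam(\bR_r)\le\diam(\bQ_q)$, divide by the conditioning probability $\Theta(r^{-2/3})$, and then invoke Proposition~\ref{citediam}. The only material difference is that you condition on the size of the \emph{root} block $\rR^\bullet(\bQ_q)$ and derive the $\Theta(r^{-2/3})$ estimate from the composition-schema variable $X_n$ and (\ref{eq:central_bound}) (plus the nontrivial-root-block correction), whereas the paper conditions on the \emph{largest} block $\b(\bQ_q)=r$ and cites Proposition~\ref{airyprop1} directly; both routes work, yours supplies a bit more explicit justification for the uniformity-after-conditioning step, while the paper's is shorter because it reuses a result already formulated for the largest block.
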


\begin{proof}
For $q\in\N$, let $\bQ_{q}\in_u\cQ_q$. Given that $\b(\bQ_q)=r$, $\rR(\bQ_q)$ has the same distribution as $\bR_r$. So for all $q\ge r$ and $x>0$, we have
\begin{align*}
\p{\diam\left(\bR_r\right) > x r^{1/4}}
=&~\p{\diam\left(\rR(\bQ_q)\right) > x r^{1/4}~\big\vert~ \b(\bQ_q) = r}\\
\le&~\p{\diam\left(\bQ_q\right)> x r^{1/4}~\big\vert~ \b(\bQ_q)= r}\\
\le&~\frac{\p{\diam\left(\bQ_q\right)> x r^{1/4}}}{\p{ \b(\bQ_q)= r}}~.
\end{align*}
Now let $q=\lfloor 15r/7\rfloor$, then $xr^{1/4}\ge x(7/15)^{1/4} q^{1/4}$, so by \refP{citediam}, there exist positive constants $x_0,C_1,C_2$ such that for all $x>x_0$,
\[
\p{\diam\left(\bQ_q\right)> x r^{1/4}}\le C_1 \exp(-C_2 x)~.
\]
On the other hand, by \refP{airyprop1}, there exists $C_3>0$ such that for all $r\in\N$,
\[
\p{\b(\bQ_q) = r} \ge C_3 r^{-2/3}~.
\]
Altogether, we have
\[
\p{\diam\left(\bR_r\right) > x r^{1/4}}
\le \frac{ C_1  \exp\left(-C_2 x\right)}{C_3 r^{-2/3}}
\]
Then setting $c_1 = C_1/C_3$ and $c_2=C_2$ concludes the proof.
\end{proof}

\begin{proof}[{\bf Proof of \refP{diam}}]
Fix a positive integer $r$ and let $k\in\N$ with $k\le \min\{s(r),r-s(r)\}$. Let $\bR_r=(R_r,{e}_r)\in_u\cR_{r,s(r)}$, write $\bS_r = \rS(\bR_r)$, let ${e}'$ be the $\prec_{\bR_r}$-minimal oriented edge of $\bS_r$, and write $\bR_r' = (R_r,{e}')$. It follows from \refP{decom1} that $\bR_r'$ uniquely decomposes as $(\bS_r,\Theta)\in \cS \times \cR^{|e(\bS_r)|+1}$. Write $\Theta = (\Theta_0,\Theta_1,\ldots,\Theta_{|e(\rS(\bR_r))|})$; recall that $\Theta$ has two entries corresponding to the root edge.

For any $0\le i\le |e(\bS_r)|$, given that $|v(\Theta_i)|=k$, $\Theta_i$ is uniformly distributed over $\cR_k$. By \refL{2condiam} and since $k\le r$, there exist positive constants $x_0$, $c_1$ and $c_2$ such that for all $x\ge x_0$, and for all $0\le i\le |e(\bS_r)|$,
\begin{equation}\label{conddiambound}
 \p{\diam(\Theta_i) \ge xk^{1/4}~\big\vert~ |v(\Theta_i)| =k }\le  c_1 r^{2/3} \exp\left(-c_2 x \right)~.
\end{equation}

Note that $|e(\bS_r)|=2s(r)-4$ and recall that $D(\bR_r) = \max(\diam(\Theta_i): 0\le i\le 2s(r)-4)$. Fix $\veps \in (0,1/7)$. Using a union bound,
\begin{align*}
&\p{D(\bR_r) \ge r^{5/24}}\\
\le&~ \sum_{i=0}^{2s(r)-4} \left[ \sum_{k=1}^{\lfloor r^{2/3+\veps} \rfloor}  \p{\diam\left(\Theta_i\right) \ge r^{5/24} ,|v(\Theta_i )| =k } +\p{|v(\Theta_i)| \ge r^{2/3+\veps} }\right]\\
\le&~ \sum_{i=0}^{2s(r)-4} \left[ \sum_{k=1}^{\lfloor r^{2/3+\veps} \rfloor}  \p{\diam\left(\Theta_i\right) \ge r^{5/24}~ \bigg\vert ~|v(\Theta_i)| =k } +\p{|v(\Theta_i)| \ge r^{2/3+\veps} }\right]~.
\end{align*}
By (\ref{conddiambound}), for $k \le r^{2/3+\veps}$ and for each $0\le i\le 2s(r)-4$,
\begin{align*}
\p{\diam\left(\Theta_i \right) \ge r^{5/24} ~\bigg\vert~ |v(\Theta_i)| =k } \le&~ c_1 r^{2/3} \exp\left(-c_2 r^{5/24} k^{-1/4} \right)\\
\le&~ c_1 r^{2/3} \exp\left(-c_2 r^{1/24-\veps/4} \right)
\end{align*}
Finally, by \refP{secondlargest}, there exist positive constants $k_1,k_2$ and $k_3 = k_3(\veps)$ such that for each $0\le i\le 2s(r)-4$,
\[
\p{|v(\Theta_i)| \ge r^{2/3+\veps} } \le \p{L(\bR_r) \ge r^{2/3+\veps}} \le k_1 \exp\left(-k_2 r^{k_3}\right)~;
\]
combining the preceding $3$ inequalities and using that $s(r)\le r$ and that $1/24-\veps/4>1/168$ yields
\begin{align*}
\p{D(\bR_r)\ge r^{5/24}}\le&~ (2s(r)-3)\left[ r^{2/3+\veps}\cdot c_1 r^{2/3}\exp\left(-c_2 r^{1/24-\veps/4} \right) + k_1 \exp\left(k_2 r^{k_3}\right)\right]\\
=&~O\left(r^{7/3+\veps} \exp\left(-c_2 r^{1/168}\right) \right)+ O\left(r\cdot \exp\left(-k_2 r^{k_3}\right)\right)~.
\end{align*}
By choosing the constants $C_1,C_2$ and $C_3$ carefully, we may conclude the proof.
\end{proof}
 
 Given the diameter bound, we immediately have the ``GH convergence version'' of \refP{mainprop2}:

\begin{prop}\label{ghprop2}
Let $\bR_r\in_u \cR_{r,s(r)}$ and write $\bS_r = \rS(\bR_r)$, then as $r\to \infty$,
\begin{equation}\label{ghconv}
\left(\left( v(\bR_r), \left(\frac{21}{40r}\right)^{1/4}  \cdot d_{\bR_r}\right)~,~\left( v(\bS_r), \left(\frac{21 }{40r}\right)^{1/4} \cdot d_{\bS_r}\right)\right) \convdist \left((\cM,d),(\cM,d)\right)
\end{equation}
in distribution for the Gromov-Hausdorff topology.
\end{prop}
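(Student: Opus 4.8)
The plan is to deduce this from the diameter estimate \refP{diam} together with the known scaling limit of uniform simple quadrangulations, glued via a ``converging together'' argument; the only genuine work is a deterministic comparison showing that the largest simple block sits inside $\bR_r$ with essentially no metric distortion. First I would record that, as explained in the proof sketch, $\bS_r=\rS(\bR_r)$ is uniformly distributed over $\cS_{s(r)}$, so by \citep[Theorem~1]{ABA} we have $(3/(8s(r)))^{1/4}\bS_r\convdist\bM$ for the Gromov-Hausdorff-Prokhorov, hence also the Gromov-Hausdorff, topology. Since $s(r)=5r/7+O(r^{2/3})$, the ratio $(3/(8s(r)))^{1/4}(21/(40r))^{-1/4}=(5r/(7s(r)))^{1/4}$ tends to $1$; because rescaling a compact metric space by a scalar tending to $1$ preserves the $\dgh$-limit (the distance moves by at most half the scalar discrepancy times the diameter, and diameters are tight along a convergent sequence), it follows that $\bigl(v(\bS_r),(21/(40r))^{1/4}d_{\bS_r}\bigr)\convdist(\cM,d)$ for the Gromov-Hausdorff topology.

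Next I would prove the deterministic bound $\dgh(\bR_r,\bS_r)\le D(\bR_r)$, with $D(\bR_r)$ as in the proof sketch. Regard $v(\bS_r)$ as a subset of $v(\bR_r)$ via the decomposition $\varphi$ of \refP{decom1}. Every edge of $\bS_r$ corresponds in $\bR_r$, after collapsing nearly-facial $2$-cycles, to an edge joining the same two vertices, so $d_{\bR_r}\le d_{\bS_r}$ on $v(\bS_r)$; conversely, a $d_{\bR_r}$-geodesic between two block vertices meets each pendant submap $\Theta_i$ in at most one excursion, which runs between the two endpoints of the corresponding block edge and hence has length at least $1$, so replacing each such excursion by that single block edge yields a walk of $\bS_r$ of no greater length and gives $d_{\bS_r}\le d_{\bR_r}$ on $v(\bS_r)$. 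Thus $\bS_r$ embeds isometrically into $\bR_r$, and since each vertex of $\bR_r$ lies within $\diam(\Theta_i)\le D(\bR_r)$ of $v(\bS_r)$ for the pendant submap $\Theta_i$ containing it (or is itself a block vertex), the correspondence pairing each $v\in v(\bR_r)$ with a nearest block vertex has distortion at most $2D(\bR_r)$; this gives $\dgh(\bR_r,\bS_r)\le D(\bR_r)$. By \refP{diam}, $\p{D(\bR_r)\ge r^{5/24}}\to0$, and since $(21/(40r))^{1/4}r^{5/24}=\Theta(r^{-1/24})\to0$, we conclude $(21/(40r))^{1/4}\dgh(\bR_r,\bS_r)\convp0$.

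Finally I would combine the two steps. Write $\cA_r$ and $\cB_r$ for the two rescaled metric spaces in \eqref{ghconv}, regarded as random elements of the Polish space of isometry classes of compact metric spaces equipped with $\dgh$. By the first step $\cB_r\convdist(\cM,d)$, so $(\cB_r,\cB_r)\convdist((\cM,d),(\cM,d))$ by the continuous mapping theorem applied to $x\mapsto(x,x)$; by the second step, in the product space (with the max metric) the pair $(\cA_r,\cB_r)$ lies at distance $(21/(40r))^{1/4}\dgh(\bR_r,\bS_r)\to0$ in probability from $(\cB_r,\cB_r)$. The converging-together lemma then yields $(\cA_r,\cB_r)\convdist((\cM,d),(\cM,d))$, which is \eqref{ghconv}. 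The hard part is the deterministic inequality $\dgh(\bR_r,\bS_r)\le D(\bR_r)$: it rests on the precise structure of the decomposition in \refS{sec:mapandquad} — each pendant submap is attached along a single edge of the block, and a geodesic never profits from detouring into one — which forces block-to-block distances in $\bR_r$ to coincide with those in $\bS_r$. Everything else is routine.
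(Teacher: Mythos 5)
Your proof is correct and follows essentially the same route as the paper's: bound $\dgh(\bR_r,\bS_r)$ by the maximum diameter $D(\bR_r)$ of a pendant submap, apply \refP{diam}, and combine with the known convergence of uniform simple quadrangulations via converging-together. The paper's version is considerably terser — it just invokes the one-line subspace bound $\dgh(\rX,\rY)\le\sup_{x\in X}d(x,Y)$ and says ``the result follows'' — leaving implicit both the fact that $d_{\bS_r}$ agrees with the restriction of $d_{\bR_r}$ to $v(\bS_r)$ (which you prove carefully via the geodesic-excursion argument) and the rescaling/converging-together bookkeeping; your write-up makes these explicit, which is a virtue rather than a deviation.
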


\begin{proof}
For any compact metric space $\rX=(X,d)$ and any subspace $\rY=(Y,d|_{Y\times Y})$ we have 
$
\dgh(\rX,\rY)\le  \sup_{x\in X} d(x,Y).
$
By \refP{diam},
$
\sup_{v\in v(\bR_r)} r^{-1/4} d_{\bR_r}(v,\bS_r)\convp 0,
$
and the result follows.
\end{proof}

\subsection{Analogous results for the largest $2$-connected block}\label{sec:analogous}
\addtocontents{toc}{\SkipTocEntry}

By analogy to Propositions~\ref{secondlargest} and~\ref{diam}, we have the following bounds for the submaps pendant to the largest $2$-connected block in a uniform quadrangulation.

Given $\rQ_q=(Q_q,e_q)\in\cQ_q$, write $\rR_q = \rR(\rQ_q)$, let $e'$ be the $\prec_{\rQ_q}$-minimal oriented edge of $\rR_q$, and write $\rQ_q'=(Q_q,e')$. By \refP{decom2}, $\rQ_q'$ uniquely decomposes as 
\[
\left(\rR_q, \left((L_i,b_i):0\le i\le 2|e(\rR_q)|-4\right) \right)~,
\]
where $L_i=(\rM_{i,j}:1\le j\le l_i)\in \cQ^{l_i}$ and $b_i=(b_{i,j}:1\le j\le l_i)\in  \{0,1\}^{l_i}$, and  $(l_i:0 \le i \le 2|e(\rR_q)|-4)$ are suitable non-negative integers. Write 
\begin{equation}\label{Lprime}
L'(\rQ_q) = \max\left\{|v(\rM_{i,j})|: 0\le i\le 2|e(\rR_q)|-4,1\le j\le l_i \right\} ~,
\end{equation}
\begin{equation}\label{Dprime}
D'(\rQ_q)= \max\left\{\diam(\rM_{i,j}): 0\le i\le 2|e(\rR_q)|-4,1\le j\le l_i \right\}~.
\end{equation}

\begin{prop}\label{secondlargest2}
For all $\veps\in(0,1/3)$, there exist positive constants $c_1$, $c_2$, and $c_3=c_3(\veps)$ such that, if $\bQ_q\in_u \cQ_{q,r(q)}$,
\[
\p{ L'(\bQ_q)\ge q^{2/3+\veps}}\le c_1  \exp\left(-c_2 q^{c_3}\right)~.
\]
\end{prop}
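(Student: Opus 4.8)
The plan is to follow the proof of \refP{secondlargest} with the simple-block decomposition (\refP{decom1}) replaced by the $2$-connected-block decomposition (\refP{decom2}), and the schema $(C,B,U)$ replaced by $(M,C,H)$. Fix $\veps\in(0,1/3)$ and, for positive integers $q$ and $x\le q-r(q)+2$, set $\cL'_{q,x}=\{\rQ\in\cQ_{q,r(q)}:L'(\rQ)=x\}$, so that
\[
\p{L'(\bQ_q)\ge q^{2/3+\veps}}=\frac{1}{|\cQ_{q,r(q)}|}\sum_{x=\lceil q^{2/3+\veps}\rceil}^{q-r(q)+2}|\cL'_{q,x}|.
\]
The denominator is handled exactly as in \eqref{eq2}: since $r(q)=7q/15+O(q^{2/3})$, \refP{airyprop1} together with the positivity of the Airy density $\cA$ on bounded intervals gives $|\cQ_{q,r(q)}|=\Theta(q^{-2/3})\,|\cQ_q|$, while \refP{expansionquote}, \refP{schema1} and \refL{expansions} (noting that $r_M=r_H=1/12$) give $|\cQ_q|=[z^{q-2}]M(z)=\Theta(q^{-5/2})\,12^q$, so $|\cQ_{q,r(q)}|=\Theta(q^{-19/6})\,12^q$.

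Next I would bound $|\cL'_{q,x}|$ combinatorially, as in the derivation of \eqref{Lrx}. Given $\rQ\in\cL'_{q,x}$, re-root at the $\prec_{\rQ}$-minimal oriented edge of $\rR(\rQ)$, apply \refP{decom2}, and delete a largest pendant submap $\rM^*$ (which has $x$ vertices), recording its position in the decoration (at most $2q$ choices), its attachment bit, and the location of the original root edge (at most $4q$ choices). By \eqref{eq:edge_count} the residual object lies in $\cQ_{q-x+2,\,r(q)}$, so
\[
|\cL'_{q,x}|\le O(q^2)\cdot|\cQ_x|\cdot|\cQ_{q-x+2,\,r(q)}|.
\]

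The main step, and the main obstacle, is to control $|\cQ_{q-x+2,\,r(q)}|$ when $x\ge q^{2/3+\veps}$, where the core size $r(q)$ is deep in the right tail relative to the total size $p:=q-x+2$. For this I would establish, by the same argument as \refP{pendant2}, the $(M,C,H)$-analogue: for any $A>0$ there are $c_1,c_2>0$ with $\p{\b(\bQ_p)=k}\le c_1\exp(-c_2\,p(k/p-7/15)^3)$ for all integers $k\in(7p/15+Ap^{2/3},\,p]$. For $k\le p-p^{2/3}\lambda(p)$ (with $\lambda\to\infty$, $\lambda(p)\le p^{1/3}/(\log p)^2$) this follows from \refP{airyprop1}, \refC{quotecor}, and the fact that $k>p/2$ forces at most one $2$-connected block of size $k$, so the law of $\b(\bQ_p)$ is comparable to that of the root $2$-connected block (governed by \refC{quotecor}) up to the factor $(p-2)/(k-2)\le 2$. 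For $p-p^{2/3}\lambda(p)<k\le p$ I would argue directly, as in the second half of the proof of \refP{pendant2}: build each element of $\cQ_{p,k}$ from a core in $\cR_k$, a family of pendant submaps whose excesses $|v(\cdot)|-1$ sum to $d:=p-k<p/(\log p)^2$, the attachment data, and a root edge; using $|\cR_k|=\Theta(k^{-5/2})r_C^{-k}$, the crude bound $|\cQ_v|\le O(1)\,12^v$ and the submultiplicativity $|\cQ_s|\,|\cQ_t|\le O(1)\,|\cQ_{s+t}|$, one obtains $|\cQ_{p,k}|\le|\cR_k|\exp(O(p/\log p))$, and dividing by $|\cQ_p|=\Theta(p^{-5/2})12^p$ — recalling that $12\,r_C=324/196>1$ and that $k/p-7/15=\Theta(1)$ throughout this range — yields the stretched-exponential bound. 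Applying this with $k=r(q)$ and $p=q-x+2$, and using $r(q)-7p/15=7x/15+O(q^{2/3})\gg p^{2/3}$ for $x\ge q^{2/3+\veps}$, gives $\p{\b(\bQ_p)=r(q)}\le c_1\exp\bigl(-c_2(q-x)(r(q)/(q-x+2)-7/15)^3\bigr)$.

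Finally I would assemble the pieces. Since $x+p=q+2$, one has $|\cQ_x|\,|\cQ_{q-x+2,r(q)}|=\Theta((xp)^{-5/2})\,12^{q+2}\,\p{\b(\bQ_p)=r(q)}$, whence
\[
\p{L'(\bQ_q)\ge q^{2/3+\veps}}\le (\text{poly in }q)\cdot\sum_{x\ge q^{2/3+\veps}}(xp)^{-5/2}\,\p{\b(\bQ_p)=r(q)}.
\]
The exponent $(q-x)\bigl(r(q)/(q-x+2)-7/15\bigr)^3$ is increasing in $x$ over the summation range and equals $\Theta(q^{3\veps})$ at $x=q^{2/3+\veps}$, so $\p{\b(\bQ_p)=r(q)}\le c_1\exp(-c_2'q^{3\veps})$ uniformly in $x$; since $\sum_x(xp)^{-5/2}=O(1)$, the polynomial prefactor is absorbed into the exponential and one concludes $\p{L'(\bQ_q)\ge q^{2/3+\veps}}\le c_1\exp(-c_2q^{c_3})$ with $c_3=c_3(\veps)$ any constant in $(0,3\veps)$. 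The genuinely new ingredient is the right-tail bound for the largest $2$-connected block of a uniform connected quadrangulation — the $(M,C,H)$-analogue of \refP{pendant2} — which is proved exactly as \refP{pendant2} is, but must be redone for this schema, including the passage from the largest block to the root block in the range where \refC{quotecor} is invoked; everything else is routine bookkeeping with \refP{decom2} and the singularity estimates of Table~\ref{table2}.
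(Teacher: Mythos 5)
Your proposal follows the route the paper intends: the paper omits this proof, saying only that the argument is ``very similar'' to the one for \refP{secondlargest}, and your adaptation — replacing the $(C,B,U)$ schema by $(M,C,H)$ and identifying the $(M,C,H)$-analogue of \refP{pendant2} (a right-tail bound for $\p{\b(\bQ_p)=k}$) as the genuinely new ingredient — is exactly the intended adaptation, with the rest being bookkeeping. You are also right to flag that \refC{quotecor} controls the \emph{root} block $X_p$, not the \emph{largest} block $\b(\bQ_p)$, so that some transfer argument is needed.

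However, the transfer argument you give has a gap. You justify the passage from $X_p$ to $\b(\bQ_p)$ by the observation that ``$k>p/2$ forces at most one $2$-connected block of size $k$''. That is available in the setting of \refP{pendant2}, where $\alpha_0=5/7>1/2$ makes $k>r/2$ automatic on the whole range; but for $(M,C,H)$ we have $\alpha_0=7/15<1/2$, and the initial part of your range $k\in(7p/15+Ap^{2/3},\,p/2]$ allows two disjoint-except-for-a-cut-vertex $2$-connected blocks of size $k$, so uniqueness (and hence your comparison with constant $\le 2$) fails there. The transfer is still true with an $O(1)$ loss, but by a different mechanism: a $2$-connected block on $k$ vertices contributes at least $2(k-2)$ (oriented: $4(k-2)$) edges to $\bQ_p$, so by re-rooting invariance
\[
\p{\exists\ \text{block of size }k}\ \le\ \frac{\E{N_k}}{4(k-2)}\ =\ \frac{p-2}{k-2}\,\p{X_p=k},
\]
where $N_k$ counts oriented edges whose block has $k$ vertices; and $\frac{p-2}{k-2}=O(1)$ uniformly for $k\ge 7p/15$. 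With this substitution your proof of the $(M,C,H)$-analogue of \refP{pendant2}, and hence of \refP{secondlargest2}, goes through. (A harmless further remark: by \eqref{eq:edge_count} a decoration of $(M,C,H)$ type with $x$ vertices contributes $x-1$, not $x-2$, to the vertex count of the composed object, so the residual after deleting the largest pendant submap lies in $\cQ_{q-x+1,\,r(q)}$ rather than $\cQ_{q-x+2,\,r(q)}$; this is immaterial to the estimates.)
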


\begin{prop}\label{diam2}
Let $\bQ_q\in_u \cQ_{q,r(q)}$, then there exist positive constants $C_1$, $C_2$, and $C_3$ such that
\[
\p{ D'(\bQ_q) \ge q^{5/24}}\le C_1  \exp\left(-C_2 q^{C_3}\right)~.
\]
\end{prop}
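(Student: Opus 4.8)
The plan is to mimic the proof of \refP{diam} almost line for line, exploiting three simplifications afforded by working with the $2$-connected core rather than the simple core. First, the decomposition now used is $\psi$ of \refP{decom2} rather than $\varphi$, so the pendant submaps $\rM_{i,j}$ are arbitrary connected quadrangulations; second, because of this we may apply \refP{citediam} (the diameter tail for uniform elements of $\cQ_q$) directly to each decoration, bypassing the transfer \refL{2condiam} and the polynomial loss it incurs; third, \refP{secondlargest2} replaces \refP{secondlargest}.

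Concretely, I would fix $\veps \in (0,1/7)$, take $\bQ_q = (Q_q, e_q) \in_u \cQ_{q,r(q)}$, re-root at the $\prec_{\bQ_q}$-minimal oriented edge $e'$ of $\bR_q := \rR(\bQ_q)$, and apply \refP{decom2} to write $(Q_q,e')$ uniquely as $(\bR_q, ((L_i,b_i) : 0 \le i \le 2r(q)-4))$ with $L_i = (\rM_{i,j} : 1 \le j \le l_i) \in \cQ^{l_i}$. Let $\cF$ be the $\sigma$-field generated by the ``shape'' of this decomposition, namely by $\bR_q$, the vectors $(l_i,b_i)$, and all sizes $|v(\rM_{i,j})|$. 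Two elementary structural facts drive the argument: (a) since $\psi$ is a bijection and the event $\bQ_q \in \cQ_{q,r(q)}$ only constrains the $\bR_q$-component, conditionally on $\cF$ the decorations $(\rM_{i,j})$ are independent with $\rM_{i,j}$ uniform on $\cQ_{|v(\rM_{i,j})|}$; and (b) by the edge-count identity \eqref{eq:edge_count}, every $\rM_{i,j}$ with $|v(\rM_{i,j})| \ge 3$ contributes at least $4$ to $|e(\bQ_q)| = 2q-4$, so the random number $N$ of such decorations satisfies $N \le q$ deterministically.

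Then I would split: $\p{D'(\bQ_q) \ge q^{5/24}}$ is at most $\p{L'(\bQ_q) \ge q^{2/3+\veps}}$, which \refP{secondlargest2} bounds by $c_1\exp(-c_2 q^{c_3})$, plus the probability that $D'(\bQ_q) \ge q^{5/24}$ while every decoration has fewer than $q^{2/3+\veps}$ vertices. For the second piece, single-edge decorations have diameter $1 < q^{5/24}$ for $q$ large, so a union bound (via $\e[\,\cdot \mid \cF\,]$, using fact (a)) over the at most $N \le q$ decorations $\rM_{i,j}$ with $3 \le k := |v(\rM_{i,j})| < q^{2/3+\veps}$ reduces everything to bounding $\p{\diam(\rM_{i,j}) \ge q^{5/24} \mid \cF}$. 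Writing $q^{5/24} = y\,k^{1/4}$ with $y = q^{5/24}k^{-1/4} \ge q^{5/24 - (2/3+\veps)/4} = q^{1/24 - \veps/4} \ge q^{1/168}$ (here $\veps < 1/7$ enters exactly as in \refP{diam}), \refP{citediam} applied to the conditionally-uniform $\rM_{i,j} \in \cQ_k$ gives $\p{\diam(\rM_{i,j}) \ge q^{5/24} \mid \cF} \le C_1\exp(-C_2 q^{1/168})$ once $q$ is large enough that $q^{1/168} > y_0$. Summing the $\le q$ terms and taking expectations yields $O\!\left(q\exp(-C_2 q^{1/168})\right)$; combining with the first piece and adjusting constants gives the claimed bound.

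The computations are routine; the only steps I would actually spell out are in the second paragraph — verifying the conditional independence and uniformity of the decorations under the law on $\cQ_{q,r(q)}$ (rather than on $\cQ_q$), and bounding the random number of non-trivial decorations by a deterministic $O(q)$ so that the union bound is of fixed size. Neither is a real obstacle, but they are the points a careful reader will want justified; everything else is a transcription of the argument already given for \refP{diam}.
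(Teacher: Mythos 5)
Your proposal is correct and follows the route the paper indicates (it says only that the argument for \refP{diam2} is ``very similar'' to that for \refP{diam}). The two simplifications you flag are genuine: the decorations now lie in $\cQ$ rather than $\cR$, so \refP{citediam} applies directly with no analogue of \refL{2condiam} and no polynomial loss, and the edge-count identity does bound the number of non-trivial decorations by $O(q)$ so the union bound has deterministically bounded length. The one place to push back is the parenthetical justification of your fact (a): the claim that the event $\bQ_q \in \cQ_{q,r(q)}$ ``only constrains the $\bR_q$-component'' is not literally true, since $\b(\bQ_q)=r(q)$ also forbids any decoration from containing a $2$-connected block of $r(q)$ or more vertices (and, in the event of ties, forces $\bR_q$ to win the $\prec$-tie-break). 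What actually rescues the conditional uniformity is precisely your restriction to the $\cF$-measurable event $\{L'(\bQ_q)<q^{2/3+\veps}\}$: on that event every decoration has far fewer than $r(q)\sim 7q/15$ vertices, so the constraint on decorations is vacuous and ties cannot occur, and the product-of-uniforms description of the conditional law holds exactly there. This is the same point the paper's proof of \refP{diam} passes over silently when it asserts ``given $|v(\Theta_i)|=k$, $\Theta_i$ is uniformly distributed over $\cR_k$'', so your treatment is at the paper's own level of rigor; but the justification you offer for (a) should be replaced by the argument just given rather than by the claim that the conditioning does not touch the decorations.
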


We omit proofs for the above propositions since the arguments are very similar as those for Propositions~\ref{secondlargest} and \ref{diam}.

\section{Exchangeable decorations}\label{sec:exch_measures}
This section provides bounds on the Prokhorov distance between three sorts of measures on the vertices of a graph: the uniform measure, the degree-biased measure, and measures obtained by assigning vertices exchangeable random masses. In subsequent sections, these bounds help control the GHP distance between a map and its largest block. 

Recall from the introduction that for a map $G$, and $c > 0$, $cG$ denotes the measured metric space $(v(G),c\cdot d_G,\mu_G)$. 
Given a map $G$, write $\deg_G(v)$ for the degree of $v\in v(G)$ in $G$; the {\em degree-biased measure} on $G$ is the measure $\mu^B_G$ on $v(G)$ satisfying $\mu^B_G(S) = \sum_{v \in S} \mathrm{deg}_G(v)/(2|e(G)|)$. 
\begin{lem}\label{lem:unifmeas}
For any quadrangulation $Q$ and any $\eps > 0$, with $\mu_G$ and $\mu^B_G$ viewed as measures on $\eps Q$, we have 
\[
d_{\mathrm{P}}(\mu_Q,\mu_Q^B) \le \max\{\eps,1/|v(Q)|\}.
\]
\end{lem}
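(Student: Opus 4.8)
The plan is to bound the Prokhorov distance directly from the definition by exhibiting, for a suitable $\delta$, that $\mu_Q(A) \le \mu_Q^B(A^\delta) + \delta$ and $\mu_Q^B(A) \le \mu_Q(A^\delta) + \delta$ for every Borel set $A \subset v(Q)$. Since $Q$ is a quadrangulation on $n := |v(Q)|$ vertices, it is bipartite, so $v(Q)$ splits into two colour classes $V_0, V_1$ with every edge joining the two classes. The key combinatorial identity is that for a bipartite quadrangulation, $\sum_{v \in V_0} \deg_Q(v) = \sum_{v \in V_1} \deg_Q(v) = |e(Q)|$, because each edge contributes exactly one endpoint to each side. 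Consequently $\mu_Q^B(V_0) = \mu_Q^B(V_1) = 1/2$, and more usefully, the degree-biased mass is balanced across the two colour classes in a way that matches counting.

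First I would observe that it suffices to handle a single vertex: for any $v \in v(Q)$ and its ``quadrangulation neighbour structure'', the contribution of $v$ to $\mu_Q^B$ is $\deg_Q(v)/(2|e(Q)|) = \deg_Q(v)/(4(n-2))$, while its contribution to $\mu_Q$ is $1/n$. The point is that $v$'s degree-biased mass can be redistributed onto $v$ itself (at scale-$\eps$ distance $0$) together with a controlled discrepancy. Actually the cleanest route: I would show that the $\eps$-neighbourhood $A^\eps$ in the metric $\eps\cdot d_Q$ always contains, for each vertex $v \in A$, all neighbours of $v$ in $Q$ (since adjacent vertices are at $\eps\cdot d_Q$-distance exactly $\eps$, hence lie in $A^\eps$ when we take strict inequality... here I must be careful: $A^\eps$ uses strict inequality $d(x,y) < \eps$, so a neighbour at distance exactly $\eps$ is \emph{not} in $A^\eps$; so I would instead use $A^{\eps'}$ for $\eps' $ slightly larger, or note that for the Prokhorov bound we may take the infimum and any $\delta > \eps$ works, giving the bound with $\max\{\eps,1/n\}$ after passing to the infimum). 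So given $A$, let $A' = A \cup \{$neighbours in $Q$ of vertices in $A\}$; then $A' \subset A^\delta$ for every $\delta > \eps$.

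The main estimate is then: $\mu_Q(A) - \mu_Q^B(A^\delta) \le \mu_Q(A) - \mu_Q^B(A') $, and I want to show this is $\le \max\{\eps, 1/n\}$; symmetrically for the reverse inequality. For the reverse direction, $\mu_Q^B(A) - \mu_Q(A^\delta) \le \mu_Q^B(A) - \mu_Q(A')$, and since $\mu_Q^B(A) \le \mu_Q^B(v(Q)) = 1$ while $\mu_Q(A')$ is at least... this needs the bipartite balancing. Here is the crux: using the colour classes, one shows that for any set $A$, if $A' \supseteq A$ together with neighbours, then $\mu_Q(A') \ge \mu_Q^B(A) - 1/n$ roughly, because a vertex of degree $d$ in $A$ ``spreads'' its degree-biased mass $d/(4(n-2))$, and the set of vertices it or its neighbours occupy has $\mu_Q$-mass at least $(d+\text{something})/n$; an averaging/double-counting argument over all vertices, exploiting that $\sum \deg = 4(n-2)$ so the average degree is $4(n-2)/n < 4$, should close the gap up to the additive $1/n$ term. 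The hard part — the step I expect to be the main obstacle — is making this last double-counting rigorous and extracting exactly the clean bound $\max\{\eps, 1/|v(Q)|\}$ rather than a constant times it: one must carefully track that the per-vertex discrepancy between the two measures is at most $1/n$ in the right direction after the $\eps$-inflation absorbs the neighbour structure, and handle the degenerate small-$n$ cases (e.g. $Q$ a single edge) separately. Once the two inequalities $\mu_Q(A) \le \mu_Q^B(A^\delta) + \delta$ and $\mu_Q^B(A) \le \mu_Q(A^\delta) + \delta$ are established for all $\delta > \max\{\eps, 1/n\}$ and all Borel $A$, taking the infimum over such $\delta$ gives $d_{\mathrm P}(\mu_Q, \mu_Q^B) \le \max\{\eps, 1/|v(Q)|\}$, completing the proof.
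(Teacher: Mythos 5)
Your outline gets the framing right — you correctly observe that since $A^\eps$ is defined with a strict inequality, a $d_Q$-neighbour of $A$ lies outside $A^\eps$ but inside $A^\delta$ for any $\delta > \eps$, and that because the Prokhorov distance is an infimum it suffices to verify the two inequalities for every $\delta > \max\{\eps, 1/|v(Q)|\}$. (One simplification you miss: only one of the two Prokhorov inequalities need be checked; the other follows by applying the first to $(A^\delta)^c$ and noting $((A^\delta)^c)^\delta \subset A^c$.) But the proof stops exactly where the mathematical content begins. You write that ``an averaging/double-counting argument \ldots should close the gap'' and explicitly flag this as ``the main obstacle''; that obstacle is the entire lemma, and the route you gesture at does not surmount it.

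The bipartite colour-class identity $\sum_{v\in V_0}\deg_Q(v)=\sum_{v\in V_1}\deg_Q(v)=|e(Q)|$, while true, holds for every bipartite graph and gives no leverage: the bound in this lemma is genuinely a statement about \emph{planar} quadrangulations, and the proof must use planarity, not bipartiteness. The argument the paper actually runs is the following. With $n$ the number of faces (so $|v(Q)|=n+2$, $|e(Q)|=2n$), set $V^+=\{v:d_Q(v,V)\le 1\}$ and establish $\mu_Q^B(V^+)\ge\mu_Q(V)-1/|v(Q)|$. For $Q[V]$ connected with $p=|V|\ge 2$, one views $Q[V]$ as a quadrangulation with boundary: with $i$ internal faces, total boundary length $\ell$, and $k$ boundary components, Euler's formula yields $p=i+2+\ell/2-k$; a boundary face meets at most two edges of $Q[V]$, so there are at least $\ell/2$ boundary faces; all these faces are internal to $Q[V^+]$, so $\sum_{v\in V^+}\deg_Q(v)\ge 4(i+\ell/2)=4(p+k-2)\ge 4(p-1)$. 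Dividing by $4n$ and comparing with $p/(n+2)$ gives the claimed one-sided bound, and the disconnected case is handled component-wise. Nothing in your sketch produces an estimate of this form, and absent it — or a genuine substitute — the proof does not go through. Your approach would need to be replaced, not just made rigorous.
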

\begin{proof}
Let $n$ be the number of faces of $Q$, so that $|v(Q)|=n+2$ and $|e(Q)|=2n$. Fix $V \subsetneq v(Q)$ (the remaining case is trivial). A face $f$ of $Q$ is an {\em internal} face of $Q[V]$ if all vertices of $f$ lie in $V$; 
it is a {\em boundary face} of $Q[V]$ if some edge of $Q[V]$ is incident to $f$, but not all edges incident to $f$ belong to $Q[V]$.

Write $V^+=\{v\in v(Q): d_Q(v,V)\le 1\}$, and note that $V \subset V^+$. We claim that 
\begin{equation}\label{mass_ineq}
\mu_Q^B(V^+) \ge \mu_Q(V)-\frac{1}{|v(Q)|}. 
\end{equation}
If this is so then in $\eps Q$ we obtain 
$\mu_Q(V) \le \mu_Q^B(V^\eps)+1/|v(Q)|$; since $V$ was arbitrary,  the lemma then follows easily. 
We now prove (\ref{mass_ineq}).

First suppose $Q[V]$ is connected, and write $p=|V|$. If $p=1$ then the inequality is easily checked. 
If $p\ge 2$ then view $Q[V]$ as a quadrangulation with boundaries; let the boundaries have lengths $\ell_1,\ldots,\ell_k$ and write $\sum_{i=1}^k \ell_i=\ell$. 
We have $k \ge 1$ since $V\ne v(Q)$. 

Writing $i$ for the number of internal faces of $Q[V]$, Euler's formula straightforwardly yields $p=i+2+\ell/2-k$.
Furthermore, if $f$ is a boundary face of $Q[V]$ then all edges of $f$ lie within $Q[V^+]$. 
Now, a boundary face can be incident to at most two edges of $Q[V]$, so $Q[V]$ must have at least $\ell/2$ boundary faces. 
It follows that 
\[
\sum_{v \in V^+} \mathrm{deg}_Q(v) \ge \sum_{f \mbox{ internal to } Q[V^+]} 4 \ge 4(i+\ell/2) = 4(p+k-2) \ge 4(p-1). 
\]
Since $\sum_{v \in v(Q)} \mathrm{deg}_Q(v) = 2|e(Q)|=4n$, it follows that 
\[
\mu_Q^B(V) \ge \frac{p-1}{n} =\mu_Q(V)-\frac{n+2-2p}{n(n+2)} \ge \mu_Q(V) - \frac{1}{|v(Q)|}\, . 
\]
Finally, if $Q[V]$ is not connected, the same argument applied component-wise yields the same bound. 
\end{proof}
Since both $1/s \to 0$ and $(\frac{3}{8s})^{1/4} \to 0$ as $s \to \infty$, the following is immediate. 
\begin{cor}\label{cor:exchg}
For $\rS_s\in \cS_s$, with $\mu_{\rS_s}$ and $\mu_{\rS_s}^B$ viewed as measures on $\left(\frac{3}{8s}\right)^{1/4} \rS_s$, we have $d_{\rP}(\mu_{\rS_s},\mu_{\rS_s}^B) \to 0$ as $s\to\infty$. 
\end{cor}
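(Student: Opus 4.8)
The plan is to deduce this directly from \refL{lem:unifmeas}. A simple quadrangulation is in particular a quadrangulation, so I would apply the lemma with $Q = \rS_s$ and with $\eps = \left(\frac{3}{8s}\right)^{1/4}$. This immediately yields
\[
d_{\rP}\left(\mu_{\rS_s},\mu_{\rS_s}^B\right) \le \max\left\{\left(\frac{3}{8s}\right)^{1/4},\ \frac{1}{|v(\rS_s)|}\right\},
\]
where both measures are regarded as measures on $\left(\frac{3}{8s}\right)^{1/4} \rS_s$, i.e.\ with respect to the rescaled graph metric, exactly as in the definition of $\eps Q$. Since $\rS_s \in \cS_s$ we have $|v(\rS_s)| = s$ exactly, so the right-hand side equals $\max\{(3/(8s))^{1/4},\, 1/s\}$, and both terms tend to $0$ as $s \to \infty$. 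Hence $d_{\rP}(\mu_{\rS_s},\mu_{\rS_s}^B) \to 0$, which is the assertion.

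There is essentially no obstacle here: the substantive content is already contained in \refL{lem:unifmeas}, which is stated for an arbitrary quadrangulation and an arbitrary positive $\eps$ with no side conditions, so nothing needs to be checked beyond the trivial facts that $\rS_s$ is a quadrangulation and that $|v(\rS_s)| = s$. The only reason to isolate this statement is that the particular scaling factor $(3/(8s))^{1/4}$ is the one under which uniform simple quadrangulations converge to the Brownian map (by the result of \cite{ABA}); recording the corollary in this form is what will later let us replace $\mu_{\rS_s}$ by the degree-biased measure $\mu^B_{\rS_s}$ when identifying the GHP limit.
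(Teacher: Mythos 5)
Your proof is correct and is exactly the paper's argument: the corollary is an immediate application of Lemma~\ref{lem:unifmeas} with $Q=\rS_s$ and $\eps=(3/(8s))^{1/4}$, noting $|v(\rS_s)|=s$ and that both $(3/(8s))^{1/4}$ and $1/s$ tend to $0$.
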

In what follows, for a vector $\mathbf{x}=(x_1,\ldots,x_k) \in \R^k$ write $|\mathbf{x}|_p=(\sum_{i=1}^k x_i^p)^{1/p}$. 
Suppose that $\rG=(G,{e})$ is a rooted map. Enumerate the edges of $G$ as $e_1,\ldots,e_m$, where $m=|e(G)|$, and let $e_0$ be a second copy of the root edge $e$. (This makes sense even if $G$ is random, as long as it is possible to specify a canonical way to order the edges of $G$; for example, we may use the order $\prec_\rG$ described in the introduction.) 

For each $0 \le i \le m$, let $w_i$ be a uniformly random endpoint of $e_i$. Let $\bn=(n_0,\ldots,n_{m})$ be a vector of non-negative real numbers with $|\bn|_1>0$. Define a (random) probability measure $\nu_G^\bn$ on $v(G)$ as follows: for $V \subset v(G)$, let
\begin{equation}\label{eq:nu_const}
\nu_G^{\bn}(V) = \frac{1}{|\bn|_1}\cdot\sum_{\{i:w_i \in V\}} n_i.
\end{equation}
If one views $(w_i:0 \le i \le 2s-4)$ as attachment locations for pendant submaps, and $\bn$ as listing the masses of these submaps, then $\nu_{G}^{\bn}$ is the probability measure assigning each vertex $v$ a mass proportional to the total mass of submaps pendant to $v$. 

\begin{lem}\label{lem:one_or_two_deterministic}
Let $\rG=(G,{e})$ have $|e(G)|=m$ and let $\bn=(n_{0},\ldots,n_{m})$ be an exchangeable random vector of non-negative real numbers with $|\bn|_2$ strictly positive. Then for any $V\subset v(G)$,
\[
\p{|\nu_{G}^{\bn}(V)-\mu_{G}^B(V)| >\frac{2t}{|\bn|_1}+\frac{1}{m+1}~\bigg\vert~ |\bn|_2} \le 4\exp\pran{-\frac{2t^2}{|\bn|_2^2}}\, .
\]
\end{lem}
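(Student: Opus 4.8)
The plan is to compare $\nu_G^{\bn}$ with $\mu_G^B$ by a coupling through a common random object, namely the set $W=\{w_i:0\le i\le m\}$ together with the multiplicities with which each vertex is chosen. First I would record the elementary identity that, for a \emph{fixed} vertex $v\in v(G)$, if we choose $w_i$ to be a uniformly random endpoint of $e_i$ independently over $i$ (including the extra copy $e_0$ of the root edge), then $\e[\,\#\{i:w_i=v\}\,]=\tfrac12\deg_G(v)+\tfrac12\I{v\in e}$. Hence for $V\subset v(G)$,
\[
\e\Big[\sum_{\{i:w_i\in V\}}1\Big]=\tfrac12\sum_{v\in V}\deg_G(v)+\tfrac12\#(V\cap e)=|e(G)|\,\mu_G^B(V)+\tfrac12\#(V\cap e),
\]
so that $\e\big[\tfrac{1}{m+1}\#\{i:w_i\in V\}\big]$ differs from $\mu_G^B(V)$ by at most $\tfrac1{m+1}$ (using $|e(G)|=m$ and $\#(V\cap e)\le 2$). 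This explains the additive $\tfrac1{m+1}$ term in the statement.

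Next I would condition on $|\bn|_2$ and work with the conditional law of $\bn$, which remains exchangeable. The key point is that $\nu_G^{\bn}(V)=\tfrac1{|\bn|_1}\sum_{\{i:w_i\in V\}}n_i$ is, given the locations $(w_i)$, a sum of a subset (of size $K:=\#\{i:w_i\in V\}$) of the exchangeable coordinates of $\bn$, normalized by $|\bn|_1$. By exchangeability, conditionally on $(w_i)$ this is distributed as $\tfrac1{|\bn|_1}$ times the sum of $K$ coordinates sampled \emph{without replacement} from $\bn$. I would invoke the concentration inequality of Aldous for sums of exchangeable (sampled-without-replacement) random variables — this is the result alluded to in the proof sketch — to get, conditionally on $\bn$ and on $(w_i)$,
\[
\p{\Big|\tfrac1{|\bn|_1}\textstyle\sum_{\{i:w_i\in V\}}n_i-\tfrac{K}{(m+1)|\bn|_1}\sum_{j=0}^m n_j\Big|>\tfrac{t}{|\bn|_1}}\le 2\exp\!\Big(-\tfrac{2t^2}{|\bn|_2^2}\Big),
\]
i.e. $\nu_G^{\bn}(V)$ is within $t/|\bn|_1$ of $\tfrac{K}{m+1}$ with the stated Gaussian tail, since the without-replacement mean of the chosen coordinates is $\tfrac{1}{m+1}\sum_j n_j=\tfrac{|\bn|_1}{m+1}$ and the relevant range parameter is controlled by $|\bn|_2$.

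Then I would handle $K/(m+1)=\tfrac1{m+1}\#\{i:w_i\in V\}$ separately: this is a function of the independent endpoint choices $(w_i)$ only, it has mean within $\tfrac1{m+1}$ of $\mu_G^B(V)$ by the first step, and it is a sum of $m+1$ independent $\{0,1\}$-valued variables, so bounded-differences (Hoeffding/Azuma) gives $\p{|\tfrac{K}{m+1}-\mu_G^B(V)|>\tfrac{t}{|\bn|_1}+\tfrac1{m+1}}\le 2\exp(-2t^2/|\bn|_2^2)$ once one checks that each coordinate changes $K$ by at most $1$ and that $\sum_i(1/(m+1))^2=1/(m+1)\le 1/|\bn|_2^2\cdot(\text{something})$ — more carefully, I would absorb the normalization so the comparison is against $t/|\bn|_1$, using $|\bn|_1\le\sqrt{m+1}\,|\bn|_2$ (Cauchy--Schwarz) to relate the two scales. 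Combining the two displays by a union bound and the triangle inequality yields the claimed bound with constant $4$. The main obstacle I anticipate is bookkeeping the two different normalizations ($|\bn|_1$ versus $|\bn|_2$ versus $m+1$) so that both concentration estimates can be phrased against the single threshold $2t/|\bn|_1$; the routing through Cauchy--Schwarz ($|\bn|_1^2\le(m+1)|\bn|_2^2$) is what makes the fluctuation of the purely combinatorial term $K/(m+1)$ fit inside an $O(t/|\bn|_1)$ window with the right sub-Gaussian rate, and getting Aldous's inequality into exactly the sampled-without-replacement form needed here is the one genuinely non-routine input.
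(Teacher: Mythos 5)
Your proof is correct, and it takes a genuinely different route from the paper's. The paper decomposes $\nu_G^{\bn}(V)$ by edge type: a deterministic part over edges internal to $G[V]$ plus a random part over boundary edges (with the $\I{w_j\in V}$ coin flips appearing explicitly), and centers each of these two pieces at deterministic quantities $\tfrac{|e(G[V])|}{m+1}$ and $\tfrac{|\partial_e V|}{2(m+1)}$ before union-bounding. You instead condition on all endpoint choices $(w_i)$ at once, so that $\nu_G^{\bn}(V)$ becomes a sum over a fixed index set of size $K=\#\{i:w_i\in V\}$; exchangeability of $\bn$ given $|\bn|_2$ then makes this a without-replacement sample, and the Aldous convex-ordering plus exponential-moment argument concentrates it around the \emph{random} center $K/(m+1)$. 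You then handle the fluctuation of $K/(m+1)$ about $\mu_G^B(V)$ by a separate Hoeffding bound for a sum of $m+1$ independent indicators, and the Cauchy--Schwarz inequality $|\bn|_1^2 \le (m+1)|\bn|_2^2$ is exactly what converts the resulting rate $\exp(-2(m+1)t^2/|\bn|_1^2)$ into the desired $\exp(-2t^2/|\bn|_2^2)$. Your computation $|\E[K/(m+1)]-\mu_G^B(V)|\le 1/(m+1)$ is correct (the two error terms $-\tfrac{1}{m+1}\mu_G^B(V)$ and $\tfrac{1}{m+1}\p{w_0\in V}$ have opposite sign and each lie in $[0,1/(m+1)]$). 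What your route buys is a conceptually cleaner single application of the exchangeability/Aldous machinery, at the cost of introducing the random center $K/(m+1)$ that then requires a second, purely combinatorial, concentration step; what the paper's route buys is that both centers are deterministic functions of $V$ and all randomness in the concentration is carried by $\bn$ and the boundary coin flips, so no comparison of the scales $|\bn|_1$ and $\sqrt{m+1}\,|\bn|_2$ is needed. Both arguments yield the stated constant $4$ and the stated threshold $\tfrac{2t}{|\bn|_1}+\tfrac{1}{m+1}$.
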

In the proof, we will use the following result of \citet[Proposition 20.6]{A}, which informally says that partial sums constructed by sampling without replacement may be obtained by first sampling with replacement and then taking a suitable projection. 
\begin{prop}[\cite{A}, Proposition 20.6]\label{aldous_prop}
Fix $x_1,\ldots,x_m \in \R$ and $k \in \{1,\ldots,m\}$, let $\sigma$ be a uniformly random permutation of $\{1,\ldots,m\}$, 
and let $I_1,\ldots,I_k$ be independent and uniform on $\{1,\ldots,m\}$. Then there exists a pair of random variables $(X,Y)$ such that $\E{Y|X}=X$ and  
\[
X \eqdist\sum_{j=1}^k x_{\sigma(j)},\quad Y\eqdist \sum_{j=1}^k x_{I_j}\, .
\]
\end{prop}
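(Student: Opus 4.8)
The plan is to exhibit an explicit coupling, constructed in the ``reverse'' direction indicated by the statement: first sample with replacement, and then \emph{complete} the set of sampled indices to a uniformly random $k$-subset. Precisely, I would let $I_1,\dots,I_k$ be i.i.d.\ uniform on $\{1,\dots,m\}$ and set $Y=\sum_{j=1}^k x_{I_j}$; let $D=\{I_1,\dots,I_k\}$ be the set of distinct values and $d=|D|\in\{1,\dots,k\}$; conditionally on $D$, choose a uniformly random $(k-d)$-subset $A$ of $\{1,\dots,m\}\setminus D$ (which is possible since $m\ge k$); and finally set $S=D\cup A$ and $X=\sum_{i\in S}x_i$. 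Writing $N_i=|\{j\le k:I_j=i\}|$ for the multiplicities, we have $N_i=0$ whenever $i\notin D\subseteq S$, so $Y=\sum_{i\in S}N_i x_i$. This $(X,Y)$ is defined on one probability space, so it remains to check the two marginal laws and the conditional-expectation identity.

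\textbf{Marginal of $X$.} Since $\{\sigma(1),\dots,\sigma(k)\}$ is uniformly distributed over $k$-subsets of $\{1,\dots,m\}$, it is enough to show the same for $S$. The law of $D$ is invariant under relabelling $\{1,\dots,m\}$, so conditionally on $|D|=d$ it is uniform over $d$-subsets; the identity $\binom{m}{d}\binom{m-d}{k-d}=\binom{m}{k}\binom{k}{d}$ then shows that $S=D\cup A$ is uniform over $k$-subsets. Hence $X\eqdist\sum_{j=1}^k x_{\sigma(j)}$, while $Y\eqdist\sum_{j=1}^k x_{I_j}$ by construction.

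\textbf{The identity $\E{Y\mid X}=X$.} Because $X$ is a function of $S$, by the tower property it suffices to prove $\E{Y\mid S}=X$, i.e.\ $\E{N_i\mid S}=1$ for every $i\in S$. I would establish this from two symmetry facts. First, conditionally on $D$ the vector $(N_i)_{i\in D}$ is exchangeable (relabelling within $D$ leaves the conditional law of an i.i.d.\ sample unchanged) and sums to $k$, so $\E{N_i\mid D}=k/|D|$ for $i\in D$; moreover $A$ uses only the randomness of $D$, so $N_i$ and $S$ are conditionally independent given $D$, whence $\E{N_i\mid D,S}=(k/|D|)\,\mathbf{1}\{i\in D\}$. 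Second, conditionally on $S$ and on $|D|=d$, the set $D$ is uniform over $d$-subsets of $S$ (by the same counting identity), so $\p{i\in D\mid S,|D|=d}=d/k$ for $i\in S$. Combining the two and averaging over $|D|$ gives $\E{N_i\mid S}=1$ for each $i\in S$, hence $\E{Y\mid S}=\sum_{i\in S}x_i=X$ and therefore $\E{Y\mid X}=X$.

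The step I expect to require the most care is the bookkeeping of the nested conditional laws in the last paragraph — in particular making precise the exchangeability of $(N_i)_{i\in D}$ given $D$, the uniformity of $D$ over subsets of $S$ of a given size, and the conditional independence $N_i\perp S\mid D$. Each of these is an elementary symmetry/counting argument with no analysis involved, so once they are set up cleanly the result follows. (Alternatively, one could bypass the explicit coupling and instead verify that $\sum_{j=1}^k x_{\sigma(j)}$ is dominated by $\sum_{j=1}^k x_{I_j}$ in the convex order and appeal to Strassen's theorem, but the construction above is self-contained and is the version actually used in the sequel.)
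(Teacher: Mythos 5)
Your proposal is correct. Note that the paper itself gives no proof of this statement: it is quoted verbatim from Aldous (Proposition 20.6 of \cite{A}), so there is nothing internal to compare against. Your explicit coupling --- complete the distinct values $D$ of the with-replacement sample to a uniform $k$-subset $S$, then check $\E{N_i\mid S}=1$ for $i\in S$ via the two symmetry facts you isolate --- is the classical Hoeffding-type argument for comparing sampling with and without replacement, and each of the three conditional-law claims you flag (exchangeability of $(N_i)_{i\in D}$ given $D$, uniformity of $D$ over $d$-subsets of $S$ given $S$ and $|D|=d$, and $N_i\perp S\mid D$) does hold by the counting identity $\binom{m}{d}\binom{m-d}{k-d}=\binom{m}{k}\binom{k}{d}$ and relabelling invariance, exactly as you indicate.
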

Aldous \cite{A} notes the following consequence of the preceding proposition, which is what we will in fact use.
\begin{cor}[\cite{Str}, Theorem 2]
Under the conditions of Proposition~\ref{aldous_prop}, for all continuous convex functions $\phi:\R \to \R$, 
\[
\E{\phi(X)} \le \E{\phi(Y)}\, .
\]
\end{cor}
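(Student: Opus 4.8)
The plan is to deduce this immediately from Proposition~\ref{aldous_prop} together with the conditional form of Jensen's inequality. First I would invoke Proposition~\ref{aldous_prop} to produce a pair $(X,Y)$ of random variables, defined on a common probability space, satisfying $\E{Y\mid X}=X$ together with the distributional identities $X\eqdist\sum_{j=1}^k x_{\sigma(j)}$ and $Y\eqdist\sum_{j=1}^k x_{I_j}$. Note that for each outcome $X$ is a sum of $k$ among the finitely many fixed reals $x_1,\ldots,x_m$, and likewise for $Y$; hence both $X$ and $Y$ are bounded random variables, so $\phi(X)$ and $\phi(Y)$ are bounded (a convex function on $\R$ is continuous, hence bounded on the bounded range of $X$ and of $Y$) and in particular integrable. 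This settles all integrability questions.

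Next, since $\phi$ is convex, the conditional Jensen inequality applies: conditionally on $X$,
\[
\phi(X)=\phi\bigl(\E{Y\mid X}\bigr)\le \E{\phi(Y)\mid X}\qquad\text{a.s.}
\]
Taking expectations of both sides gives $\E{\phi(X)}\le\E{\phi(Y)}$. Finally, because $\E{\phi(X)}$ depends only on the law of $X$ and $\E{\phi(Y)}$ only on the law of $Y$, and these laws are the ones in the statement, this is precisely the claimed inequality.

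I do not anticipate a genuine obstacle here; the only point requiring a word of care is the justification of integrability and of the conditional Jensen step, which is handled by the boundedness of $X$ and $Y$ noted above. (If one prefers to avoid even this remark, one can instead apply Proposition~\ref{aldous_prop} and observe that the stated construction yields $Y$ as a mixture over $X$ of laws with mean $X$, and integrate the elementary Jensen bound pointwise.)
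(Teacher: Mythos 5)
Your proof is correct and is precisely the argument the paper has in mind: couple $(X,Y)$ via Proposition~\ref{aldous_prop} so that $\E{Y\mid X}=X$, then apply the conditional Jensen inequality and take expectations, with integrability trivial by boundedness of $X$ and $Y$. The paper merely cites this as a known consequence (Strassen's theorem), so your write-out fills in the standard one-line derivation it relies on.
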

\begin{proof}[{\bf Proof of Lemma~\ref{lem:one_or_two_deterministic}}]
Given $V \subset v(G)$, write $\partial_e V$ for the edge boundary of $S$, i.e., the set of edges $e' \in e(G)$ with one endpoint in $V$ and one in $V^c$. 
By definition, for $0 \le j \le m$, the vertex $w_j$ is a uniformly random endpoint of $e_j$. We have 
\begin{align}\label{eq:nu_xpr}
\nu_{G}^{\bn}(V) =&~ \frac{\sum_{\{j: e_j \in G[V]\}}n_{j} + \sum_{\{j: e_j \in \partial_e V\}} \I{w_j \in V} n_{j}}{|\bn|_1} ~. 
\end{align}
We now show that $\nu_{G}^{\bn}(V)$ is concentrated using Proposition~\ref{aldous_prop}. 
Independently for each $j\ge 1$ let $I_j \in_u\{0,\ldots,m\}$. 
By the exchangeability of $\bn$, it follows that for any continuous convex $\phi:\R \to \R$, 
\[
\E{\phi\bigg(\sum_{\{j: e_j \in G[V]\}}n_{j}\bigg)} \le \E{\phi\bigg(\sum_{\{j: e_j \in G[V]\}}n_{I_j}\bigg)}\, .
\]
Also by exchangeability, 
\[
\E{\sum_{\{j: e_j \in G[V]\}}n_{I_j}} = |\bn|_1 \cdot \frac{|e(G[V])|}{m+1}.
\]
Taking $\phi(x) = e^{cx}$ for $c\coloneq\frac{4t}{|\bn|_2^2}$ and applying Markov's inequality as in \citep[Theorem 2.5]{M} yields Hoeffding's inequality-type bounds for $\sum_{\{j: e_j \in G[V]\}}n_{j}$: 
\begin{align*}
& \p{\Big\vert\sum_{\{j: e_j \in G[V]\}}\frac{n_{j}}{|\bn|_1}- \frac{|e(G[V])|}{m+1}\Big\vert> \frac{t}{|\bn|_1}~\bigg\vert~|\bn|_2} \\
=&~ \p{\Big\vert\sum_{\{j: e_j \in G[V]\}} n_{j} - |\bn|_1\cdot \frac{|e(G[V])|}{m+1}\Big\vert> t~\bigg\vert~|\bn|_2} \\
\le&~ e^{-ct}\cdot \E{\exp\Big(c\cdot \Big\vert\sum_{\{j: e_j \in G[V]\}} n_{j} - |\bn|_1\cdot \frac{|e(G[V])|}{m+1}\Big\vert\Big) ~\bigg\vert~|\bn|_2}\\
\le&~ e^{-ct}\cdot \E{\exp\Big(c\cdot \Big\vert\sum_{\{j: e_j \in G[V]\}} n_{I_j} - |\bn|_1\cdot \frac{|e(G[V])|}{m+1}\Big\vert\Big) ~\bigg\vert~|\bn|_2}\\
\le  &~2\exp\pran{-\frac{2t^2}{|\bn|_2^2}}~. 
\end{align*}
The last inequality follows from a straightforward calculation; see \citep[Lemma 2.6]{M}. 
 The random variables $\I{w_j \in V}$ are iid Bernoulli$(1/2)$, so a reprise of the argument yields 
\begin{align*}
\p{\Big|\sum_{\{j: e_j \in \partial_e V\}} \frac{\I{w_j \in V} n_{j}}{|\bn|_1} - \frac{|\partial_eV|}{2(m+1)}\Big|> \frac{t}{|\bn|_1}~\bigg\vert~|\bn|_2}
& \le  2\exp\pran{-\frac{2t^2}{|\bn|_2^2}}\, .
\end{align*}
We have
\[
\mu_G^B(V) = \frac{1}{2|e(G)|}\sum_{v \in V} \mathrm{deg}(v) = \frac{2 |e(G[V])|+|\partial_e V|}{2m}, 
\]
so 
\[
\left| \mu_{G}^B(V) - \frac{|e(G[V])|}{m+1} - \frac{|\partial_e V|}{2(m+1)}\right| \le \frac{1}{m+1}\, .
\]
Considering (\ref{eq:nu_xpr}), we then have
\begin{align*}
&\p{|\nu_{G}^{\bn}(V)-\mu_{G}^B(V)| > \frac{2t}{|\bn|_1}+\frac{1}{m+1}~\bigg\vert~|\bn|_2}\\
\le&~ \p{\Big|\sum_{\{j: e_j \in \partial_e V\}} \frac{\I{w_j \in V} n_{j}}{|\bn|_1} - \frac{|\partial_eV|}{2(m+1)}\Big|> \frac{t}{|\bn|_1}~\bigg\vert~|\bn|_2} \\
\instep& + \p{\Big|\sum_{\{j: e_j \in G[V]\}}\frac{n_{j}}{|\bn|_1}- \frac{|e(G[V])|}{m+1}\Big|> \frac{t}{|\bn|_1}~\bigg\vert~|\bn|_2}\, .
\end{align*}
Combining the three probability inequalities then proves the lemma.
\end{proof}
It is easily seen that the above lemma applies even for random sets $V$, so long as $V$ is independent of the randomness used to select the endpoints $w_i$ of the edges; we will use this in what follows.

\section{Projection of masses in random quadrangulations}\label{sec:mass_proj}
In this section we apply Lemma~\ref{lem:one_or_two_deterministic} to study projection of masses in large random quadrangulations, and in particular to prove Proposition~\ref{mainprop2}. We begin by stating 
a straightforward corollary of Lemma~\ref{lem:one_or_two_deterministic}. For a metric space $\rX=(X,d)$ and $x \in X$ write $B(x,r;\rX)=\{y: d(x,y) < r\}$. 
\begin{cor}\label{cor:one_or_two}
For $s \in \N$ let $\bn_s=(n_{s,0},\ldots,n_{s,2s-4})$ be an exchangeable random vector of non-negative real numbers. 
Let $\bS_s \in_u \cS_s$, and for $v \in v(\bS_s)$ write $B(v,r) =B(v,r\cdot s^{1/4};\bS_s)$. Conditional on $\bS_s$, let $U$ and $U'$ be independent, uniformly random elements of $v(\bS_s)$. If $|\bn_s|_1 \to \infty$ and $|\bn_s|_2/|\bn_s|_1 \to 0$ then for all $x\ge0$,
\begin{equation}\label{eq:ball1}
\left\vert \nu_{\bS_s}^{\bn_s}\left(B(U,x)\right) - \mu_{\bS_s}^B\left(B(U,x)\right) \right\vert \to 0~,
\end{equation}
\begin{equation}\label{eq:ball2}
\left\vert \nu_{\bS_s}^{\bn_s}\left(B(U,x)\cap B(U',x)\right) - \mu_{\bS_s}^B\left(B(U,x)\cap B(U',x)\right) \right\vert \to 0
\end{equation}
in probability as $s\to\infty$.
\end{cor}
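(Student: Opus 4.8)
The plan is to derive (\ref{eq:ball1}) and (\ref{eq:ball2}) from Lemma~\ref{lem:one_or_two_deterministic}, applied with $G=\bS_s$ and with $V$ taken to be one of the two random balls in the statement, and then to average out the remaining randomness. The observation making this legitimate is that both $B(U,x)$ and $B(U,x)\cap B(U',x)$ are measurable functions of $(\bS_s,U,U')$, which is independent of the coin flips $(w_i)$ entering the definition of $\nu_{\bS_s}^{\bn_s}$; hence, after conditioning on $(\bS_s,U,U')$, one may treat $V$ as a fixed subset of $v(\bS_s)$ and apply Lemma~\ref{lem:one_or_two_deterministic} through the remark following it (the vector $\bn_s$, taken independent of $(\bS_s,U,U')$ in our applications, stays exchangeable under this conditioning). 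Here $m=|e(\bS_s)|=2s-4$, so the additive term $\tfrac1{m+1}$ in the lemma equals $\tfrac1{2s-3}$. I will carry out the argument for $V=B(U,x)$; the case $V=B(U,x)\cap B(U',x)$ is word-for-word the same.

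First I would turn the hypothesis into a deterministic rate: since $|\bn_s|_2/|\bn_s|_1\to0$ in probability and $|\bn_s|_1\to\infty$ (which keeps $\nu_{\bS_s}^{\bn_s}$ well defined with probability tending to $1$), there is a deterministic sequence $\eta_s\downarrow0$ with $\p{E_s}\to1$ for $E_s:=\{0<|\bn_s|_2\le\eta_s|\bn_s|_1\}$. Since the bound in Lemma~\ref{lem:one_or_two_deterministic} holds for every positive value of its parameter $t$, I may take $t=|\bn_s|_2\,\eta_s^{-1/2}$ (a function of $|\bn_s|_2$), obtaining
\[
\Cprob{\left|\nu_{\bS_s}^{\bn_s}(B(U,x))-\mu^B_{\bS_s}(B(U,x))\right|>\frac{2|\bn_s|_2}{|\bn_s|_1\sqrt{\eta_s}}+\frac1{2s-3}}{|\bn_s|_2,\bS_s,U,U'}\le 4\exp\!\left(-\frac2{\eta_s}\right).
\]
Taking expectations, the corresponding unconditional probability is at most $4\exp(-2/\eta_s)\to0$.

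To finish, note that on $E_s$ one has $2|\bn_s|_2/(|\bn_s|_1\sqrt{\eta_s})\le2\sqrt{\eta_s}$, so the event $\{|\nu_{\bS_s}^{\bn_s}(B(U,x))-\mu^B_{\bS_s}(B(U,x))|>2\sqrt{\eta_s}+\tfrac1{2s-3}\}\cap E_s$ is contained in the event of the last display; hence
\[
\p{\left|\nu_{\bS_s}^{\bn_s}(B(U,x))-\mu^B_{\bS_s}(B(U,x))\right|>2\sqrt{\eta_s}+\frac1{2s-3}}\le \p{E_s^c}+4\exp\!\left(-\frac2{\eta_s}\right)\to0.
\]
As $2\sqrt{\eta_s}+\tfrac1{2s-3}\to0$, this gives (\ref{eq:ball1}), and the same computation with $V=B(U,x)\cap B(U',x)$ gives (\ref{eq:ball2}). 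The substantive content is all in Lemma~\ref{lem:one_or_two_deterministic}; here the only points demanding attention are that the ball $V$ is independent of the endpoint randomness $(w_i)$ (so that the lemma applies after conditioning on $(\bS_s,U,U')$) and that the $|\bn_s|_2$-dependent choice of $t$ is admissible — both routine — after which it is a union bound and bookkeeping with the deterministic rate $\eta_s$.
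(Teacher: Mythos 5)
Your proof is correct and follows essentially the same route as the paper's: both reduce to Lemma~\ref{lem:one_or_two_deterministic} with $V$ equal to one of the two random balls, and both exploit the hypothesis $|\bn_s|_2/|\bn_s|_1 \to 0$ to choose a threshold $t$ that kills the exponential while keeping $2t/|\bn_s|_1 \to 0$. The paper simply says "choose $t(s)$ with $t(s)/|\bn_s|_1 \to 0$ and $t(s)/|\bn_s|_2 \to \infty$" (glossing over the randomness of $|\bn_s|_1,|\bn_s|_2$), whereas you make this step precise by extracting a deterministic rate $\eta_s$ via the in-probability hypothesis and then setting $t=|\bn_s|_2\,\eta_s^{-1/2}$, correctly observing that $t$ is allowed to depend on $|\bn_s|_2$ since the lemma's bound is conditional on it; this is a modestly more careful bookkeeping of the same idea, not a different argument.
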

\begin{proof}
Fix $x \ge 0$.
We assumed that $|\bn_s|_1 \to \infty$ and $|\bn_s|_2/|\bn_s|_1 \to 0$; we may therefore choose a sequence $t(s)$ such that 
$t(s)/|\bn_s|_1 \to 0$ and $t(s)/|\bn_s|_2 \to \infty$. Now take $V=B(U,x)$. Recalling that $|e(\bS_s)|+1=2s-3$, for any $\eps > 0$, Lemma~\ref{lem:one_or_two_deterministic} gives 
\begin{align*}
& \limsup_{s \to \infty} \p{|\nu_{\bS_s}^{\bn_s}(B(U,x))-\mu_{\bS_s}^B(U,x)| \ge \eps ~\bigg\vert~|\bn_s|_2 } \\
 \le&~ \limsup_{s \to \infty}\p{|\nu_{\bS_s}^{\bn_s}(V)-\mu_{\bS_s}^B(V)| \ge \frac{2t(s)}{|\bn_s|_1}+\frac{1}{2s-3}~\bigg\vert~|\bn_s|_2} \\
	 \le&~ \limsup_{s \to \infty}  4\exp\pran{-\frac{2t(s)^2}{|\bn_s|_2^2}} \\
	= &0\, ,
\end{align*}
which is (\ref{eq:ball1}). To prove (\ref{eq:ball2}) take $V=B(U,x)\cap B(U',x)$ and argue similarly. 
\end{proof}
\begin{cor}\label{cor:degbias}
Under the assumptions of Corollary~\ref{cor:one_or_two}, with $\nu_{\bS_s}^{\bn_s}$ and $\mu_{\bS_s}^B$ viewed as measures on 
$\left(\frac{3}{8s}\right)^{1/4} \bS_s$, we have $d_{\rP}(\mu_{\bS_s},\nu_{\bS_s}^{\bn_s}) \to 0$ in probability as $s\to\infty$. 
\end{cor}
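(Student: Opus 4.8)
The plan is to reduce, via \refC{cor:exchg}, to comparing $\mu_{\bS_s}^B$ with $\nu_{\bS_s}^{\bn_s}$, and then to bound their Prokhorov distance by partitioning $(3/8s)^{1/4}\bS_s$ into finitely many pieces of small diameter, \emph{chosen measurably from $\bS_s$ alone} (so that they are independent of the random endpoints $(w_i)$ entering the definition of $\nu_{\bS_s}^{\bn_s}$), and applying \refL{lem:one_or_two_deterministic} on each piece. Concretely, \refC{cor:exchg} gives $d_{\rP}(\mu_{\bS_s},\mu_{\bS_s}^B)\to 0$, so by the triangle inequality it suffices to prove $d_{\rP}(\mu_{\bS_s}^B,\nu_{\bS_s}^{\bn_s})\to 0$ in probability, all measures below being viewed on $(3/8s)^{1/4}\bS_s$.

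Fix $\eps>0$ and put $\delta=\eps/3$. Since $(3/8s)^{1/4}\bS_s\convdist\bM$ for the GHP (hence GH) topology and $\bM$ is a.s.\ compact, there is $N=N(\eps)<\infty$ such that, with probability tending to one, $(3/8s)^{1/4}\bS_s$ admits a $\delta$-net of cardinality at most $N$; we work on this event. Choose such a net $\{x_1,\dots,x_K\}$ with $K\le N$ by a fixed rule from $\bS_s$ — for instance greedily, using the canonical vertex order $<_{\bS_s}$ — and let $V_1,\dots,V_K$ be the associated Voronoi partition, ties broken by index. Then $V_i\subseteq B(x_i,\delta)$, so $\diam(V_i)\le 2\delta<\eps$, and $v(\bS_s)=\bigsqcup_{i=1}^K V_i$. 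For any Borel set $A$, with $I=\{i:V_i\cap A\neq\emptyset\}$ one has $A\subseteq\bigcup_{i\in I}V_i\subseteq A^{\eps}$, and since the cells are pairwise disjoint,
\begin{align*}
\mu_{\bS_s}^B(A)&\le\sum_{i\in I}\mu_{\bS_s}^B(V_i)=\sum_{i\in I}\nu_{\bS_s}^{\bn_s}(V_i)+\sum_{i\in I}\bigl(\mu_{\bS_s}^B(V_i)-\nu_{\bS_s}^{\bn_s}(V_i)\bigr)\\
&\le\nu_{\bS_s}^{\bn_s}(A^{\eps})+\eta_s,\qquad\text{where }\eta_s:=\sum_{i=1}^K\bigl|\mu_{\bS_s}^B(V_i)-\nu_{\bS_s}^{\bn_s}(V_i)\bigr|;
\end{align*}
by symmetry the same inequality holds with $\mu_{\bS_s}^B$ and $\nu_{\bS_s}^{\bn_s}$ interchanged, whence $d_{\rP}(\mu_{\bS_s}^B,\nu_{\bS_s}^{\bn_s})\le\eps+\eta_s$ on our event.

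It remains to show $\eta_s\to 0$ in probability. Conditionally on $\bS_s$ each cell $V_i$ is deterministic while $\bn_s$ is exchangeable, so \refL{lem:one_or_two_deterministic} applies with $G=\bS_s$ and $m=|e(\bS_s)|=2s-4$. Since $|\bn_s|_1\to\infty$ and $|\bn_s|_2/|\bn_s|_1\to 0$, choose $t=t(s)$ with $t/|\bn_s|_1\to 0$ and $t/|\bn_s|_2\to\infty$; then for each $i$,
\[
\p{\bigl|\mu_{\bS_s}^B(V_i)-\nu_{\bS_s}^{\bn_s}(V_i)\bigr|>\frac{2t}{|\bn_s|_1}+\frac{1}{2s-3}~\Big|~|\bn_s|_2}\le 4\exp\!\pran{-\frac{2t^2}{|\bn_s|_2^2}},
\]
which tends to $0$. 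As $2t/|\bn_s|_1+1/(2s-3)\to 0$ and there are at most $N$ cells, a union bound over the cells — together with discarding the $o(1)$-probability event that no net of size $\le N$ exists — yields $\eta_s\to 0$ in probability. Hence $d_{\rP}(\mu_{\bS_s}^B,\nu_{\bS_s}^{\bn_s})\le\eps+o_{\mathbb{P}}(1)$; letting $\eps\downarrow 0$ and using \refC{cor:exchg} once more finishes the proof.

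The points needing care are (i) making the partition a measurable function of $\bS_s$, so that the independence hypothesis behind \refL{lem:one_or_two_deterministic} holds cell by cell, and (ii) transferring the uniform bound $N(\eps)$ on the number of cells from the compactness of $\bM$ to $(3/8s)^{1/4}\bS_s$ through GHP convergence; both are routine. I expect the main pitfall to avoid is instead building the partition from balls about uniformly random vertices and quoting \refC{cor:one_or_two} directly: that would force control of the overlaps among the covering balls, which the first- and second-order ball estimates are not enough to provide, whereas a genuine disjoint partition combined with \refL{lem:one_or_two_deterministic} sidesteps this entirely.
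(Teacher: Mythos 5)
Your proof is correct and takes a genuinely different, and arguably cleaner, route than the paper's. Both arguments reduce via \refC{cor:exchg} to comparing $\mu^B_{\bS_s}$ with $\nu^{\bn_s}_{\bS_s}$, and both exploit GHP convergence to the compact $\bM$ to cover $(3/(8s))^{1/4}\bS_s$ by boundedly many regions of small diameter. The paper centers balls $B_i=B(U_{s,i},\veps)$ at iid $\mu_{\bS_s}$-distributed points, disjointifies them as $A_i=B_i\setminus\bigcup_{j<i}B_j$, and controls $|\mu^B_{\bS_s}(A_i)-\nu^{\bn_s}_{\bS_s}(A_i)|$ by a triangle-inequality expansion into single balls and pairwise intersections, invoking \refC{cor:one_or_two}. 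You instead build a Voronoi partition around a greedy $\delta$-net chosen by a fixed rule from $\bS_s$; the cells are then measurable functions of $\bS_s$ alone, hence conditionally independent of the endpoint choices $(w_i)$, so \refL{lem:one_or_two_deterministic} applies cell by cell with no overlap bookkeeping. Your pitfall remark is on target: the paper's step bounding $\bigl|\lambda\bigl(B_i\cap\bigcup_{j<i}B_j\bigr)\bigr|$ by $\sum_{j<i}\bigl|\lambda(B_i\cap B_j)\bigr|$ (with $\lambda=\mu^B_{\bS_s}-\nu^{\bn_s}_{\bS_s}$) is not a consequence of the triangle inequality for a signed measure when the sets $B_i\cap B_j$ overlap, and making it rigorous would require controlling higher-order intersections (or replacing $B_i\cap B_j$ by the disjoint pieces $B_i\cap A_j$ and appealing to the lemma rather than the corollary). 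Your disjoint partition sidesteps this entirely, at the routine cost of producing the net measurably from $\bS_s$ and transferring a covering-number bound through GH convergence. One small point worth spelling out: a greedy $\delta$-net is $\delta$-separated, so its cardinality is bounded by the $\delta/2$-covering number, and it is the latter you should transfer from $\bM$ to $(3/(8s))^{1/4}\bS_s$ to get the uniform bound $N(\eps)$; with that noted, the argument is complete.
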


\begin{proof}
By Corollary~\ref{cor:exchg}, it suffices to show that $d_{\rP}(\mu_{\bS_s}^B,\nu_{\bS_s}^{\bn_s}) \convp 0$. 
To achieve this, we use Corollary~\ref{cor:one_or_two} and the compactness of the Brownian map $\bM=(\cM,d,\mu)$.
For the remainder of the proof we abuse notation by writing $\mu_s=\mu_{\bS_s}^B$ and $\nu_s=\nu_{\bS_s}^{\bn_s}$, for readability. 

Fix $\veps>0$. By \citep[Theorem 1]{ABA}, the triple $\left( v(\bS_s),\left(\frac{3}{8s}\right)^{1/4} d_{\bS_s}, \mu_{\bS_s}\right)$ converges in distribution to $\bM$ as $r\to\infty$. Since $\bM$ is almost surely compact and $\mu$ a.s.\ has support $\cM$, if $(U_i: i\in\N)$ are independent with law $\mu$ then we almost surely have
\[
K_{\infty}:=\inf \left\{k\in\N: \bigcup_{i=1}^k B(U_i,\veps;\bM) = \cM\right\} < \infty. 
\]
For $s\in\N$, let $(U_{s,i}: i\in\N)$ be independent with law $\mu_{\bS_s}$, and let 
\[
K_s = \inf\left\{ k\in\N: \bigcup_{i=1}^k B(U_{s,i},\veps;  (3/8s)^{1/4}\bS_s) = v(\bS_s) \right\}~. 
\] 
The aformentioned distributional convergence and the a.s.\ finiteness of $K_{\infty}$ together imply that there exists $K\in\N$ such that for all $s \in \N$, $\p{K_s>K} < \veps$. 

For $i \ge 1$ let 
\begin{equation}\label{covering}
B_i
=B\left(U_{s,i},\veps;\left(3/(8s)\right)^{1/4}\bS_s\right)\, .
\end{equation}
Let $A_1=B_1$, and for $i > 1$ let $A_i = B_i \setminus \bigcup_{j=1}^{i-1} B_j$. 
Then $A_1,\ldots,A_{K_s}$ is a covering of $v(\bS_s)$ by disjoint sets. 

Suppose that $d_{\rP}(\mu_s,\nu_s)>\veps$. Then there exists a set $S\subset v(\bS_s)$ such that either $\mu_s(S^\veps)<\nu_s(S)-\veps$ or $\nu_s(S^\veps)<\mu_s(S)-\veps$. Since $A_1,\ldots,A_{K_s}$ partition $v(\bS_s)$, there is $j$ such that either
\begin{align*}
\mu_s\left(S^\veps\cap A_j\right) & <\nu_s\left(S\cap A_j\right)-\veps/K_s \le \nu_s(A_j)-\veps/K_s  \mbox{ or }\\
\nu_s\left(S^\veps\cap A_j\right) & <\mu_s\left(S\cap A_j\right)-\veps/K_s \le \mu_s(A_j)-\veps/K_s~.
\end{align*}
For one of these to occur we must have $S\cap A_j\neq \emptyset$. Since $A_j$ has radius at most $\veps$, it follows that $A_j\subset S^\veps$. Thus, either
\[
\mu_s\left(A_j\right)  < \nu_s\left(A_j\right) -\veps/K_s \quad\mbox{ or }\quad
\nu_s\left(A_j\right)  < \mu_s\left(A_j\right) -\veps/K_s~.
\]
This yields the bound
\begin{align}
&\p{d_\rP(\mu_s,\nu_s)>\veps}\notag\\
\le&~\p{\left\vert \mu_s\left(A_j\right)-\nu_s\left(A_j\right)\right\vert>\veps/K_s
\mbox{ for some } 1\le j\le K_s}\notag\\
\le&~\sum_{j=1}^{K} \p{\left\vert \mu_s\left(A_j\right)-\nu_s\left(A_j\right)\right\vert>\veps/K_s,K_s \le K} +\p{K_s>K}\notag\\
\le&~\sum_{j=1}^{K} \p{\left\vert \mu_s\left(A_j\right)-\nu_s\left(A_j\right) \right\vert >\veps/K}+\veps\label{eq:b}~.
\end{align}

By the triangle inequality, for all $1\le i\le K$, 
\begin{align*}
&\left| \mu_s(A_i)-\nu_s(A_i)\right| \\
= &\left\vert \mu_s\left(B_i\setminus \cup_{j=1}^{i-1} B_j\right) - \nu_s\left(B_i\setminus \cup_{j=1}^{i-1} B_j\right) \right\vert\\
\le&~ \left\vert\mu_s(B_i)-\nu_s(B_i)\right\vert 
+ \left\vert \mu_s \left( B_i\cap \cup_{j=1}^{i-1} B_j\right) - \nu_s\left(B_i \cap \cup_{j=1}^{i-1} B_j\right) \right\vert \\
\le&~ \left\vert\mu_s(B_i)-\nu_s(B_i)\right\vert 
+ \sum_{j=1}^{i-1} \left\vert \mu_s \left( B_i\cap B_j\right) - \nu_s\left(B_i \cap B_j\right) \right\vert ~,
\end{align*}
where the last sum equals $0$ in the case $i=1$. 
Recalling the definitions of the $B_i$ from (\ref{covering}), the preceding inequality and Corollary~\ref{cor:one_or_two} imply that for each fixed $i\ge 1$, 
\[
\left\vert \mu_s\left(A_{i}\right)-\nu_s\left(A_{i}\right) \right\vert 
\to 0
\]
in probability as $s\to\infty$. Combining this with (\ref{eq:b}), we obtain 
\[
\limsup_{s\to\infty} \p{d_\rP(\mu_s,\nu_s)>\veps}\le \veps~.\]
Since $\veps>0$ was arbitrary, this completes the proof.
\end{proof}

We are almost ready to prove Proposition~\ref{mainprop2}; before doing so we 
state two easy facts, which each provide bounds on the GHP distance between a measured metric space and an induced (in some sense) subspace. 
The first fact is immediate from the definition of $\dghp$. 
\begin{fact}\label{ghpbound}
Fix a measured metric space $\rX=(X,d,\mu)$ and $Y \subset X$, and let $\mu_Y$ be a Borel measure on $(Y,d_Y)$, where $d_Y=d|_{Y \times Y}$ is the induced metric. Write $\rY=(Y,d_Y,\mu_Y)$. Then $\dghp(\rX,\rY) \le  \max\{d_{\rH}(\rX,\rY),d_{\rP}(\mu,\mu_Y)\}$. 
\end{fact}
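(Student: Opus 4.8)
The plan is simply to unwind the definition of $\dghp$ recorded in \refS{sec:ghpintro}, taking the ambient metric space to be $(X,d)$ itself. Since $Y\subset X$ and $d_Y=d|_{Y\times Y}$ is the induced metric, the inclusion $\iota\colon (Y,d_Y)\hookrightarrow (X,d)$ is an isometric embedding, and the identity $\mathrm{id}\colon (X,d)\to (X,d)$ is trivially one as well. Using $\phi=\mathrm{id}$ and $\phi'=\iota$ as the two isometries in the infimum defining $\dghp(\rX,\rY)$, we have $\phi(X)=X$, $\phi'(Y)=Y$, $\phi_*\mu=\mu$, and $\phi'_*\mu_Y=\mu_Y$, the latter viewed as a Borel measure on $(X,d)$ supported on $Y$. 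Hence
\[
\dghp(\rX,\rY)\le \max\bigl\{ d_\rH(X,Y),\, d_\rP(\mu,\mu_Y) \bigr\},
\]
where both quantities on the right are computed inside $(X,d)$. This is exactly the asserted inequality once we read $d_\rH(\rX,\rY)$ as $d_\rH(X,Y)$ and $d_\rP(\mu,\mu_Y)$ as the Prokhorov distance between the two measures on $(X,d)$.

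There is no genuine obstacle here — no optimization over embeddings is needed, since the single pair $(\mathrm{id},\iota)$ already realizes the bound, which is why the fact is ``immediate from the definition.'' The only point I would flag is the well-definedness of $d_\rP(\mu,\mu_Y)$, i.e.\ that $\mu_Y$ indeed pushes forward to a Borel measure on $(X,d)$; this is automatic whenever $Y$ is a Borel (in particular, closed) subset of $X$, which holds in all applications in this paper, where $X$ is a finite map or a compact subset of the Brownian map.
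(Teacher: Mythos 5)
Your argument is correct and is exactly the one the paper has in mind when it calls the fact ``immediate from the definition of $\dghp$'': take the ambient space $Z$ to be $(X,d)$ itself with $\phi=\mathrm{id}$ and $\phi'=\iota$ the inclusion, so a single admissible pair of embeddings already witnesses the bound. Your side remark about $\mu_Y$ pushing forward to a Borel measure on $X$ is a reasonable sanity check and is indeed unproblematic in all uses in the paper (finite metric spaces and compact subsets).
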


The second fact informally says that in a compact measured metric space, projecting onto an $\veps$-net does not change the space very much (in the GHP sense). 
The proof is left to the reader. 

\begin{fact}\label{fact:proj}
Let $\rX=(X,d,\mu)$ be a compact measured metric space, and let $S\subset X$ be finite so that there exists $\veps>0$ with $X\subset S^\veps$. Let $(X_s:s\in S)$ be measurable subsets of $X$ such that $\bigcup_{s\in S}X_s =X$, that $\mu(X_s \cap X_{s'})=0$ for $s \ne s'$, and that $X_s \subset B(s,\eps;\rX)$ for all $s \in S$. Define a measure $\nu$ on $S$ by $\nu(s) = \mu(X_s)$ for any $s\in S$, and let $\rS=(S,d|_{S\times S},\nu)$. Then
\[
\dghp(\rX,\rS) \le \veps~.
\]
\end{fact}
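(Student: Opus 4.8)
The plan is to realise $\dghp(\rX,\rS)$ using $(X,d)$ itself as the ambient space. Take $Z=X$, let $\phi=\mathrm{id}_X$, and let $\phi'\colon S\hookrightarrow X$ be the inclusion; since $\rS$ carries the restricted metric $d|_{S\times S}$, the map $\phi'$ is an isometric embedding of $\rS$ onto $S\subset X$. With these choices the definition of $\dghp$ gives
\[
\dghp(\rX,\rS)\le \max\bigl\{d_{\rH}(X,S),\, d_{\rP}(\mu,\nu')\bigr\}\, ,
\]
where $\nu'=\sum_{s\in S}\nu(s)\,\delta_s=\sum_{s\in S}\mu(X_s)\,\delta_s$ denotes $\nu$ viewed as a Borel measure on $X$. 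It therefore suffices to bound each term on the right by $\veps$.

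The Hausdorff term is immediate: $S\subset X$ gives $S\subset X^\veps$, and the hypothesis $X\subset S^\veps$ is precisely the reverse inclusion, so $d_{\rH}(X,S)\le\veps$. For the Prokhorov term I would exhibit a coupling of $\mu$ and $\nu'$. Because $S$ is finite, enumerate it as $s_1,\dots,s_k$ and refine the cover $(X_s)_{s\in S}$ into a partition by setting $\tilde X_{s_1}=X_{s_1}$ and $\tilde X_{s_i}=X_{s_i}\setminus\bigcup_{j<i}X_{s_j}$ for $i\ge2$. These sets are measurable, pairwise disjoint, and cover $X$ (since $\bigcup_s X_s=X$); moreover $\tilde X_{s_i}\subset X_{s_i}\subset B(s_i,\veps;\rX)$, and $\mu(\tilde X_{s_i})=\mu(X_{s_i})$ since the discarded part lies in the $\mu$-null set $\bigcup_{j<i}\bigl(X_{s_i}\cap X_{s_j}\bigr)$. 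Now let $\pi$ be the Borel measure on $X\times X$ defined by $\pi=\sum_{s\in S}\bigl(\mathbf{1}_{\tilde X_s}\mu\bigr)\otimes\delta_s$. Its first marginal is $\mu$ (the $\tilde X_s$ partition $X$) and its second marginal is $\sum_{s\in S}\mu(\tilde X_s)\,\delta_s=\sum_{s\in S}\mu(X_s)\,\delta_s=\nu'$, so $\pi$ is a coupling of $\mu$ and $\nu'$.

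Finally, $\pi$ is supported on $\{(x,s):s\in S,\ x\in\tilde X_s\}\subset\{(x,y)\in X\times X: d(x,y)<\veps\}$. Hence for every $A\in\cB(X)$ we get $\mu(A)=\pi(A\times X)=\pi\bigl((A\times X)\cap\{d<\veps\}\bigr)\le\pi(X\times A^\veps)=\nu'(A^\veps)$, and symmetrically $\nu'(A)\le\mu(A^\veps)$; comparing with the definition of the Prokhorov distance yields $d_{\rP}(\mu,\nu')\le\veps$. Combining the two bounds gives $\dghp(\rX,\rS)\le\veps$. There is no substantial obstacle in this argument; the only points needing (routine) care are the verification that the $\tilde X_s$ form a measurable partition of $X$ with $\mu(\tilde X_s)=\mu(X_s)$, and reading the bound $d_{\rP}(\mu,\nu')\le\veps$ directly off a coupling concentrated on pairs at distance strictly less than $\veps$.
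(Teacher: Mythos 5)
The paper explicitly leaves the proof of this Fact to the reader, so there is no in-paper argument to compare against. Your proof is correct and complete: realising $\dghp$ via the identity embedding of $X$, refining $(X_s)_{s\in S}$ to a true measurable partition $(\tilde X_s)$ with $\mu(\tilde X_s)=\mu(X_s)$, and exhibiting the explicit coupling $\pi=\sum_{s\in S}(\mathbf{1}_{\tilde X_s}\mu)\otimes\delta_s$ supported on $\{d<\veps\}$ is exactly the standard way to bound both the Hausdorff and the Prokhorov terms by $\veps$. The only point worth spelling out a touch more carefully is that $\mu$ and $\nu'$ have the same total mass (which holds here since $\nu'(X)=\sum_s\mu(X_s)=\mu(X)$), so that $\pi$ really is a coupling; with that noted, the argument is airtight.
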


\begin{proof}[{\bf Proof of Proposition~\ref{mainprop2}}]
For $r\in\N$, let $\bR_r\in_u \cR_{r,s(r)}$, and let $\bS_r= \rS(\bR_r)$. Write $\bR_r=(R_r,e_r)$, and let $e'$ be the $\prec_{\bR_r}$-minimal oriented edge of $\bS_r$. 
Next, apply the bijection of \refP{decom1} to the map $\bR_r'=(R_r,e')$: this decomposes $\bR_r$ into $\bS_r$ together with a sequence $(\Theta_i: 0\le i\le 2s(r)-4)$ of submaps of $\bR_r$. Let $n_{r,0} = |e(\Theta_0)|\I{|e(\Theta_i)|>1}$, and for $1\le i \le 2s(r)-4$ let $n_{r,i}=1+|e(\Theta_i)|\I{|e(\Theta_i)|>1}$. Then let $\bn_r=(n_{r,0},\ldots,n_{r,2s(r)-4})$ and 
construct the measure $\nu_{\bS_r}^{\bn_r}$ as in (\ref{eq:nu_const}): for $0 \le i \le 2s(r)-4$, $w_i$ is a random endpoint of $e_i$, and 
\[
\nu_{\bS_r}^{\bn_r}(v) = \frac{1}{2r-4}\sum_{\{i: w_i=v\}} n_{r,i}\, .
\]
(The difference of $1$ in the definition of $n_{r,0}$ accounts for the fact that when reconstructing $\bR_r$ from $\bS_r$ and the $\Theta_i$, we identify two copies of the root edge; the fact that $2r-4$ is the correct normalization follows from (\ref{eq:edge_count0}).)

We have $|\bn_r|_1=2r-4\to \infty$ as $r \to \infty$. Furthermore, if $L(\bR_r) \le r^{3/4}$ then $n_{r,i} \le 2r^{3/4}-3$ for all $i$, so 
$|\bn_r|_2/|\bn_r|_1 =O(r^{-1/8}) \to 0$. 
By Proposition~\ref{secondlargest}, we have $\p{L(\bR_r) \le r^{3/4}} \to 1$, so $|\bn_r|_2/|\bn_r|_1 \convp 0$. 

Corollary~\ref{cor:degbias} now implies that 
$d_{\rP}(\mu_{\bS_r},\nu_{\bS_r}^{\bn_r}) \to 0$ as $r \to \infty$, with the measures viewed as living on $\left(\frac{21}{40 r}\right)^{1/4} \bS_r$. 
For Borel measures $\mu,\nu$ on a compact metric space $(X,d)$, we have $\dghp((X,d,\mu),(X,d,\nu)) = d_{\rP}(\mu,\nu)$, 
so 
\begin{equation}\label{eq:sr_conv}
\dghp\left(\mbox{$(\frac{21}{40r})^{1/4}$} \bS_r,\pran{v(\bS_r),\mbox{$(\frac{21}{40r})^{1/4}$}\cdot d_{\bS_r},\nu_{\bS_r}^{\bn_r}}\right) \convp 0\, .
\end{equation}

We now bound the distance from $(\frac{21}{40r})^{1/4}\bR_r$ to the latter space. It is convenient to work with a graph with edge lengths rather than a finite measured metric space. 
More precisely, view each edge $e$ of $\bR_r$ as an isometric copy $I_e$ of the unit interval $[0,1]$, endowed with the rescaled Lebesgue measure 
$(2r-4)^{-1}\cdot \mathrm{Leb}_{I_e}$, and write $\bR'=(R_r',d_r',\mu_r')$ for the resulting measured metric space. 
We then have $\mu_r'(R_r')=(2r-4)^{-1}\cdot \sum_{e \in e(R_r)} \mathrm{Leb}_{I_e}(I_e) = 1$. 

We may naturally identify $v(R_r)$ with the set 
of endpoints of edges in $\bR'$, and this is an isometric embedding in that with this identification we have $d_{R_r} = d_r'|_{v(R_r)}$. Furthermore, the degree-biased measure 
$\mu_{R_r}^B$ may be obtained by projection onto $v(R_r)$: for $v \in v(R_r)$ we have $\mu_{R_r}^B(v) = \mu_r'(B(v,1/2;\bR'))= \mathrm{deg}_{R_r}(v)/(2(2s-4))$. 
By Fact~\ref{fact:proj}, it follows that for any $\eps > 0$, 
\begin{equation}\label{eq:len_to_bias}
\dghp(\veps\bR',(v(R_r),\veps d_{R_r},\mu_{R_r}^B)) \le \veps. 
\end{equation}

The space $\bS_r=(v(S_r),d_{S_r})$ is likewise isometrically embedded within $\bR'$, and we may also obtain the measure $\nu_{\bS_r}^{\bn_r}$ by projection. 
To do so, let $E_i = e(\Theta_i)$ for $1 \le i \le 2s(r)-4$, let $E_0=E(\Theta_0)\setminus \{e'\}$, and for $v \in v(S_r)$ let 
\[
X_{v} = \bigcup_{\{i: w_i=v\}} \bigcup_{e \in E_i} I_e. 
\]
Then $\nu_{\bS_r}^{\bn_r}(v) = \mu_{r}'(X_v)$. Furthermore, $(X_v:v \in v(S_r))$ covers $R_r'$ and  $\mu_r'(X_u \cap X_v)=0$ for $u \ne v$ since edges only intersect at their endpoints. 
Recalling the definition of $D(\bR_r)$ from Section~\ref{sec:sketch}, for any $v \in V$ we have $X_v \subset B(v,D(\bR_r);\bR')$. 
It follows from Fact~\ref{fact:proj} that for all $\veps > 0$, 
\begin{equation}\label{eq:len_to_pend}
\dghp(\veps\bR',(v(S_r),\veps d_{S_r},\nu_{\bS_r}^{\bn_r})) \le \veps \cdot D(\bR_r)\, .
\end{equation}
We always have $D(\bR_r) \ge 1$, so combining (\ref{eq:len_to_bias}), (\ref{eq:len_to_pend}) gives 
\[
\dghp((v(S_r),\veps d_{S_r},\nu_{\bS_r}^{\bn_r}),(v(R_r),\veps d_{R_r},\mu_{R_r}^B)) \le 2\veps \cdot D(\bR_r)
\]
Using Lemma~\ref{lem:unifmeas} to bound  $\dghp((v(R_r),\veps d_{R_r},\mu_{R_r}^B),\veps \bR_r)$, the triangle inequality then gives 
\begin{equation}\label{eq:ghpbound}
\dghp((v(S_r),\veps d_{S_r},\nu_{\bS_r}^{\bn_r}),\veps \bR_r) \le 2\veps \cdot D(\bR_r)+\max\left\{\veps,1/r\right\}~ .
\end{equation}
By Proposition~\ref{diam}, $r^{-1/4}D(\bR_r) \convp 0$, and (\ref{eq:sr_conv}) then implies that 
\[
\dghp\left(\mbox{$(\frac{21}{40 r})^{1/4}$} \bS_r, \mbox{$(\frac{21}{40 r})^{1/4}$}\bR_r\right) \convp 0\, .
\]
Since $\frac{3}{8s(r)} \cdot \bS_r \convdist \bM$ as $r \to \infty$, and $\frac{3}{8s(r)} = (1+o(1)) \frac{21}{40 r}$, the result follows. 
\end{proof}

\section{Proofs of the theorems}\label{sec:thmpf}

Recall that $\K$ is the set of measured isometry classes of compact metric spaces, and that GHP convergence refers to convergence in the the Polish space $(\K,\dghp)$. 

\begin{proof}[{\bf Proof of \refT{thm3}}]
Let $g: \K\to\R$ be a bounded continuous function, and write $\|g\| =\sup |g| < \infty$. 
Recall that $\bR_r \in_u \cR_r$, and let $\bM_r = \left(\frac{21}{40r}\right)^{1/4} \bR_r$. We show that 
\begin{equation}\label{eq:conv_bcf}
\E{g(\bM_r)} \to \E{g(\bM)}
\end{equation}
as $r \to \infty$; the result then follows by the Portmanteau theorem. 

The proof of (\ref{eq:conv_bcf}) is simply summarized: average over the size of $\sb(\bR_r)$. The details are also fairly straightforward.
Fix $\veps \in (0,1/2)$ with $\veps < 1/\|g\|$, let $\cA$ be the Airy density and let $\beta$ given by \refP{airyprop2}. Then fix $C_\veps>0$ large enough that
$\int_{-C_\veps}^{C_\veps}\beta \cA\left(\beta x \right) dx > 1-\veps$. Recall from the introduction $s(r)$ satisfies $|s(r)-5r/7| \le C r^{2/3}$ for large $r$. The constant $C$ was fixed but arbitrary, so we may assume that $C > C_{\eps}$.

Next, for $r,s\in\N$ with $s \le r$,  let $\bR_{r,s}\in_u \cR_{r,s}$ and write $\bM_{r,s} = \left(\frac{21}{40r}\right)^{1/4} \bR_{r,s}$. 
We claim that 
\begin{equation}\label{eq:conv_unif}
\sup_{\{s \in \N: |s-5r/7| \le C_{\veps} r^{2/3}\}} |g(\bM_{r,s})-g(\bM)| \to 0\, 
\end{equation}
as $r \to \infty$. Indeed: otherwise we may find a sequence $(\hat{s}(r):r \in\N)$ such that $|\hat{s}(r)-5r/7| \le C_{\veps} r^{2/3} < C r^{2/3}$ with $\limsup_{r \to \infty} |g(\bM_{r,\hat{s}(r)})-g(\bM)| \ne 0$. 
By the Portmanteau theorem, this implies that $\bM_{r,s}$ does not converge in distribution to $\bM$, contradicting \refP{mainprop2}. 
This establishes (\ref{eq:conv_unif}). 

Now for each $r\in\N$, let 
\[
E_r = \left\{ |\sb(\bR_r) - 5r/7| \le C_\veps r^{2/3}\right\}~.
\]
Recalling the definition of $\delta_s(\cdot)$ from \refT{thm:blocksizes}, it follows from \refP{airyprop2} and a Riemann approximation that for large enough $r$,
\begin{align*}
\p{E_r}
=&~ (1+o(1))\sum_{\{s \in \N: |s-5r/7| \le C_{\veps} r^{2/3}\}} \frac{
\beta \cA\left(\beta \delta_s(q)\right)}{r^{2/3}} \\
=&~ (1+o(1))\int_{-C_\veps}^{C_\veps}~ \beta\cA\left(\beta s\right) ~ds\\
 >&~ 1-2\veps. 
\end{align*}

Then for large enough $r$,
\begin{equation}\label{3bd1}
\left\vert \E{g(\bM_r)} - \E{g(\bM_r~\I{E_r})} \right\vert \le \p{E_r^c} \|g\|< 2\veps \|g\|~.
\end{equation}
We now show that $\left\vert \E{g(\bM_r~\I{E_r})} - \E{g(\bM)} \right\vert$ is also small. 
The conditional law of $\bR_r$ given that $\sb(\bR_r)=s$ is identical to that of $\bR_{r,s}$, so 
\[
\E{g(\bM_r~\I{E_r})} = \sum_{\{s \in \N: |s-5r/7| \le C_{\veps} r^{2/3}\}} \p{\sb(\bR_r)=s} \E{g(\bM_{r,s})}~.
\]
By the triangle inequality, we therefore have
\begin{align}
&\left\vert \E{g(\bM_r~\I{E_r})} - \E{g(\bM)} \right\vert \notag\\
\le&~
 \sum_{\{s \in \N: |s-5r/7| \le C_{\veps} r^{2/3}\}} \p{\sb(\bR_r)=s}\cdot  \left\vert \E{g(\bM_{r,s})} - \E{g(\bM)} \right\vert\notag\\
 \le & \sup_{\{s \in \N: |s-5r/7| \le C_{\veps} r^{2/3}\}} |g(\bM_{r,s})-g(\bM)|\, . 
\end{align}
This tends to $0$ by (\ref{eq:conv_unif}), 
which with (\ref{3bd1}) gives $\limsup_{r \to \infty} |\E{g(\bM_r)} - \E{g(\bM)}| \le 2\veps \|g\|$. 
Since $\veps > 0$ was arbitrary, this establishes (\ref{eq:conv_bcf}) and completes the proof. 
  \end{proof}
  
  In the remaining proofs, we use the following simple fact. Recall the definition of $L'$ from (\ref{Lprime}).
\begin{fact}\label{blockfact}
Let $\rQ_q\in\cQ_q$, write $\rR_q = \rR(\rQ_q)$ and $\rS_q = \rS(\rQ_q)$. Note that if $\rS_q \neq \rS(\rR_q)$, then $\sb(\rQ_q) \ge \sb(\rR_q)$, and it follows that $\rQ_q$ contains at least two $2$-connected blocks of size at least $\sb(\rR_q)$, implying that $L'(\rQ_q)\ge \sb(\rR_q)$. 
\end{fact}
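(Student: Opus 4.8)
The plan is to unpack, one at a time, the three implications asserted in the statement.

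\emph{Step 1: the bound $\sb(\rQ_q)\ge\sb(\rR_q)$.} This in fact requires no hypothesis on $\rS_q$. Since $\rR_q=\rR(\rQ_q)$ is an induced $2$-connected block of $\rQ_q$, every nearly simple induced submap of $\rR_q$ is also a nearly simple induced submap of $\rQ_q$, so every simple block of $\rR_q$ is a simple block of $\rQ_q$; applying this to $\rS(\rR_q)$ gives $\sb(\rQ_q)\ge|v(\rS(\rR_q))|=\sb(\rR_q)$. The hypothesis $\rS_q\ne\rS(\rR_q)$ will enter only in order to produce a \emph{second} large $2$-connected block.

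\emph{Step 2: $\rS_q$ is not contained in $\rR_q$.} I would argue this by contradiction. If every edge of $\rS_q$ were an edge of $\rR_q$, then $\rS_q$ would be a simple block of $\rR_q$, and by Step 1 it would satisfy $|v(\rS_q)|=\sb(\rQ_q)\ge\sb(\rR_q)$, hence be a largest simple block of $\rR_q$. One then checks that, after rerooting, the canonical order $\prec_{\rQ_q}$ and the tie-breaking rule select the same largest simple block and the same root edge inside $\rR_q$ as $\prec_{\rR_q}$ does, which forces $\rS_q=\rS(\rR_q)$ and contradicts the hypothesis. Therefore $\rS_q$ uses some edge $e$ of $\rQ_q$ with $e\notin e(\rR_q)$, and in particular a vertex outside $v(\rR_q)$.

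\emph{Step 3: locating a large pendant submap.} Reroot $\rQ_q$ at the $\prec_{\rQ_q}$-minimal edge of $\rR_q$, so that $\rR_q$ becomes the $2$-connected root block, and apply the decomposition of \refP{decom2}: $\rQ_q$ is obtained from $\rR_q$ by gluing on the pendant submaps $\rM_{i,j}$, and each $\rM_{i,j}$ meets the rest of $\rQ_q$ only in the two endpoints of a single edge $e_i\in e(\rR_q)$, while distinct pendant submaps are vertex-disjoint off $\rR_q$. The edge $e$ from Step 2 lies in some $\rM_{i,j}$. Since any two distinct $2$-connected blocks of $\rQ_q$ meet in at most one edge, the $2$-connected block $B$ of $\rQ_q$ containing $e$ cannot extend out of $\rM_{i,j}$ (doing so would create a cut vertex inside a $2$-connected block), so $B$ is a subgraph of $\rM_{i,j}$; also $B\ne\rR_q$ since $e\notin e(\rR_q)$. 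The same confinement applies to $\rS_q$ as a whole --- it is a union of $2$-connected pieces glued at cut vertices, one of which contains $e$ --- so $\rS_q$ is a subgraph of $\rM_{i,j}$, whence $|v(\rM_{i,j})|\ge|v(\rS_q)|=\sb(\rQ_q)\ge\sb(\rR_q)$. Recalling the definition of $L'$ in (\ref{Lprime}), this gives $L'(\rQ_q)\ge\sb(\rR_q)$, and along the way $\rR_q$ together with $B$ furnish two $2$-connected blocks of $\rQ_q$ of size at least $\sb(\rR_q)$.

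I expect the main obstacle to be the bookkeeping with the block definitions of \refS{sec:mapandquad}: in Step 2, verifying that a largest simple block of $\rQ_q$ that lies inside $\rR_q$ must equal $\rS(\rR_q)$ (the compatibility of the breadth-first orders and of the tie-breaking conventions after rerooting); and in Step 3, confirming that a $2$-connected block, and then a whole simple block, that meets a pendant submap of the \refP{decom2} decomposition is confined to that submap, with proper attention to the collapsing of nearly-facial $2$-cycles. These verifications are routine from the definitions but should be written out carefully.
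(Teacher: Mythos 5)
Your proposal is correct and unpacks exactly the chain of implications that the paper compresses into the statement of the Fact (which is given without a separate proof): Step~1 and Step~3 are precisely the reasoning behind ``$\sb(\rQ_q)\ge\sb(\rR_q)$, hence two large $2$-connected blocks, hence $L'(\rQ_q)\ge\sb(\rR_q)$''. Your Step~2 isolates the one point the paper leaves implicit, namely that if $\rS_q$ lay inside $\rR_q$ the tie-breaking conventions would force $\rS_q=\rS(\rR_q)$; you are right to flag the compatibility of $\prec_{\rQ_q}$ and $\prec_{\rR_q}$ as the detail that would need a careful write-up, though the paper also treats it as routine.
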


\begin{proof}[{\bf Proof of \refT{thm:blocksizes}}]
Fix $q\in\N$ and write $\bR_q = \rR(\bQ_q)$. Let $\bR\in_u\cR_{r(q)}$. Given that $\b(\bQ_q)=r(q)$, $\bR_q\eqdist\bR$, so
\begin{align*}
&\p{\b(\bQ_q)=r(q),\sb(\bR_q)=s(r(q))} \\
=&~ \p{\sb(\bR_q) = s(r(q))~\big\vert~\b(\bQ_q) = r(q)} \cdot\p{\b(\bQ_q)= r(q)}\\
=&~\p{\sb(\bR) = s(r(q))} \cdot\p{\b(\bQ_q)= r(q)}~.
\end{align*}
Writing $\beta=\frac{5^{2/3}\cdot 15}{28}$ and $\beta'=\frac{7^{2/3}}{6^{1/3}\cdot 2}$, by Propositions \ref{airyprop1} and \ref{airyprop2}, we thus have
\begin{equation}\label{condeq1}
\p{\b(\bQ_q)=r(q),\sb(\bR_q)=s(r(q))} =\frac {\beta\cA\left(\beta \delta_r(q)\right)}{q^{2/3}} \frac {\beta' \cA\left(\beta' \delta_s(q)\right)}{r(q)^{2/3}} (1+o(1)) ~.
\end{equation}

Next, 
\begin{align}
&\left\vert \p{\b(\bQ_q)=r(q),\sb(\bR_q)=s(r(q))} - \p{\b(\bQ_q)=r(q),\sb(\bQ_q)=s(r(q))}\right\vert\notag\\
\le&~\p{\b(\bQ_q)=r(q),\sb(\bQ_q)\neq s(r(q)),\sb(\bR_q)=s(r(q))}\nonumber\\
&\instep +\p{\b(\bQ_q)=r(q),\sb(\bQ_q)= s(r(q)),\sb(\bR_q)\neq s(r(q))}~,\label{diff_bound}
\end{align}
If $\{\b(\bQ_q)=r(q),\sb(\bQ_q)\neq s(r(q)),\sb(\bR_q)=s(r(q))\}$ occurs then $L'(\bQ_q) \ge s(r(q))$, as explained in Fact~\ref{blockfact}. 
Similarly, if $\{\b(\bQ_q)=r(q),\sb(\bQ_q)= s(r(q)),\sb(\bR_q)\neq s(r(q))\}$ occurs then $\bQ_q$ must contain a simple block of size $s(r(q))$ that does not lie within $\bR_q$; since 
$\b(\bQ_q)=r(q) > s(r(q))$, in this case we also obtain $L'(\bQ_q) \ge s(r(q))$. It follows from \refP{secondlargest2} that there exist positive constants $c_1,c_2,c_3$ such that 
\begin{align*}
& |\p{\b(\bQ_q)=r(q),\sb(\bR_q)=s(r(q))} - \p{\b(\bQ_q)=r(q),\sb(\bQ_q)=s(r(q))}| \\
 \le &~ 2 \p{L'(\bQ_q) \ge s(r(q))} \\
 \le& ~ c_1 \exp\left(-c_2 q^{c_3}\right) \\
=& ~ o\left(q^{-2}\right)~,
\end{align*}
which combined with (\ref{condeq1}) proves the theorem. 
\end{proof}

For the proof of \refT{thm2}, we require a lemma bounding the maximum degree in a quadrangulation uniformly drawn from $\cQ_{q,r(q),s(q)}$; the lemma follows easily from the fact that degrees in uniform quadrangulations have exponential tails. 
\begin{lem}\label{lem:deg}
Let $\bQ_{q}\in_u\cQ_{q,r(q),s(q)}$. Then for all $q$ sufficiently large, 
\[
\p{\max(\deg_{\bQ_q}(w):w \in v(\bQ_q)) \ge (\ln q)^2}<~ q^{-10}~.
\]
\end{lem}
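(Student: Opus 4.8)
The plan is to bound the \emph{unconditional} maximum-degree tail for a uniform element of $\cQ_q$ and then pay only a polynomial price to condition on $\{\b=r(q),\,\sb=s(r(q))\}$. Concretely, let $\bQ_q'\in_u\cQ_q$; since $\cQ_{q,r(q),s(q)}\subseteq\cQ_q$, the conditional law of $\bQ_q'$ given $\{\b(\bQ_q')=r(q),\,\sb(\bQ_q')=s(r(q))\}$ is exactly that of $\bQ_q$, so that
\[
\p{\max_{w\in v(\bQ_q)}\deg_{\bQ_q}(w)\ge(\ln q)^2}\le\frac{\p{\max_{w\in v(\bQ_q')}\deg_{\bQ_q'}(w)\ge(\ln q)^2}}{\p{\b(\bQ_q')=r(q),\,\sb(\bQ_q')=s(r(q))}}~.
\]
For the denominator I would apply \refT{thm:blocksizes}: by the standing hypotheses on $r$ and $s$ we have $|\delta_r(q)|<C$ and $|\delta_s(q)|<C$ for all large $q$, and since the Airy density $\cA$ is continuous and strictly positive on $\R$ (see \cite{BFSS}), the factors $\cA(\beta\delta_s(q))$ and $\cA(\beta'\delta_r(q))$ are bounded below by a positive constant over this compact range of arguments; together with $r(q)\le q$ this yields $\p{\b(\bQ_q')=r(q),\,\sb(\bQ_q')=s(r(q))}\ge c_0\,q^{-4/3}$ for some $c_0>0$ and all $q$ large.

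For the numerator I would invoke the standard fact that vertex degrees in uniform quadrangulations have uniformly exponential upper tails: there are $a,\lambda>0$ such that for every $n$ and every $t\ge0$, a uniformly random vertex $v$ of $\bQ_n'\in_u\cQ_n$ satisfies $\p{\deg_{\bQ_n'}(v)\ge t}\le a\,e^{-\lambda t}$. (This follows readily from the Cori--Vauquelin--Schaeffer bijection, under which the degree of a vertex of the quadrangulation is a count of tree-corners admitting it as a successor, a quantity with geometric-type tails; see, e.g., \cite{CS}.) A union bound over the $q$ vertices --- equivalently, Markov's inequality applied to $\#\{w\in v(\bQ_q'):\deg_{\bQ_q'}(w)\ge t\}$, whose expectation equals $q\cdot\p{\deg_{\bQ_q'}(v)\ge t}$ --- then gives $\p{\max_{w}\deg_{\bQ_q'}(w)\ge t}\le a\,q\,e^{-\lambda t}$.

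Combining the two estimates with $t=(\ln q)^2$ yields
\[
\p{\max_{w\in v(\bQ_q)}\deg_{\bQ_q}(w)\ge(\ln q)^2}\le\frac{a\,q\,e^{-\lambda(\ln q)^2}}{c_0\,q^{-4/3}}=\frac{a}{c_0}\,q^{\,7/3-\lambda\ln q}~,
\]
and since $q^{\,7/3-\lambda\ln q}=o(q^{-K})$ for every fixed $K$, the right-hand side drops below $q^{-10}$ once $q$ is large.

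The only non-routine ingredient here is the exponential degree-tail input for uniform quadrangulations; everything else is a union bound plus the substitution of \refT{thm:blocksizes}. I expect that pinning down the cleanest citation for that tail bound (or, failing that, spelling out the short bijective derivation) will be the main thing to settle, though it is entirely standard. Note also that the argument has a great deal of slack: any bound of the form $\p{\max_{w}\deg_{\bQ_n'}(w)\ge t}\le\mathrm{poly}(n)\,e^{-\lambda t}$ would suffice, because $(\ln q)^2$ already beats every power of $q$ in the exponent.
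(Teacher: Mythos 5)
Your proof is correct and follows essentially the same route as the paper's: condition on $\cQ_{q,r(q),s(q)}$ and pay the $\Theta(q^{4/3})$ price via Theorem~\ref{thm:blocksizes}, then combine with a uniform exponential degree-tail bound for $\cQ_q$ and a union bound over vertices. The only difference is the source of the degree tail: the paper cites Bender and Canfield \cite{BC} (transferred to quadrangulations by Tutte's angular bijection) rather than a CVS-based derivation, but the bound used is the same.
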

\begin{proof}
By \citep[Theorem 2 (i)]{BC} (and Tutte's angular bijection between maps and quadrangulations), 
for all $\eps > 0$ there exists $B > 0$ such that for all $q\ge 3$, if $\bQ \in_u \cQ_q$ and $u \in_u v(\bQ)$ then 
\begin{equation}\label{in:degbd}
\p{\deg_{\bQ}(v) > d}< B\pran{\frac{1}{2}+\veps}^d~.
\end{equation}
Given that $\b(\bQ)=r(q)$ and $\sb(\rR(\bQ))=s(q)$, the conditional law of $\bQ$ is uniform on $\bQ_{q,r(q),s(q)}$; so 
\begin{align*}
	& \p{\max(\deg_{\bQ_q}(w):w \in v(\bQ_q)) > d} 	\\
 = 	&~ \Cprob{\max(\deg_{\bQ}(w):w \in v(\bQ)) > d}{\b(\bQ)=r(q),\sb(\rR(\bQ))=s(q)} \\
\le 	&~ q\cdot \Cprob{\deg_{\bQ}(u) > d}{\b(\bQ)=r(q),\sb(\rR(\bQ))=s(q)} \\
\le 	& ~q \cdot \frac{\p{\deg_{\bQ}(u) > d}}{\p{\b(\bQ)=r(q),\sb(\rR(\bQ))=s(q)}} \\
= 	&~ O(q^{7/3})\p{\deg_{\bQ}(u) > d}\, ,
\end{align*}
the final inequality by Theorem~\ref{thm:blocksizes} and the definition of $r(q)$ and $s(q)$. Taking $d=\ln^2 q$ and $\veps<1/2$, the result then follows from (\ref{in:degbd}). 
\end{proof}

\begin{proof}[{\bf Proof of \refT{thm2}}]
Recall that $\bQ_q \in_u \cQ_{q,r(q),s(q)}$, and $\bR_q =R(\bQ_q)$ and $\bS_q=S(\bQ_q)$. Let $\bQ\in_u\cQ_q$.
Given that $\b(\bQ) = r(q)$ and $\sb(\bQ) = s(q)$, we have $\bQ_q \eqdist \bQ$. By \refFt{blockfact}, we then have 
\begin{align*}
\p{\bS_q \neq \rS(\bR_q)}
=&~\p{\rS(\bQ) \neq \rS(\rR(\bQ))\big\vert \b(\bQ)=r(q), \sb(\bQ)=s(q)}\\
\le&~\p{L'(\bQ)\ge s(q) \big\vert \b(\bQ)=r(q),\sb(\bQ)=s(q)}\\
\le&~ \frac{\p{L'(\bQ)\ge s(q)\big\vert \b(\bQ)=r(q)}}{\p{\b(\bQ)=r(q),\sb(\bQ)=s(q)}}~.
\end{align*}
Combined with \refT{thm:blocksizes}, this gives 
\[
\p{\bS_q\neq \rS(\bR_q)}
=O\left(q^{4/3}\right)\cdot \p{L'(\bQ)\ge s(q)\big\vert \b(\bQ)=r(q)}~.
\]

Since $s(q)=q/3+O\left(q^{2/3}\right)$, by \refP{secondlargest2} there exist $c_2,c_3>0$ such that
\[
\p{L'(\bQ)\ge s(q)\big\vert\b(\bQ)=r(q)} = O\left(\exp\left(-c_2 q^{c_3}\right)\right)~.
\]
Hence,
\begin{equation}\label{eq:qr_sub}
\p{\bS_q \neq \rS(\bR_q)}=O\left(q^{4/3}\cdot \exp\left(-c_2 q^{c_3}\right)\right)~.
\end{equation}
Now let $\bR \in_u \cR_{r(q),s(q)}$. Given that $\bS_q = \rS(\bR_q)$, we have $\bR_q \in_u \cR_{r(q),s(q)}$, so (\ref{eq:qr_sub}) implies easily that for any bounded continuous function $g:\mathbb{K}^2 \to \R$
\begin{align*}
	 \left| \E{g\pran{\mbox{$\left(\frac{21}{40r(q)}\right)^{1/4}$} \bR_q,\mbox{$\left(\frac{21}{40r(q)}\right)^{1/4}$}\bS_q}} -\E{g\pran{\mbox{$\left(\frac{21}{40r(q)}\right)^{1/4}$} \bR,\mbox{$\left(\frac{21}{40r(q)}\right)^{1/4}$}\rS(\bR)}}\right| 
	& \to 0\ ,
\end{align*}
as $q \to \infty$. 
By \refP{mainprop2} and the Portmanteau theorem, it follows that as $r(q)\to\infty$,
\begin{equation}\label{convresize}
\left( \left(\frac{21}{40r(q)}\right)^{1/4} \bR_{q},\left(\frac{21}{40r(q)}\right)^{1/4}  \bS_{q}\right) \convdist (\bM,\bM)~.
\end{equation}
Moreover, by the definition of $r(q)$, there exist $C_1,C_2>0$ such that for all $q>0$, 
\[
 \frac{9}{8 q + C_1 q^{2/3}}\le \frac{21}{40 r(q)} \le  \frac{9}{8 q - C_2 q^{2/3}}~.
\]
From this and (\ref{convresize}) we obtain 
\begin{equation}\label{conv2}
\left( \left(\frac{9}{8q}\right)^{1/4} \bR_{q},\left(\frac{9}{8q}\right)^{1/4}  \bS_{q}\right) \convdist (\bM,\bM)
\end{equation}
as $q\to\infty$. To finish the proof, we show that also
\begin{equation}\label{eq:toshow}
\left( \left(\frac{9}{8q}\right)^{1/4} \bQ_{q},\left(\frac{9}{8q}\right)^{1/4}  \bR_{q}\right) \convdist (\bM,\bM). 
\end{equation}
Joint convergence of the triple to the limit $(\bM,\bM,\bM)$ is immediate from (\ref{conv2}) and (\ref{eq:toshow}), so it remains to prove (\ref{eq:toshow}). (Note that we may not simply invoke the result of Le Gall \cite{LG} and of Miermont \cite{Mi} to conclude that the $\left(\frac{9}{8q}\right)^{1/4}\bQ_q \convdist \bM$ since $\bQ_q$ is not uniformly distributed over $\cQ_q$, but over $\cQ_{q,r(q),s(q)}$.) The argument is similar to that in \refP{mainprop2}, and we focus on explaining the points where it differs.

Let $e'$ be the $\prec_{\bQ_q}$-minimal oriented edge of $\bR_q$; by definition, this is the root edge of $\bR_q$. 
Write $\bQ_q=(Q_q,e_q)$ and $\bR_q=(R_q,e')$. 
Also, let $\bQ_q'=(Q_q,e')$. The bijection $\psi$ from \refP{decom2} gives a decomposition of $\bQ_q'$ as
\[
\left(R_q, ((L_i,b_i):0\le i\le 2r(q)-4)\right)~,
\]
 where the $L_i = (\rM_{i,j}: 1\le j\le \ell_i)\in \cQ^{\ell_i}$ satisfy (recalling (\ref{eq:edge_count})) 
 \begin{equation}\label{eq:correct_sum}
|e(Q_q)| = |e(R_q)|+\sum_{i=0}^{|e(R_q)|} \sum_{j=1}^{\ell_i} (|e(\rM_{i,j})|+1+\I{|e(\rM_{i,j})|=1})\, ,
 \end{equation}
and  $b_i=(b_{i,j}:1\le j\le \ell_i)\in \{0,1\}^{\ell_i}$. 
 
List the elements of $e(R_q)$ as $(e_i:1 \le i \le |e(R_q)|)$ according to the order $\prec_{\bR_q}$; like in  Section~\ref{sec:mapandquad}, we view $e_i$ as oriented (we oriented so that the tail $e_i^-$ precedes the head $e_i^+$ according to the breadth-first order described in the introduction, but this is unimportant; all that matters is to have a fixed rule for choosing the orientation). Also, let $e_0$ be a copy of $e'$. Under the bijection $\psi$, for each $0 \le i \le |e(R_q)|$ and $1 \le j \le \ell_i$, the value $b_{i,j}$ indicates the endpoint $e_i$ at which $\rM_{i,j}$ is attached.

Recall that $\mu^B=\mu_{\bQ_q}^B$ is the degree-biased measure on $v(\bQ_q)$, We now compare $\mu^B$ with a random projection of $\mu^B$ onto $\bR_q$. First define a vector $\bn_q$ as follows. Let $n_0=0$ if $\ell_0=0$ and otherwise let $n_0 = \sum_{j=1}^{\ell_0} (|e(M_{0,j})|+1+\I{|e(M_{0,j}| \ne 1})$, and for $1\le i \le 2r(q)-4$ let 
\begin{equation}\label{eq:nvdef}
n_i = 1+\sum_{j=1}^{\ell_i} (|e(M_{i,j})|+1+\I{|e(M_{i,j})| \ne 1})
\end{equation}
Set $\bn_q=(n_i:0 \le i \le 2r(q)-4)$; it is immediate from \refP{decom2} that $\bn_q$ is exchangeable. 
Now define the measure $\nu^{\bn_q}=\nu_{\bR_q}^{\bn_q}$ as in (\ref{eq:nu_const}): more precisely, for each edge $e_i\in e(\bR_q)$ choose a uniformly random endpoint $w_i$ of $e_i$. Then $\nu^{\bn_q}$ is specified by letting 
\[
\nu^{\bn_q}(V) = \frac{1}{2q-4} \sum_{\{i: w_i\in V\}} n_i\, 
\]
for $V \subset v(\bR_q)$. (The fact that $2q-4$ is the correct normalizing constant follows from (\ref{eq:correct_sum}).)

If $\max(n_i:0 \le i \le 2r(q)-4)/(2q-4) \convp 0$ then $|\bn_q|_2/|\bn_q|_1 \convp 0$ and the same argument which led to Corollary~\ref{cor:degbias} gives $d_{\mathrm{P}}(\mu_{\bR_q},\nu^{\bn_q}) \convp 0$. Assuming this holds then just as in (\ref{eq:sr_conv}) we obtain
\begin{equation}\label{rq_conv}
\dghp\left(\mbox{$(\frac{9}{8q})^{1/4}$} \bR_q,\pran{v(\bR_q),\mbox{$(\frac{9}{8q})^{1/4}$}\cdot d_{\bR_q},\nu^{\bn_q}}\right) \convp 0\, .
\end{equation}
Recall the definition of $D'$ from (\ref{Dprime}). Reprising the argument for (\ref{eq:ghpbound}) now gives that for $\veps > 0$, 
\begin{equation}\label{eq:diam_bd}
\dghp((v(\bR_q),\veps d_{\bR_q},\nu^{\bn_q}),\veps \bQ_q) \le 2\veps \cdot (D'(\bQ_q)+1)+\max(\veps,1/q)~ .
\end{equation}
This has a very slightly different form from (\ref{eq:ghpbound}), where the bound was $2\veps D(\bR_r) + \max(\veps,1/r)$. 
The reason for the difference is that in the current setting, the submaps of $\bQ_q$ pendant to $\bR_q$ only attach to one end of an edge of $\bR_q$. When we project the mass to form $\nu^{\bn_q}$ we may choose the ``wrong end''. This source of error did not appear when projecting mass onto the largest simple block because the $2$-connected ``decorations'' of the largest simple block are attached at {\em both} endpoints of their respective edges. 

At any rate, by Proposition~\ref{diam2}, $D'(\bQ_q)/q^{1/4} \convp 0$, so (\ref{eq:diam_bd}) and (\ref{rq_conv}) together give  $\dghp((\frac{9}{8q})^{1/4} \bQ_q,(\frac{9}{8q})^{1/4} \bR_q) \convp0$. But by (\ref{conv2}) we know that the second argument converges to $\bM$, and (\ref{eq:toshow}) follows. 

It thus remains to prove that $\max\{n_i:0 \le i \le 2r(q)-4\}/(2q-4) \convp 0$. But this is easy: $\ell_i$ is the number of copies of a particular edge in $\bQ_q$, so $\max_{0 \le i \le 2q-4} \ell_i$ is at most $\max(\deg_{\bQ_q}(w):w \in v(\bQ_q))$. By (\ref{eq:nvdef}) we then have 
\[
\max\{n_i:0 \le i \le 2r(q)-4\} \le 1 + \max\{\deg_{\bQ_q}(w):w \in v(\bQ_q))\cdot (2+\max_{i,j} |e(M_{i,j})|\}. 
\]
By Lemma~\ref{lem:deg}, the largest degree is at most $\ln^2 q$ with high probability, and Proposition~\ref{secondlargest2} gives that $q^{-3/4}\cdot \max_{i,j} |e(M_{i,j})| \le q^{-3/4}\cdot (2L'(\bQ_q)-4) \convp 0$. The result follows. 
\end{proof}

\begin{proof}[{\bf Proof of \refT{thm1}}]
The theorem follows from Theorem~\ref{thm2} in exactly the same way as Theorem~\ref{thm3} followed from Proposition~\ref{mainprop2}, using Theorem~\ref{thm:blocksizes} in place of Proposition~\ref{airyprop2} for the averaging argument. 
\end{proof}

\appendix

\section{The remaining derivation for compositional schemata}\label{app:derivation}

In this section we explain how Propositions~\ref{schema1} and ~\ref{schema2} are derived. Though this consists in a rather classical application of analytic combinatorics machinery, we have included a reasonably detailed discussion, as we believe this may be useful for readers whose expertise is primarily probabilistic. 

We first establish a system of parameterization for $M(z)$, which is the key to showing that $M(z)$ is singular with exponent $3/2$ and to extracting the coefficients of $M(z),C(z)$ and $H(z)$.

\begin{lem}\label{lagrangean}
Let $\psi_M(t) = \frac{t(2 - 9 t) }{(1 - 3 t)^2}$, let $\phi_M(t)=\frac 1 {1-3t}$, and let $L_M(z)$ be defined by the implicit relation $L_M(z) = z \phi_M(L_M(z))$, then
\[
M(z) = \psi_M(L_M(z))~.
\]
\end{lem}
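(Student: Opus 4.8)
The plan is to verify the claimed identity coefficient by coefficient, combining Tutte's classical enumeration of rooted quadrangulations with the Lagrange inversion formula. First I would recall that a rooted quadrangulation with $n+2$ vertices has exactly $n$ faces (Euler's relation, together with $4F=2E$, forces $E=2F$ and hence $V=F+2$), and that Tutte's census gives $|\cQ_{n+2}| = \frac{2\cdot 3^n}{(n+1)(n+2)}\binom{2n}{n}$ for $n\ge 1$ (see \cite{T}); with the convention $[z^0]M(z)=0$ this pins down $M(z)$ completely. Since $\phi_M(0)=1\ne 0$, the implicit equation $L_M(z)=z\,\phi_M(L_M(z))$ has a unique formal power series solution with $L_M(0)=0$, so the Lagrange inversion formula applies: for every formal power series $\Psi$ and every $n\ge 1$,
\[
[z^n]\Psi(L_M(z)) = \frac1n\,[t^{n-1}]\bigl(\Psi'(t)\,\phi_M(t)^n\bigr).
\]

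Next I would carry out the extraction with $\Psi=\psi_M$. A one-line computation gives $\psi_M'(t) = \dfrac{2-12t}{(1-3t)^3}$, the numerator coming from $(2-18t)(1-3t)+6(2t-9t^2)=2-12t$ after cancelling a common factor $(1-3t)$. Hence, using $[t^k](1-3t)^{-(n+3)}=3^k\binom{n+2+k}{k}$, for every $n\ge 1$,
\[
[z^n]\psi_M(L_M(z)) = \frac1n\,[t^{n-1}]\frac{2-12t}{(1-3t)^{n+3}} = \frac1n\Bigl(2\cdot 3^{n-1}\binom{2n+1}{n-1} - 12\cdot 3^{n-2}\binom{2n}{n-2}\Bigr).
\]
It then remains to check that this equals $|\cQ_{n+2}|$. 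This is the only slightly fiddly step, but it is purely mechanical: by Pascal's rule $\binom{2n+1}{n-1}=\binom{2n}{n-1}+\binom{2n}{n-2}$, together with $\binom{2n}{n-1}=\frac{n}{n+1}\binom{2n}{n}$ and $\binom{2n}{n-2}=\frac{n(n-1)}{(n+1)(n+2)}\binom{2n}{n}$, the bracket collapses to $\frac{3n}{(n+1)(n+2)}\binom{2n}{n}$, whence the whole expression becomes $\frac{2\cdot 3^n}{(n+1)(n+2)}\binom{2n}{n}=|\cQ_{n+2}|$. The case $n=0$ is immediate since $\psi_M(L_M(0))=\psi_M(0)=0$.

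I do not anticipate a genuine obstacle here: the argument is entirely the Lagrange inversion verification, and the only places demanding care are matching Tutte's closed form in the last step and keeping the face/vertex index shift $[z^n]M(z)=|\cQ_{n+2}|$ straight throughout. For what it is worth, an alternative and more self-contained route avoids quoting the census formula: from the decomposition of \refS{sec:mapandquad} (or from Tutte's recursion for quadrangulations with a distinguished boundary) one derives a polynomial equation satisfied by $M(z)$, substitutes $M=\psi_M(L_M)$, uses $L_M(1-3L_M)=z$ to eliminate $z$, and checks the resulting polynomial identity in $L_M$. This trades the binomial bookkeeping for a routine algebraic elimination, and it is this second, essentially automatic, route that the sentence preceding the lemma has in mind; I would likely present the Lagrange-inversion proof as the primary argument since it is shortest, and only remark on the algebraic alternative.
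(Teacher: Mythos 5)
Your proof is correct, but it takes a genuinely different route from the paper's. The paper invokes Tutte's quadratic method: starting from a combinatorial decomposition (equivalently, a catalytic-variable functional equation for quadrangulations with a distinguished boundary), one \emph{derives} the polynomial equation satisfied by $M(z)$ and \emph{solves} it by rational parameterization; the paper defers the details to \citep[Proposition 1]{BFSS}, noting only that the parameterization here is that of \cite{GJ} rather than Tutte's original, the two being birationally equivalent. You instead take Tutte's census formula $|\cQ_{n+2}|=\frac{2\cdot3^n}{(n+1)(n+2)}\binom{2n}{n}$ as given, apply Lagrange inversion to $\psi_M(L_M(z))$, and check term-by-term that the two expansions agree; the binomial bookkeeping you carry out is correct (I verified $\psi_M'(t)=\frac{2-12t}{(1-3t)^3}$ and the telescoping of $2\cdot3^{n-1}\binom{2n+1}{n-1}-12\cdot3^{n-2}\binom{2n}{n-2}$ to $\frac{2\cdot3^n\,n}{(n+1)(n+2)}\binom{2n}{n}$). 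The trade-off is transparent: your argument is short, self-verifying, and avoids the quadratic method entirely, but it is a \emph{verification} rather than a \emph{derivation} --- it requires already knowing the closed-form count, whereas the quadratic method produces both the parameterization and the count from the functional equation. One small quibble: the sentence preceding the lemma is not specifically alluding to your ``second route'' (substitute the parameterization into a polynomial equation and eliminate); the paper explicitly names the quadratic method, which is adjacent but not identical --- it is the mechanism that \emph{produces} that polynomial equation in the first place.
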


Algebraic functions with such parameterization are called {\em Lagrangean}. The proof is a textbook application of Tutte's so-called {\em quadratic method}. 
This parameterization is the one used by \citet{GJ}. It differs slightly from the original parameterization given by \citet{T} and used in \cite{BFSS}, but the two are related by a birational transformation. The derivation of \refL{lagrangean} is quite the same as that given in \citep[Proposition 1]{BFSS}, and we refer readers to that work for the idea of the proof. (Also, in \cite{BFSS} the parameterization is stated for the generating function of general maps but this is equivalent, using Tutte's angular bijection, to quadrangulations. See also \citet[Section 2.9]{GJ} for a detailed explanation of the quadratic method for map enumeration.) One may inspect the Taylor expansion of $\psi_M(L_M(z))$ at $z=0$ to conclude that this parametrization gives $M(z)=2 z+9 z^2+54 z^3+378 z^4+O\left(z^5\right)$.
\begin{cor}[\citet{T}]\label{Mexpand}
 \begin{equation}\label{Lsing}
 L_M(z) = \frac 16 - \frac 16 (1- 12z )^{1/2} ~,
\end{equation}
\begin{equation}\label{Msing}
M(z)=\frac{1}{3}-\frac{4}{3} (1- 12z)+\frac{8}{3} (1-12z)^{3/2}+ O\left((1-12z)^2\right)~.
\end{equation}
\end{cor}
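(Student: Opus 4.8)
The plan is to solve the implicit equation of \refL{lagrangean} in closed form and then substitute into $\psi_M$. First, from $L_M(z) = z\phi_M(L_M(z)) = z/(1-3L_M(z))$ we get $L_M(z)(1-3L_M(z)) = z$, i.e.\ $3L_M(z)^2 - L_M(z) + z = 0$, whence $L_M(z) = \tfrac16\bigl(1 \pm (1-12z)^{1/2}\bigr)$. Since $L_M$ is a power series with $L_M(0)=0$ (it counts a combinatorial class via the Lagrange inversion behind \refL{lagrangean}), we must take the minus sign, which is exactly (\ref{Lsing}). In particular $z \mapsto (1-12z)^{1/2}$ is analytic on $\C \setminus [1/12,\infty)$, so $L_M$ is analytic on a slit disc $\{|z| < 1/12 + \eps,\ z \notin [1/12, 1/12+\eps)\}$ with a square-root branch point at $z = 1/12$; hence $L_M$, and therefore $M = \psi_M(L_M)$, has radius of convergence $1/12$.

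Next I would simplify $\psi_M(L_M(z))$. Writing $u = (1-12z)^{1/2}$, so that $L_M = (1-u)/6$, one computes $1 - 3L_M = (1+u)/2$ and $2 - 9L_M = (1+3u)/2$, giving
\[
M(z) = \psi_M(L_M(z)) = \frac{L_M(2-9L_M)}{(1-3L_M)^2} = \frac{\tfrac{1-u}{6}\cdot\tfrac{1+3u}{2}}{\bigl(\tfrac{1+u}{2}\bigr)^2} = \frac{(1-u)(1+3u)}{3(1+u)^2} = \frac{1 + 2u - 3u^2}{3(1+u)^2}\, .
\]
Finally, expanding about the singularity means expanding about $u = 0$: using $(1+u)^{-2} = 1 - 2u + 3u^2 - 4u^3 + O(u^4)$ and multiplying out, the linear term cancels and $(1 + 2u - 3u^2)(1+u)^{-2} = 1 - 4u^2 + 8u^3 + O(u^4)$. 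Hence $M(z) = \tfrac13 - \tfrac43 u^2 + \tfrac83 u^3 + O(u^4)$, and substituting $u^2 = 1-12z$, $u^3 = (1-12z)^{3/2}$, $u^4 = (1-12z)^2$ yields (\ref{Msing}).

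I do not anticipate a real obstacle: once \refL{lagrangean} is granted, the argument is a routine closed-form solution of a quadratic followed by a Taylor expansion. The only points meriting a sentence are the branch selection (forced by $L_M(0)=0$) and the observation that the expansion is valid in the $\Delta$-domain required by the definition of "singular with exponent $3/2$", which is immediate since the only singularity of $(1-12z)^{1/2}$ in a neighbourhood of $1/12$ lies on the ray $[1/12,\infty)$. As a byproduct, (\ref{Msing}) shows $M$ is singular with exponent $3/2$ with $a_M = 1/3$, $b_M = 4/3$, $c_M = 8/3$, which is what the subsequent coefficient-extraction arguments require.
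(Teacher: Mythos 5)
Your proof is correct and takes essentially the same route as the paper: both obtain a closed form for $L_M$ (the paper cites Lagrange inversion, you solve the quadratic $3L_M^2 - L_M + z = 0$ directly, which gives the same thing), substitute into $\psi_M$, and expand around the singularity at $z = 1/12$. Your intermediate closed form $M = (1+2u-3u^2)/(3(1+u)^2)$ with $u=(1-12z)^{1/2}$ matches the paper's $M = -1 + \tfrac{1}{54z^2}\bigl(-(1-18z)+(1-12z)^{3/2}\bigr)$ after factoring $2u^3-3u^2+1 = (1-u)^2(2u+1)$, so the two presentations agree.
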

In particular, $M(z)$ is singular with exponent $3/2$.
\begin{proof}
Using \refL{lagrangean}, Lagrange inversion yields the explicit formulas 
\[
L_M(z) = \frac 16 - \frac 16 (1- 12z )^{1/2} ~,\quad M(z) = -1 + \frac{1}{54z^2}(-(1-18z)+(1-12z)^{3/2})\, .
\]
Writing $y=1-12z$, the asymptotic expansion for $M$ follows easily by rearrangement. 
\end{proof}
Implicit functional equations can be used to derive asymptotic expansions in great generality, even when no closed form is available, and we exploit this machinery in the current paper. We now sketch how the method is applied in our setting in slightly more detail, referring the reader to \cite{BFSS} and \citep[VII.8]{FS} for a full exposition. 

Suppose we are given $y$ defined by an implicit formula $y(z)=z\phi(y(z))$, where $\phi$ is analytic, non-zero at $0$, has non-negative Taylor coefficients, and has $\lim_{x \to r_y} x\phi'(x)/\phi(x) > 1$, where $r_y \in (0,\infty]$ is the radius of convergence of $\phi$. (In our case, $\phi$ will always in fact be a {\em rational} function satisfying the preceding conditions.) Then, using Lagrange inversion, one obtains an asymptotic expansion of $y$ around its dominant singularity (see \citep[Section VI.7]{FS}). Given another function $m$ expressible as $m(z)=\psi(y(z))$ where $\psi$ is a rational function whose radius of convergence is at least as large as that of $y$, this yields an asymptotic expansion for $m$ as follows.

First, we locate the radius of convergence for $y$. By \citep[Theorem VI.6]{FS}, we can expand $y(z)$ as
\begin{equation}\label{Lsymbolic}
y(z) = \tau - l_{1/2} \left(1- z/r_{y}\right)^{1/2} +l_{1} \left(1- z/r_{y}\right)+ O\left(\left(1- z/r_{y}\right)^{3/2}\right),
\end{equation}
where the coefficients $l_{i/2}$ are to be determined for $i\in\N$, and $r_{y}$ and $\tau$ are determined by the equations
\[
\tau \phi'(\tau) - \phi(\tau) = 0,~r_{y} = \frac {\tau} {\phi(\tau)}~.
\]
To determine $l_{1/2}$ and $l_1$, let $h(t) = r_{y} - \frac {t} {\phi(t)}$. Then $h(\tau) = 0=h'(\tau)$, so expanding $h(t)$ around $\tau$ yields
\begin{align*}
& 1 - z/r_{y} \\
= &~\frac { h(t)}{r_{y}}\\
=&~ \frac {h''(\tau)} {2r_{y}} \left(t-\tau\right)^2+ \frac {h'''(\tau)} {6 r_{y}} \left(t-\tau\right)^3 + O\left( (t - \tau)^4 \right)\\
=&~ \frac {h''(\tau)} {2r_{y}} \left[- l_{1/2} \left(1-  z/r_{y}\right)^{1/2} +l_{1} \left(1- z/r_{y}\right) + O\left(\left(1- z/r_{y}\right)^{3/2}\right) \right]^2\\
&\instep + \frac {h'''(\tau)} {6 r_{y}}  \left[- l_{1/2} \left(1-  z/r_{y}\right)^{1/2} +l_{1} \left(1- z/r_{y}\right) + O\left(\left(1- z/r_{y}\right)^{3/2}\right) \right]^3  + O\left( (t - \tau)^4 \right)\\
=&~ \frac {h''(\tau)} {2r_{y}} l_{1/2}^2 \left(1-  z/r_{y}\right) - \left( 2 \frac {h''(\tau)} {2r_{y}} l_{1/2} l_{1} + \frac {h'''(\tau)} {6 r_{y}}  l_{1/2}^3 \right) \left( 1- z/r_{y}\right)^{3/2} + O\left( \left( 1- z/r_{y}\right)^{2} \right)~.
\end{align*}
By comparing the coefficients of the terms $(1- z/r_{y})$ we obtain 
 \begin{equation}\label{l1}
 l_{1/2} = \left(\frac {2r_{y}} {h''(\tau)}\right)^{1/2}= \left(\frac{2\phi(\tau)}{\phi''(\tau)}\right)^{1/2}\, ,
 \end{equation}
and by comparing the coefficients of the terms $(1- z/r_{y})^{3/2}$ we have
 \begin{equation}\label{l2}
 l_{1} = - \frac {h'''(\tau) l_{1/2}^3} {6 h''(\tau) l_{1/2} }\, .
 \end{equation}

Now we use the expansion (\ref{Lsymbolic}) to derive an expansion for $m(z)$ around its dominant singularity $r_m$. First, the equation $m(z) = \psi(y(z))$ and the assumption that $r_\phi \ge r_y$ together imply that $r_m=r_{y}$. In the current work, we always have that $\psi'(\tau)=0$ (indeed, this seems to generally be the case in compositional schemata involving maps); together with (\ref{Lsymbolic}), a Taylor expansion of $\psi$ around $\tau$ then yields
\begin{align*}
m(z) =&~ \psi(y(z)) \\
=&~ \psi\left( \tau -  l_{1/2} \left(1- z/r_{y} \right)^{1/2} + l_{1} (1-z/r_{y}) + O\left((1-z/r_{y})^{3/2}\right) \right) \\
=&~ \psi\left( \tau \right) + \frac 12  \psi''\left(\tau\right)\left[-  l_{1/2} \left(1- z/r_{y} \right)^{1/2} + l_{1} (1-z/r_{y}) + O\left((1-z/r_{y})^{3/2}\right) \right]^2\\
&\instep + \frac 1 6 \psi'''\left(\tau \right)\left[- l_{1/2} \left(1- z/r_{y}\right)^{1/2}  + l_{1} (1-z/r_{y}) + O\left((1-z/r_{y})^{3/2}\right) \right]^3\\
&\instep +O\left(\left(1- z/r_{y}\right)^2\right)\\
=&~\psi\left( \tau \right)  + \frac 12  \psi''\left(\tau\right) l_{1/2}^2  \left(1- z/r_{y} \right)
- \left(  \psi''\left(\tau\right) l_{1/2} l_{1} + \frac 1 6 \psi'''\left(\tau \right) l_{1/2}^3 \right)  \left(1- z/r_{y} \right)^{3/2} \\
& \instep + O\left( \left(1- z/r_{y} \right)^2\right)~.
\end{align*}
We remark that the vanishing term $\psi'\left( \tau \right)=0$ accounts for the shift of the singular exponent to $3 /2$.

Using the compositional relation given in \refL{H} together with the expansion of $L_M(z)$ given in \refC{Mexpand}, we obtain that $H(z)$ is also Lagrangean. Expanding $H(z)$ at its radius of convergence verifies the correctness of the first line of Table~\ref{table2}. We obtain expansions for $C(z),U(z)$, and $B(z)$, and thereby complete the proof of Lemma~\ref{expansions}, in a similar manner; all this is formalized in the following lemma. 

\begin{lem}\label{expansions1}
Table~\ref{table1} gives Lagrangean parameterizations for $M(z),H(z),C(z),U(z),B(z)$.
\end{lem}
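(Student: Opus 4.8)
The plan is to carry the Lagrangean parameterization of $M$ from \refL{lagrangean} through the four substitution relations \eqref{Hdfn}, \eqref{Udfn}, \eqref{MCH2}, \eqref{CBU}, alternately substituting directly and inverting compositions. Throughout write $L=L_M(z)$, so that $z = L/\phi_M(L) = L(1-3L)$ and $M(z)=\psi_M(L)$. A short computation from $\psi_M(t)=t(2-9t)/(1-3t)^2$ gives $1+M(z) = (1-4L)/(1-3L)^2$ and hence $1-2z(1+M(z)) = (1-5L+8L^2)/(1-3L)$; substituting into \eqref{Hdfn} and using $z=L(1-3L)$ yields $H(z) = L(1-3L)^3/(1-5L+8L^2)^2$. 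Thus $H$ is Lagrangean with the same parameter, i.e.\ $L_H:=L_M$, $\phi_H:=\phi_M$, and $\psi_H(t):= t(1-3t)^3/(1-5t+8t^2)^2$.

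Next, rewrite \eqref{MCH2} as $C(H(z)) = M(z)-2z(1+M(z))^2$; substituting the expressions above, the right-hand side collapses (using $(2-9t)(1-3t)-2(1-4t)^2 = t-5t^2$) to $\chi(L)$ with $\chi(t):=t^2(1-5t)/(1-3t)^3$. Since $\psi_H(t)=t+O(t^2)$ it is analytically invertible near $0$; equivalently $w=\psi_H(t)$ rearranges to $t = w\,(1-5t+8t^2)^2/(1-3t)^3$, which defines an analytic $L_C(w)=t$ satisfying $L_C = w\,\phi_C(L_C)$ with $\phi_C(t):=(1-5t+8t^2)^2/(1-3t)^3$ (note $\phi_C(0)=1$). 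Since $L_C(H(z))=L_M(z)$ and $H$ is a local analytic bijection fixing $0$, the identity $C(H(z)) = \chi(L_M(z)) = \chi\bigl(L_C(H(z))\bigr)$ gives $C(w)=\psi_C(L_C(w))$ with $\psi_C:=\chi$.

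Substituting $C(z)=\psi_C(L_C(z))$ into \eqref{Udfn}, using $z = L_C(z)/\phi_C(L_C(z))$ together with the factorization $1+\psi_C(t) = (1-4t)(1-5t+8t^2)/(1-3t)^3$, gives $U(z)=\psi_U(L_C(z))$ with $\psi_U(t):=t(1-4t)^2/(1-3t)^3$; so $U$ is Lagrangean with $L_U:=L_C$, $\phi_U:=\phi_C$. Finally \eqref{CBU} gives $B(U(z)) = C(z)/(1+C(z)) = t^2(1-5t)/\bigl((1-4t)(1-5t+8t^2)\bigr)$ with $t=L_C(z)$; inverting $w=U(z)=\psi_U(L_C(z))$ (legitimate as $\psi_U(t)=t+O(t^2)$) produces $L_B(w)$ with $L_B = w\,\phi_B(L_B)$, $\phi_B(t):=(1-3t)^3/(1-4t)^2$, and then $B(w)=\psi_B(L_B(w))$ with $\psi_B(t):=t^2(1-5t)/\bigl((1-4t)(1-5t+8t^2)\bigr)$. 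One then checks these five pairs $(\phi_f,\psi_f)$ are exactly the entries of Table~\ref{table1}; and once \refL{expansions1} is in hand, the radii of convergence and singular expansions in Table~\ref{table2} (i.e.\ \refL{expansions}) follow by the procedure recalled above, solving $\tau\phi_f'(\tau)=\phi_f(\tau)$ in each case.

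The main obstacle is the pair of ``backwards'' steps — reading $C(H(z))=\chi(L_M(z))$ and $B(U(z))=\psi_B(L_C(z))$ as \emph{definitions} of the new Lagrangean parameters $L_C$ and $L_B$. This is valid precisely because $H$ and $U$ each vanish to exactly first order at $z=0$ (immediate from \eqref{Hdfn} and \eqref{Udfn}), so that the local inverses of $\psi_H$ and $\psi_U$ exist and are again of Lagrangean shape $t=w\,\phi(t)$ with $\phi$ rational and $\phi(0)\ne 0$. Everything else is mechanical rational-function algebra.
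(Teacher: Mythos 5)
Your proof is correct and takes essentially the same approach as the paper: compute $H(z)$ (and then $C,U,B$) in terms of the Lagrangean parameter by pushing the existing parameterization through the substitution relations, inverting $\psi_H$ and $\psi_U$ to obtain the new parameters $L_C$ and $L_B$. The only cosmetic difference is that you use the rearranged form (\ref{MCH2}) rather than (\ref{MCH}) for the $C$ step; the algebra and the conclusion are identical.
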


\begin{proof}
Let $H(z)$ be defined as in \refP{schema1}, and let $\psi_M(t)$ be given in \refL{lagrangean}. Write $t= L_M(z)$, then
\begin{align*}
H(z)=&~ z \left(\frac 1 {1-2 z(1+ M(z)) }\right)^2
=~z \left(\frac 1 {1- 2z\left(1+ \psi_M(L_M(z))\right) }\right)^2
=-\frac{t (-1 + 3 t)^3}{(1 - 5 t + 8 t^2)^2}~;
\end{align*}
this proves the first assertion. Then taking $\psi_H(t)$ as given by Table~\ref{table1} yields $H(z) = \psi_H(L_M(z))$.

The remaining parameterizations of Table~\ref{table1} are established similarly, using (\ref{MCH}) for $C(z)$, and (\ref{CBU}) for $U(z)$ and $B(z)$. The radius of convergence and expansions around the radius in Table~\ref{table2} are derived using Lagrange inversion as in \refC{Mexpand}.\end{proof}

\begin{center}
\begin{table}[htb]
\renewcommand{\arraystretch}{2}
  \begin{tabular}{| c | c  | c |}
    \hline
    $f$ & $\phi_f$ & $\psi_f$ \\ \hline
    $H$ & $\frac 1 {1-3t}$ & $-\frac{t (-1 + 3 t)^3}{(1 - 5 t + 8 t^2)^2}$ \\ \hline
    $C$ &  $-\frac{(1 - 5 t + 8 t^2)^2}{(-1 + 3 t)^3}$ & $\frac{t^2 (-1+5t)}{(-1+3t)^3}$ \\ \hline
    $U$ & $-\frac{(1 - 5 t + 8 t^2)^2}{(-1 + 3 t)^3} $ & $-\frac{t (-1 +4t)^2}{(-1+3t)^3}$ \\ \hline
    $B$ & $-\frac{(-1 + 3 t)^3}{(-1 + 4 t)^2}$ & $\frac{t^2 (-1 + 5 t)}{(-1+4t)(1-5t+8t^2)}$ \\
    \hline
  \end{tabular}
  \caption{In this table we always have $L_f(z) = z \phi_f(L_f(z))$, where $f$ is one of the functions $H,C,U$, or $B$.}\label{table1}
\end{table}
\end{center}

\noindent {\bf Remark.} 
One of the fundamental facts of singularity analysis is that the radius of convergence of a generating function determines the exponential growth rate of the associated combinatorial family. Under Tutte's angular bijection (see \cite{T}), $2$-connected and simple quadrangulations respectively correspond to $2$-edge-connected and $2$-connected maps. In view of this, the values $r_{C} = 27/196$ and $r_B = 4/27$ agree with the known exponential growth rates for loopless bridgeless maps \citet[(7)]{WL} and for $2$-connected maps \citep[Table 2]{BFSS} (noting that the coefficients of the expansion for $B(z)$ are slightly different than in \cite{BFSS}, because in that work a single loop is counted as a $2$-connected map). 

\begin{proof}[{\bf Proofs of Propositions~\ref{schema1} and \ref{schema2}}]
We have verified that $M(z)$ and $H(z)$ are singular with exponent $3/2$ in Lemmas~\ref{Mexpand} and \ref{expansions} respectively. The facts that $H(r_H) = r_C$ and that $U(r_U) = r_B$ are immediate from the values and expansions given in Table~\ref{table2}. Thus, $(M,C,H)$ and $(C,B,U)$ are map schemata. The values claimed in (\ref{values1}) and (\ref{values2}) are then derived by routine arithmetic.
\end{proof}

\section*{Acknowledgements}
LAB was supported by an NSERC Discovery grant and an FQRNT New Researcher grant throughout this research, and thanks both institutions for their support. LAB additionally thanks the Simons Foundation and the Leverhulme Trust for their support during part of this research. YW was supported for part of this research by an FQRNT International Internship sponsored by Montr\'eal's Centre de Recherches Math\'ematiques, and thanks both institutions for their support. Both authors thank the University of Oxford and  the Newton Institute, where the final work on this manuscript was completed. Both authors additionally thank two anonymous referees for useful comments and suggestions.

\small 

\normalsize
                 

\begin{thebibliography}{34} 
\providecommand{\natexlab}[1]{#1}
\providecommand{\url}[1]{\texttt{#1}}
\expandafter\ifx\csname urlstyle\endcsname\relax
  \providecommand{\doi}[1]{doi: #1}\else
  \providecommand{\doi}{doi: \begingroup \urlstyle{rm}\Url}\fi
 
\bibitem[Addario-Berry \& Albenque(2013)]{ABA}
Addario-Berry, L., \& Albenque, M. (2013). The scaling limit of random simple triangulations and random simple quadrangulations. \emph{arXiv preprint arXiv:1306.5227.}

\bibitem[Aldous(1985)]{A}
Aldous, D. J. (1985). Exchangeability and related topics (pp. 1-198). Springer Berlin Heidelberg.

\bibitem[Banderier, Flajolet, Schaeffer \& Soria(2001)]{BFSS}
Banderier, C., Flajolet, P., Schaeffer, G., \& Soria, M. (2001). Random maps, coalescing saddles, singularity analysis, and Airy phenomena. Random Structures \& Algorithms, 19(3-4), 194-246.

\bibitem[Bender \& Canfield(1989)]{BC}
Bender, E. A., \& Canfield, E. R. (1989). Face sizes of 3-polytopes. Journal of Combinatorial Theory, Series B, 46(1), 58-65.

\bibitem[Burago, Burago \& Ivanov(2001)]{BBI}
Burago, D., Burago, Y., \& Ivanov, S. (2001). A course in metric geometry (Vol. 33, pp. 371-374). Providence: American Mathematical Society.

\bibitem[Chassaing \& Schaeffer(2004)]{CS}
Chassaing, P., \& Schaeffer, G. (2004). Random planar lattices and integrated superBrownian excursion. Probability Theory and Related Fields, 128(2), 161-212.

\bibitem[Even(2011)]{E}
Even, S. (2011). Graph algorithms. Cambridge University Press.

\bibitem[Flajolet \& Sedgewick(2009)]{FS}
Flajolet, P., \& Sedgewick, R. (2009). \emph{Analytic combinatorics. Cambridge University Press}.

\bibitem[Gao \& Wormald(1999)]{GW}
Gao, Z., \& Wormald, N. C. (1999). The size of the largest components in random maps. SIAM Journal on Discrete Mathematics, 12(2), 217-228.

\bibitem[Goulden \& Jackson(1983)]{GJ}
Goulden, I.P. \& Jackson, D.M. (1983). Combinatorial Enumeration. {\emph John Wiley \& Sons}. 

\bibitem[Le Gall(2013)]{LG}
Le Gall, J. F. (2013). Uniqueness and universality of the Brownian map. The Annals of Probability, 41(4), 2880-2960.

\bibitem[McDiarmid(1998)]{M}
McDiarmid, C. (1998). Concentration. In Probabilistic methods for algorithmic discrete mathematics (pp. 195-248). Springer Berlin Heidelberg.

\bibitem[Miermont(2009)]{Mi09}
Miermont, G. (2009). Tessellations of random maps of arbitrary genus. In Annales Scientifiques de l'\'Ecole Normale Sup\'erieure (Vol. 42, No. 5, pp. 725-781).

\bibitem[Miermont(2013)]{Mi}
Miermont, G. (2013). The Brownian map is the scaling limit of uniform random plane quadrangulations. Acta mathematica, 210(2), 319-401.

\bibitem[Strassen(1965)]{Str}
Strassen, V. (1965). The existence of probability measures with given marginals. Ann. Math. Statist. 36, 423-439. 

\bibitem[Tutte(1963)]{T}
Tutte, W. T. (1963). A census of planar maps. Canad. J. Math, 15(2), 249-271.

\bibitem[Walsh \& Lehman(1975)]{WL}
Walsh, T. R., \& Lehman, A. B. (1975). Counting rooted maps by genus III: Nonseparable maps. Journal of Combinatorial Theory, Series B, 18(3), 222-259.

\end{thebibliography}
\end{document}